\documentclass{conm-p-l}

\usepackage{amssymb, color}
\usepackage[dvipsnames]{xcolor}
\usepackage{mathrsfs}
\usepackage{amsmath} 
\usepackage{amsthm}
\usepackage{url} 
\usepackage{stackrel} 
\usepackage{courier}
\usepackage[all,cmtip]{xy} 
\usepackage{multicol}
\usepackage{mathtools}
\usepackage{verbatim} % \begin{comment} \end{comment}
\usepackage{lipsum}
\usepackage{hyperref}
\usepackage[normalem]{ulem}
\usepackage[makeroom]{cancel}  
\usepackage{enumitem}
%\usepackage{graphicx}  
% Packages just added 
%\usepackage{latexsym}  
%\usepackage[cp1250]{inputenc}
%\usepackage{textcomp} 
%\usepackage{eucal} 
%\usepackage{ifthen} 
%\usepackage{setspace} 

\catcode`~=11 % make LaTeX treat tilde (~) like a normal character
\newcommand{\urltilde}{\kern -.15em\lower .7ex\hbox{~}\kern .04em}  
\catcode`~=13 % revert back to treating tilde (~) as an active character

\newcommand{\thmref}[1]{Theorem~\ref{#1}}

\newcommand{\corref}[1]{Corollary~\ref{#1}}
\newcommand{\secref}[1]{\S\ref{#1}}
\newcommand{\lemref}[1]{Lemma~\ref{#1}}
\newcommand{\eqnref}[1]{~(\ref{#1})}

\newcommand{\Inoindex}{\mathop{\mathrm{I}}\nolimits}

\newcommand{\tr}{\mathop{\mathrm{tr}}\nolimits}

\newcommand{\Aut}{\mathop{\mathrm{Aut}}\nolimits}

\makeatletter
\newif\if@borderstar
   \def\bordermatrix{\@ifnextchar*{%
       \@borderstartrue\@bordermatrix@i}{\@borderstarfalse\@bordermatrix@i*}%
   }
   \def\@bordermatrix@i*{\@ifnextchar[{\@bordermatrix@ii}{\@bordermatrix@ii[()]}}
   \def\@bordermatrix@ii[#1]#2{%
   \begingroup
     \m@th\@tempdima8.75\p@\setbox\z@\vbox{%
       \def\cr{\crcr\noalign{\kern 2\p@\global\let\cr\endline }}%
       \ialign {$##$\hfil\kern 2\p@\kern\@tempdima & \thinspace %
       \hfil $##$\hfil && \quad\hfil $##$\hfil\crcr\omit\strut %
       \hfil\crcr\noalign{\kern -\baselineskip}#2\crcr\omit %
       \strut\cr}}%
     \setbox\tw@\vbox{\unvcopy\z@\global\setbox\@ne\lastbox}%
     \setbox\tw@\hbox{\unhbox\@ne\unskip\global\setbox\@ne\lastbox}%
     \setbox\tw@\hbox{%
       $\kern\wd\@ne\kern -\@tempdima\left\@firstoftwo#1%
         \if@borderstar\kern2pt\else\kern -\wd\@ne\fi%
       \global\setbox\@ne\vbox{\box\@ne\if@borderstar\else\kern 2\p@\fi}%
       \vcenter{\if@borderstar\else\kern -\ht\@ne\fi%
         \unvbox\z@\kern-\if@borderstar2\fi\baselineskip}%
         \if@borderstar\kern-2\@tempdima\kern2\p@\else\,\fi\right\@secondoftwo#1 $%
     }\null \;\vbox{\kern\ht\@ne\box\tw@}%
   \endgroup
   }
\makeatother

\textwidth=6.25in \textheight=9in \topmargin=-.5in
\oddsidemargin=-.1in \evensidemargin=-.1in
%\linespread{0.9}  
\addtolength{\abovedisplayskip}{-5mm}
\addtolength{\belowdisplayskip}{-5mm}

\newtheorem{theorem}{Theorem}[section] 
\newtheorem{proposition}[theorem]{Proposition}
\newtheorem{lemma}[theorem]{Lemma}

\newtheorem{example}[theorem]{Example}

\newtheorem{remark}[theorem]{Remark}

\newtheorem{corollary}[theorem]{Corollary}

\allowdisplaybreaks 

\setcounter{MaxMatrixCols}{25}

\begin{document}

\title[The center of hyperelliptic Krichever-Novikov algebras]{On the module structure of the center of hyperelliptic Krichever-Novikov algebras} 
\author{Ben Cox}
\address{Department of Mathematics, College of Charleston, Charleston, SC 29424 USA}
\email{coxbl@cofc.edu} 
\thanks{The first author is partially supported by a collaboration grant from the Simons Foundation \#319261.
}

\author{Mee Seong Im}
\address{Department of Mathematical Sciences, United States Military Academy, West Point, NY 10996 USA}
\email{meeseongim@gmail.com}
\thanks{The second author acknowledges the College of Charleston for providing a productive working environment in May of 2016. 
The second author is partially supported by B. Cox's Simons Collaboration Grant \#319261 and the Department of Mathematical Sciences at the United States Military Academy.
}

\subjclass[2010]{Primary 22E60, 22E66, 	22E99, 	16W25, 16W20}

\keywords{Hyperelliptic Krichever-Novikov algebras, universal central extensions, K$\ddot{a}$hler differentials, current algebras, Riemann surfaces with punctures}
  
\date{\today}

\begin{abstract}
We consider the coordinate ring  of a hyperelliptic curve and let $\mathfrak{g}\otimes R$ be the corresponding current Lie algebra where $\mathfrak g$ is a finite dimensional simple Lie algebra defined over $\mathbb C$. 
We give a generator and relations description of the universal central extension of $\mathfrak{g}\otimes R$ in terms of certain families of polynomials $P_{k,i}$ and $Q_{k,i}$ and describe how the center $\Omega_R/dR$ decomposes into a direct sum
of irreducible representations when the automorphism group is $C_{2k}$ or $D_{2k}$. 
\end{abstract}

\maketitle 

\bibliographystyle{amsalpha} 

\setcounter{tocdepth}{0} 

%\tableofcontents 

% Check the coefficients: the roots \alpha_i  versus the poly coeff a_i's.  

\section{Introduction}\label{section:intro} 
In \cite{cox-center-genus-zero-KN-algebras}, the author describes the action of the automorphism group of the ring $R=\mathbb{C}[t,(t-a_1)^{-1},\ldots, (t-a_{n})^{-1}]$ on the center of the current Krichever-Novikov algebra whose coordinate ring is $R$, where $a_1,\ldots, a_{n}$ are pairwise distinct complex numbers (see also \cite{MR3211093}).  In that setting, the five Kleinian groups $C_n$, $D_n$, $A_4$, $S_4$ and $A_5$  appear as automorphism groups of $R$ for particular choices of $a_1,\dots, a_n$. 
These five groups naturally appear in the McKay correspondence, which ties together the representation theory of finite subgroups $G$ of $SL_n(\mathbb{C})$ to the resolution of singularities of quotient orbifolds $\mathbb{C}^n/G$.

It is known that $\ell$-adic cohomology groups tend to be acted on by Galois groups, and the way in which these cohomology groups decompose can give interesting and important number theoretic
information (see for example R. Taylor's review of Tate's conjecture \cite{MR2060030}). Moreover it is an interesting and very difficult problem to describe the group $\text{Aut}(R)$ where $R$ is the space of meromorphic functions on a compact Riemann surface $X$ and to determine the module structure of its induced action on the module of holomorphic differentials $\mathcal H^1(X)$ (see \cite{MR1796706}). Now if one realizes the fact that  the cyclic homology group $HC_1(R)=\Omega^1_R/dR$ can be identified with the $H_2(\mathfrak{sl}(R),\mathbb C)$ which gives the space of $2$-cocycles (see \cite{MR618298}), it is natural to ask how $\Omega^1_R/dR$ decomposes into a direct sum of irreducible modules under the action of the $\text{Aut}(R)$.

%A description of how the center of the universal central extension of the genus zero Krichever-Novikov algebra decomposes under the action of the above automorphism groups into a direct sum of irreducible representations was studied  in \cite{cox-center-genus-zero-KN-algebras}. 
%
One of our main results includes Theorem~\ref{mainresult}, where we describe the universal central extension of the hyperelliptic Lie algebra as a $\mathbb{Z}_2$-graded Lie algebra. 
In this theorem we give a description of the bracket of two basis elements in the universal central extension of $\mathfrak g\otimes R$ in terms of polynomials $P_{k,i}$ and $Q_{k,i}$ defined recursively
\begin{equation} 
(2k+r+3)P_{k,i}=-\sum_{j= 1 }^{r} (3j+2k-2r)a_jP_{k-r+j-1,i}
\end{equation}
for $k\geq 0$ with the initial condition
$P_{l,i}=\delta_{l,i}$, $-r \leq i,l\leq -1$ 
and 
\begin{equation} 
 ( 2m-3 )a_1Q_{m,i}  = \sum_{j=2}^{r+1 } ( 3j -2m)a_j Q_{ m - j+ 1 ,i}   
\end{equation}
with initial condition $Q_{m,i}=\delta_{m,-i}$ for $1\leq m\leq r$ and $-r\leq i\leq -1$.  In this paper $\mathfrak g$ is assumed throughout to be a finite dimensional simple Lie algebra defined over the complex numbers. 
The generating series for these polynomials can be written in terms of hyperelliptic integrals \eqnref{hyperellpiticintegral1} and \eqnref{hyperellpiticintegral2} using Bell polynomials and Fa\'a de Bruno's formula (see \secref{FaadeBruno'sFormulaandBellPolynomials}).   One can compare this result to that given in \cite{MR3487217} and also in \cite{coxzhao-2016}.

We also describe in this paper (see \thmref{theorem-result-001}) how K\"ahler differentials modulo exact forms $\Omega_R/dR$ decompose under the action of the automorphism group of the coordinate ring 
% The general setting as R = R_m(p) = \mathbb{C}[t^{\pm 1}, u : u^m = p(t)] 
$R := R_{2}(p)=\mathbb{C}[t^{\pm 1}, u : u^2 = p(t)]$ , where $p(t)=t(t-\alpha_1)\cdots (t-\alpha_{2n})=\sum_{i=1}^{2n+1}a_it^i$, with the $\alpha_i$ being pairwise distinct roots.  In this setting, we first observe that we have the following result due to M. Bremner (see \cite{MR1303073})
\begin{equation}\label{omegaRmoddR}
 \Omega_R/dR= \bigoplus_{i=0}^{2n} \mathbb{C}\omega_i,
\end{equation}
where $\omega_0=\overline{t^{-1}\,dt}$, $\omega_{i}=\overline{t^{-i}u\,dt}$ for $i=1,\dots,2n$. 

 The possible automorphism groups for the hyperelliptic curve 
$$
R =\mathbb{C}[t^{\pm 1}, u : u^2 = t(t-\alpha_1)\cdots (t-\alpha_{2n})] 
$$
are the groups $C_{2k}$, $D_{2k}$ or one of the groups
\begin{align*}
\mathbb V_{2k}&:=\langle x,y : x^4,y^{2k},(xy)^2,(x^{-1}y)^2\rangle, \\
\mathbb U_k&:=\langle x,y: x^2,y^{2n},xyxy^{k+1} \rangle\\
Dic_k&:=\langle a,x : a^{2k}=1,\enspace x^2=a^k,\enspace x^{-1}ax=a^{-1}\rangle
\end{align*}
(see \thmref{cor:group-actions} below, \cite[ Corollary 15]{MR3631928}, \cite{MR1223022} and \cite{MR2035219}).

The above polynomials $P_{k,i}$ help us to describe how the center decomposes under the group of automorphisms of $R$.   The automorphism group of $R$ has a canonical action on $\Omega_R/dR$ and so it is natural to ask how this representation decomposes into a direct sum of irreducible representations.  When the automorphism group is $C_{2k}$ we can rewrite \eqnref{omegaRmoddR} as a direct sum of $1$-dimensional irreducible $C_{2k}$-representations. More precisely the center decomposes as: 
\begin{equation}
\Omega_R/dR\cong U_0\oplus \ldots \oplus U_{k-1}, 
 \end{equation}  
 where $\displaystyle{U_r=\bigoplus_{i\equiv r\mod k,1\leq i\leq 2n}\mathbb C\omega_i}$ for $r=1,\ldots, k-1$ is a sum of one-dimensional irreducible representation of $C_{2k}$ with character $\chi_r(s)=\exp(2\pi\imath rs/2k)$, each occurring with multiplicity $l$ and  
 $$
 U_0=\mathbb C\omega_0\oplus \bigoplus_{i=1}^l \mathbb C\omega_{ki}. 
 $$

If the automorphism group is $D_{2k}$ (with a certain parameter $c^{2n}=a_1$ and $k|2n$)
% MSI:  I added the condition   $k|n$  to $n$  even.  
the center decomposes under the action of $D_{2k}$ as
\begin{equation}
\Omega_R/dR \cong \mathbb{C}\omega_0 
\oplus \bigoplus_{i=3}^{4} U_i^{\oplus \Upsilon_i(\epsilon_i, \nu_i)} 
\oplus 
\bigoplus_{h=1}^{k-1} V_h^{\oplus \frac{(1-(-1)^h)n}{k}}
\end{equation}
where 
\begin{align*} 
\Upsilon_i(\epsilon_i,\nu_i) 
%&= \frac{(1-(-1)^k)n}{2k}(\delta_{i,3}+\delta_{i,4})+\frac{\epsilon_i }{4}   \displaystyle{\sum_{i=n+3}^{2n}}
%c^{n+3-2i}
%  P_{i-n-3,-i} + \frac{\nu_i}{4}\displaystyle{\sum_{i=n+3}^{2n}}
%\zeta^{\frac{2n+3-2i}{2}}
%c^{n+3-2i}
%  P_{i-n-3,-i} \\ 
  &= \frac{(1-(-1)^k)n}{2k}(\delta_{i,3}+\delta_{i,4})+(-1)^i\frac{1-(-1)^n}{4} +\frac{1 }{2} (-1)^i  \displaystyle{\sum_{i=n+3}^{2n}}
c^{n+3-2i}
  P_{i-n-3,-i}.
\end{align*} 
and
 $U_i$, $i=1,2,3,4$, are the irreducible one dimensional representations for $D_{2k}$ with character $\rho_i$ and $V_h$ are the irreducible 2-dimensional representations for $D_{2k}$ with character $\chi_h$, $1\leq h\leq k-1$ (see \thmref{theorem-result-001} below).  Note $\mathbb C\omega_0$ and $U_1$ are the trivial representations.

We use classical representation theory techniques found for example in  \cite{MR0450380} by Serre and \cite{MR1153249} by Fulton and Harris to prove our results. 
 
The remaining cases where the automorphism group is  $D_{2k}$ when $c^{2n}=-a_1$, $\mathbb V_{2k}$, $\mathbb U_k$ or $\text{Dic}_k$ will be studied in a future publication.

 The authors would like to thank Xiangqian Guo and Kaiming Zhao for useful discussions and pointing some corrections. \section{Background} 

\subsection{Universal Central Extensions}
An {\em extension} of a Lie algebra $\mathfrak{g}$ is a short exact sequence of Lie algebras 
\begin{equation}
\xymatrix@-1pc{
0 \ar[rr] & & \mathfrak{k} \ar[rr]^f & & \mathfrak{g}' \ar[rr]^{g} & & \mathfrak{g} \ar[rr] & & 0. 
}
\end{equation}
A homomorphism from one extension $\mathfrak{g}'\stackrel{g}{\longrightarrow} \mathfrak{g}$ 
to another extension $\mathfrak{g}'' \stackrel{g'}{\longrightarrow}\mathfrak{g}$ 
is a Lie algebra homomorphism $\mathfrak{g}'\stackrel{h}{\longrightarrow}\mathfrak{g}''$ such that 
$g'\circ h = g$. 
A central extension $\widehat{\mathfrak{g}}\stackrel{u}{\longrightarrow}\mathfrak{g}$ 
is a universal central extension if there is a unique homomorphism from 
$\widehat{\mathfrak{g}}\stackrel{u}{\longrightarrow}\mathfrak{g}$
to any other central extension $\mathfrak{g}'\stackrel{g}{\longrightarrow}\mathfrak{g}$. 

Now let $R$ be a commutative ring over $\mathbb{C}$ and let $\mathfrak{g}$ be a finite-dimensional simple Lie algebra over $\mathbb{C}$. 
Let $F=R\otimes R$ be the left $R$-module with the action $a(b\otimes c) = ab \otimes c$, where $a,b,c \in R$. 
Let $K$ be the submodule of $F$ generated by elements of the form $1\otimes ab -a\otimes b - b \otimes a$. 
Then $\Omega_R^1 = F/K$ is the module of K\"ahler differentials. 
The canonical map $d:R\rightarrow \Omega_R$ sends $d a = 1\otimes a + K$, so we will write 
$c \: da := c\otimes a + K$. 
Exact differentials consist of elements in the subspace $dR$ 
and we write $\overline{c\: da}$ as the coset of $c \: da$ modulo $dR$. 
It is a classical result by C. Kassel (1984) that the universal central extension of the current algebra $\mathfrak{g}\otimes R$ is the vector space $\widehat{\mathfrak{g}} = (\mathfrak{g}\otimes R)\oplus \Omega_R/dR$, with the Lie bracket: 
\begin{equation} 
[x\otimes a, y\otimes b] = [x,y] \otimes a b + (x,y) \overline{a\: db}, \hspace{4mm} 
[x\otimes a, \omega] = 0, \hspace{4mm} 
[\omega, \omega'] = 0, \hspace{4mm} 
\end{equation}
where $x,y\in \mathfrak{g}$, $a,b\in R$, $\omega, \omega' \in \Omega_R/dR$, and $(\cdot ,\cdot )$ is the Killing form on 
$\mathfrak{g}$. 
Since the center of the universal central extension is defined to be  $Z(\widehat{\mathfrak{g}})\subseteq \ker(\widehat{\mathfrak{g}}\rightarrow \mathfrak{g}\otimes R)$, 
Kassel showed that $Z(\widehat{\mathfrak{g}})$ is precisely $\Omega_R/dR$.  
In this paper, we will fix $R = R_2(p):=\mathbb C[t^{\pm 1}, u : u^2=p(t)]$, where  
 $p(t)=t(t-\alpha_1)\cdots (t-\alpha_{2n}) \in \mathbb{C}[t]$ and $\alpha_i$'s are pairwise distinct roots.

\subsection{Lie Algebra $2$-Cocycles}
Given a Lie algebra $\mathfrak{g}$ over $\mathbb{C}$, 
a {\em Lie algebra $2$-cocycle} for $\mathfrak{g}$ is a bilinear map $\psi:\mathfrak{g}\times \mathfrak{g}\rightarrow \mathbb{C}$ satisfying:  
\begin{enumerate}
\item $\psi(x,y) = -\psi(y,x)$ for $x,y \in \mathfrak{g}$, and 
\item $\psi([x,y],z) + \psi([y,z],x)+ \psi([z,x],y)=0$ for $x,y,z \in \mathfrak{g}$. 
\end{enumerate}
In particular, 
$\psi: (\mathfrak{g}\otimes R) \times (\mathfrak{g}\otimes R) \rightarrow \mathbb{C}$ is given by 
\begin{equation}
\psi(x\otimes a, y\otimes b) = (x,y) \overline{a \: db},
\end{equation}
which is a $2$-cocycle on $\mathfrak{g}\otimes R$.

%\color{blue} The following sections are modified from \cite{MR3487217} to fit our setting.  

% (or the section on automorphisms) 
Since we do not need the degree of the polynomial $p(t)$ to be odd, we will first let $\deg p(t)=r+1$, up until Section~\ref{section:automorphism}.  
The reason we first work in the more general setting is that it allows us to fill in the remaining case which was not covered in \cite{MR3487217} (in this manuscript, the author required that the constant term $a_0$ of $p$ to be $a_0\neq 0$).  In Sections~\ref{section:automorphism} and \ref{section:main-theorem}, we restrict to the case of $r=2n$, which allows us to use the results in \cite{MR3631928} on automorphism groups of such algebras. 
%\color{red} 
%Are we assuming $a_0=0? Yes. $
%\color{black}
So let 
\[ 
p(t) = t(t-\alpha_1)\cdots (t-\alpha_{r})=\sum_{i=1}^{r +1 }a_i t^i, 
\] 
where the $\alpha_i$ are pairwise distinct nonzero complex numbers with $a_1=(-1)^r\prod_{i=1}^{r}\alpha_i\neq 0$ and $a_{r+1}=1$. 

Note that $R=\mathbb C[t^{\pm 1} ,u: u^2=p(t)]$ is a regular ring when $\alpha_i$ are distinct complex numbers, and $\text{Der}(R)$ is a simple infinite dimensional Lie algebra (see \cite{MR3631928}, \cite{MR829385}, \cite{MR966871} and \cite{MR2035385}).

We recall: 
\begin{lemma}[\cite{MR2813377}, Lemma 2.0.2]\label{lemma:CF11-Pki} 
If $u^m=p(t)$ and $R=\mathbb C[t^{\pm 1},u : u^m=p(t)]$, then one has in $\Omega^1_R/dR$ the congruence 
\begin{equation}\label{recursion1}
((m+1)(r+1)+im)t^{r+i}u\,dt\equiv -\sum_{j=0}^{r } ((m+1)j+mi)a_jt^{i+j-1}u\,dt\mod dR.
\end{equation}
\end{lemma}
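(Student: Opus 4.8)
The plan is to recognize the stated congruence as a single homogeneous relation and then exhibit that relation as an exact differential. In the present setting $\deg p=r+1$ with $p(t)=\sum_{j=0}^{r+1}a_jt^j$ and $a_{r+1}=1$, so the left-hand term $((m+1)(r+1)+im)t^{r+i}u\,dt$ is precisely the $j=r+1$ summand of $\sum_{j}((m+1)j+mi)a_jt^{i+j-1}u\,dt$ (using $a_{r+1}=1$ and $t^{i+(r+1)-1}=t^{r+i}$). Hence the lemma is equivalent to the assertion
\begin{equation*}
\sum_{j=0}^{r+1}\bigl((m+1)j+mi\bigr)a_j\,t^{i+j-1}u\,dt\equiv 0\mod dR,
\end{equation*}
and the entire task reduces to showing that this combination lies in $dR$. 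This reformulation works for every integer $i$, the negative exponents being harmless since $t$ is invertible.

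To produce the relations I need in $\Omega^1_R$, I would first differentiate the defining equation $u^m=p(t)$: applying the universal derivation $d$ and the Leibniz rule gives $m\,u^{m-1}\,du=p'(t)\,dt$. Since $u$ is \emph{not} invertible in $R$ (its $m$-th power $p(t)$ vanishes at the $\alpha_i$), I cannot solve for $du$ directly; instead I would multiply this identity by $u$ to clear the negative power, obtaining the polynomial relation
\begin{equation*}
m\,p(t)\,du=u\,p'(t)\,dt\qquad\text{in }\Omega^1_R.
\end{equation*}
This multiplication by $u$ is the key device that keeps every subsequent manipulation inside $R$.

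Next I would compute the exact form $d(t^i u^{m+1})$, which is automatically $\equiv 0$ modulo $dR$. By Leibniz,
\begin{equation*}
d(t^i u^{m+1})=i\,t^{i-1}u^{m+1}\,dt+(m+1)\,t^i u^m\,du.
\end{equation*}
Substituting $u^{m+1}=u\,p(t)$, $u^m=p(t)$, and $p(t)\,du=\tfrac1m\,u\,p'(t)\,dt$ from the previous step turns both terms into multiples of $u\,dt$, and expanding $p(t)=\sum_j a_jt^j$, $p'(t)=\sum_j j a_jt^{j-1}$ collects the coefficient of $a_jt^{i+j-1}$ as $i+\tfrac{m+1}{m}j=\tfrac1m((m+1)j+mi)$:
\begin{equation*}
d(t^i u^{m+1})=t^{i-1}u\Bigl(i\,p(t)+\tfrac{m+1}{m}\,t\,p'(t)\Bigr)dt=\tfrac1m\sum_{j}\bigl((m+1)j+mi\bigr)a_j\,t^{i+j-1}u\,dt.
\end{equation*}
Since the left-hand side is exact, the right-hand sum vanishes in $\Omega^1_R/dR$; multiplying through by $m$ gives the homogeneous relation above, and isolating the $j=r+1$ summand (with $a_{r+1}=1$) recovers the stated congruence.

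The computation is essentially forced once the exact form is chosen, so the only genuine obstacle is selecting the correct element to differentiate: the exponent $m+1$ on $u$ is exactly what is needed so that, after using $u^m=p$ and $p\,du=\tfrac1m up'\,dt$, every resulting term carries a single factor of $u$ together with a pure power of $t$, matching the shape of the claimed relation. The remaining work — verifying the Leibniz expansion and reindexing the coefficient $i+\tfrac{m+1}{m}j$ — is routine bookkeeping.
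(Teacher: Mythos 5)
Your proof is correct, and it is essentially the standard argument: the paper itself only quotes this lemma from \cite{MR2813377}, where the relation is likewise obtained by expanding the exact form $d(t^i u^{m+1})$ using $u^m=p(t)$ and $m\,p(t)\,du=u\,p'(t)\,dt$. Your reformulation as the single homogeneous relation $\sum_{j=0}^{r+1}((m+1)j+mi)a_j t^{i+j-1}u\,dt\equiv 0$, with the $j=r+1$ term isolated via $a_{r+1}=1$, matches the intended reading of the statement.
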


Motivated by Lemma~\ref{lemma:CF11-Pki} with $m=2$ and $a_0=0$, we let $P_{k,i}:=P_{k,i}(a_1,\dots, a_{r})$, $k\geq -r$, 
$ -r  
\leq i\leq -1$ 
be the polynomials in the $a_i$ satisfying the recursion relations: 
\begin{equation}\label{recursion2}
(2k+r+3)P_{k,i}=-\sum_{j= 1 }^{r} (3j+2k-2r)a_jP_{k-r+j-1,i}
\end{equation}
for $k\geq 0$ with the initial condition
$P_{l,i}=\delta_{l,i}$, $-r \leq i,l\leq -1$.
\section{Cocycles}
Let $p(t)=t^{r+1}+a_{r}t^{r}+\ldots+a_1t$, where $a_i\in\mathbb C$.  
Fundamental to the description of $\widehat{\mathfrak g}$ is the following:
\begin{theorem}[\cite{MR1303073}, Theorem 3.4]   Let $R=\mathbb C[t^{\pm 1}, u :  u^2=p(t)]$.  The set
\begin{equation}\label{eqn:basis-in-tINVudt}
\{\overline{t^{-1}\,dt},\overline{t^{-1}u\,dt},\dots, \overline{t^{-r}u\,dt}\}
\end{equation}
forms a basis of $\Omega_R^1/dR$.
\end{theorem}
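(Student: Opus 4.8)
The plan is to prove the statement in two stages: first that the displayed set spans $\Omega^1_R/dR$, and then that these $r+1$ classes are linearly independent.

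\emph{Setup and spanning.} Since $u^2=p(t)\in\mathbb C[t^{\pm1}]$, the ring $R$ is free of rank two over $\mathbb C[t^{\pm1}]$ with basis $\{1,u\}$, and $\Omega^1_R$ is generated as an $R$-module by $dt$ and $du$ subject to the two relations obtained by differentiating the defining relations of $R$: from $t\,t^{-1}=1$ one gets $d(t^{-1})=-t^{-2}\,dt$, and from $u^2=p(t)$ one gets the key relation $2u\,du=p'(t)\,dt$. Hence $\{t^i\,dt,\ t^iu\,dt,\ t^i\,du,\ t^iu\,du : i\in\mathbb Z\}$ is a $\mathbb C$-spanning set of $\Omega^1_R$. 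Working modulo $dR$, the identity $d(t^{i+1})=(i+1)t^i\,dt$ gives $\overline{t^i\,dt}=0$ for $i\neq-1$, leaving only $\overline{t^{-1}\,dt}$; the identity $d(t^iu)=it^{i-1}u\,dt+t^i\,du\equiv0$ lets me replace each $\overline{t^i\,du}$ by $-i\,\overline{t^{i-1}u\,dt}$; and multiplying $2u\,du=p'(t)\,dt$ by $t^i$ shows that each $\overline{t^iu\,du}$ is a combination of the $\overline{t^j\,dt}$, hence lies in $\mathbb C\,\overline{t^{-1}\,dt}$. Thus $\Omega^1_R/dR$ is spanned by $\overline{t^{-1}\,dt}$ together with the classes $\overline{t^iu\,dt}$, $i\in\mathbb Z$.

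\emph{Reducing the exponent range.} It remains to collapse the infinitely many classes $\overline{t^iu\,dt}$ to the finite set in \eqnref{eqn:basis-in-tINVudt}. Here I would invoke \lemref{lemma:CF11-Pki} with $m=2$ (and $a_0=0$): the congruence \eqnref{recursion1} expresses $\overline{t^{r+i}u\,dt}$ through the $r$ classes immediately below it whenever its leading coefficient $2i+3(r+1)$ is nonzero, which handles all sufficiently positive exponents; solving the same congruence for its lowest term, whose coefficient is $(2i+3)a_1$ with $a_1\neq0$, expresses $\overline{t^{i}u\,dt}$ through the $r$ classes immediately above it and handles all sufficiently negative exponents. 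Iterating these two recursions—whose solutions are precisely the polynomials $P_{k,i}$ of \eqnref{recursion2} and the companion family $Q_{m,i}$—reduces every $\overline{t^iu\,dt}$ to a $\mathbb C$-combination of $\overline{t^{-1}u\,dt},\dots,\overline{t^{-r}u\,dt}$, so \eqnref{eqn:basis-in-tINVudt} spans.

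\emph{Linear independence (the crux).} Suppose $c_0\,\overline{t^{-1}\,dt}+\sum_{i=1}^r c_i\,\overline{t^{-i}u\,dt}=0$, that is, $c_0t^{-1}\,dt+\sum_i c_it^{-i}u\,dt=df$ for some $f=\phi(t)+\psi(t)u$ with $\phi,\psi\in\mathbb C[t^{\pm1}]$. Expanding $df=\phi'\,dt+\psi'u\,dt+\psi\,du$ and splitting into the $u^0$- and $u^1$-parts over the function field $\mathbb C(t)$ via $du=\tfrac{p'}{2u}\,dt$, I would match coefficients. The $u^0$-part reads $\phi'=c_0t^{-1}$; since the derivative of a Laurent polynomial never has a $t^{-1}$ term, this forces $c_0=0$. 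The $u^1$-part reads
\[
\psi'+\tfrac{p'}{2p}\psi=\sum_{i=1}^r c_it^{-i},\qquad\text{equivalently}\qquad 2p\psi'+p'\psi=2p\sum_{i=1}^r c_it^{-i}.
\]
Evaluating the second form at a nonzero root $\alpha_j$ of $p$ gives $p'(\alpha_j)\psi(\alpha_j)=0$, and since the roots are simple this forces $\psi(\alpha_j)=0$; hence $\tilde p=\prod_j(t-\alpha_j)=p/t$ divides $\psi$. Substituting $\psi=\tilde p\,\theta$ back in, together with an order-of-pole count at $t=0$ and a degree count at $t=\infty$, then forces every $c_i=0$, establishing independence. (Alternatively, one may identify $\Omega^1_R/dR$ with the algebraic de Rham cohomology $H^1_{\mathrm{dR}}(\spec R)$ of this smooth affine curve, whose dimension equals $2g+(\#\text{punctures})-1=r+1$, the punctures being the points over $t=0$ and $t=\infty$; since the spanning set already has $r+1$ elements, it must be a basis.)

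I expect the substantive difficulty to lie entirely in the last paragraph: the spanning reductions are bookkeeping once \lemref{lemma:CF11-Pki} is available, whereas controlling the image of the twisted derivation $\psi\mapsto\psi'+\tfrac{p'}{2p}\psi$ on $\mathbb C[t^{\pm1}]$—or equivalently pinning down the de Rham dimension—is where the real work lies.
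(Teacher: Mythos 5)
This theorem is imported by the paper from Bremner \cite{MR1303073} and is not proved in the text, so there is no internal proof to compare against; your argument therefore has to stand on its own, and it does. The spanning half is exactly the machinery the paper builds anyway: the reductions you describe are \lemref{lemma:CF11-Pki} with $m=2$ read in the two directions (leading coefficient $3(r+1)+2i>0$ for descending exponents, trailing coefficient $(3+2i)a_1\neq 0$ for ascending ones), and iterating them is precisely the content of the recursions \eqnref{recursion2} and \eqnref{eq:recursion-relation-for-Qmi} in Proposition~\ref{oddlem}. The independence half, which the paper never addresses, is where your proposal adds value, and the skeleton is sound: writing $f=\phi+\psi u$, mapping into $\Omega_K$ for $K=\operatorname{Frac}(R)$ where $\{dt,u\,dt\}$ is a $\mathbb C(t)$-basis, killing $c_0$ because $\phi'$ has no residue, and attacking $2p\psi'+p'\psi=2p\sum_i c_it^{-i}$.

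One caution on the step you compress into ``an order-of-pole count and a degree count'': the naive count applied directly to $\psi$ does not close, since the lowest-degree coefficient $(2m+1)a_1\psi_m$ only forces $\mathop{\mathrm{ord}}_0\psi\geq 1-r$ while the top-degree coefficient $(2M+r+1)a_{r+1}\psi_M$ only forces $\deg\psi\leq 0$, which is compatible with $\psi\neq 0$. The divisibility $\tilde p=p/t\mid\psi$ that you extract from evaluating at the simple roots $\alpha_j$ is therefore essential, not decorative: after substituting $\psi=\tilde p\,\theta$ the equation becomes $3t\tilde p'\theta+2t\tilde p\theta'+\tilde p\theta=2\sum_i c_it^{1-i}$, whose lowest term $(2m+1)a_1\theta_m t^m$ forces $m\geq 1-r$ while its highest term $(3r+2M+1)\theta_Mt^{r+M}$ forces $M\leq -r$ (the degenerate value $M=-(3r+1)/2$ is also $<1-r$), and these are incompatible unless $\theta=0$. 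With that detail supplied the proof is complete; your alternative via $\dim H^1_{\mathrm{dR}}=2g+\#\{\text{punctures}\}-1=r+1$ (checking both parities of $r$, since the number of points over $t=\infty$ changes) is also a legitimate shortcut once spanning is known.
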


Let
\begin{equation}\label{eqn:basis-omega_sub_i_s}
\omega_0:=\overline{t^{-1}\,dt}\enspace \mbox{ and } \omega_k:=\overline{t^{-k}u\,dt}\quad \text{ for }1\leq k\leq r.\end{equation}
We will first describe the cocyles contributing to the {\it even} part $\mathbb C\omega_0$ of the center of the universal central extension of the hyperelliptic current algebra:

\begin{lemma}[\cite{MR1303073}, Proposition 4.2] \label{evenlem} 
 For $i,j\in\mathbb Z$ one has
\begin{equation}
\overline{t^i\,d(t^j)}= j \delta_{i+j,0}\omega_0
\end{equation}
and 
\begin{equation}
\overline{t^{i}u\,d(t^{j}u)}=\sum_{k=1}^{r+1}\left(j+\frac{1}{2}k\right)a_k\delta_{i+j,-k}\omega_0.
\end{equation}
\end{lemma}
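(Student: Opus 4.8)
The plan is to compute both cosets directly in $\Omega_R^1$ and then reduce modulo $dR$, exploiting the presentation of the K\"ahler differentials by the generators $dt,du$ subject to the single relation obtained by differentiating $u^2=p(t)$, namely $2u\,du = p'(t)\,dt$. Throughout I would rely on one elementary observation: for any integer $m$ the form $t^m\,dt$ is exact unless $m=-1$, since $t^m\,dt=\tfrac{1}{m+1}d(t^{m+1})$ when $m\neq -1$ (and $t^{m+1}\in R$ because $t^{\pm1}\in R$), so that $\overline{t^m\,dt}=0$ for $m\neq-1$ while $\overline{t^{-1}\,dt}=\omega_0$.

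For the first identity I would write $t^i\,d(t^j)=j\,t^{i+j-1}\,dt$ and apply the observation above with $m=i+j-1$. This vanishes unless $i+j=0$, in which case it equals $j\omega_0$, giving $\overline{t^i\,d(t^j)}=j\,\delta_{i+j,0}\,\omega_0$ at once.

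For the second identity the first step is to expand $d(t^j u)=j\,t^{j-1}u\,dt+t^j\,du$, multiply by $t^i u$, and collect the two resulting terms. Using $u^2=p(t)$ on the first term and the relation $u\,du=\tfrac12 p'(t)\,dt$ (half the defining relation) on the second, every occurrence of $u$ is eliminated, yielding
\[
t^i u\,d(t^j u)=\left(j\,t^{i+j-1}p(t)+\tfrac12\,t^{i+j}p'(t)\right)dt.
\]
Substituting $p(t)=\sum_{k=1}^{r+1}a_k t^k$ and $p'(t)=\sum_{k=1}^{r+1}k a_k t^{k-1}$ and combining gives $\sum_{k=1}^{r+1}\bigl(j+\tfrac12 k\bigr)a_k\,t^{i+j+k-1}\,dt$. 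Applying the exactness observation term by term (now $m=i+j+k-1$), only the summand with $i+j+k=0$ survives, contributing the factor $\delta_{i+j,-k}$ and the form $\omega_0$, which is precisely the asserted formula.

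The computation is essentially mechanical once the relation $2u\,du=p'(t)\,dt$ is in hand; the one point worth flagging is the passage to the quotient. I would take care that eliminating $u$ through $u^2=p$ and $u\,du=\tfrac12 p'\,dt$ genuinely produces a $u$-free differential, so that after reduction the answer lands entirely in $\mathbb{C}\omega_0$ rather than in the $u$-bearing basis vectors $\omega_1,\dots,\omega_r$. This is exactly what earns the lemma its role in isolating the even part of the center, and it is the only step where a misstep in bookkeeping could hide a spurious $\omega_k$ contribution.
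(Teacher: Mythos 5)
Your proof is correct. Note that the paper itself gives no proof of this lemma --- it is quoted directly from Bremner (Proposition 4.2 of the cited reference) --- so there is nothing to compare against; your computation is the standard one. The two key steps you use are exactly right: $\overline{t^m\,dt}=0$ for $m\neq -1$ because $t^{m+1}\in R$ (this is where the localization at $t$ matters), and the relation $2u\,du=p'(t)\,dt$ in $\Omega_R^1$, which follows from applying $d$ to $u^2=p(t)$ via the Leibniz rule. Your closing remark is also well taken: after eliminating $u$ the differential is a Laurent polynomial in $t$ times $dt$, so the reduction lands entirely in $\mathbb{C}\omega_0$ and no $\omega_k$ with $k\geq 1$ can appear, which is precisely why this lemma governs the even part of the center.
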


%\begin{proof}  First observe that 
%$$
%2u\,du=d(u^2)=p'(t)\,dt=\sum_{i=1}^{r+1}ia_it^{i-1}\,dt.
%$$
%
% The second congruence then follows from
%\begin{align*}
%t^{i}u\,d(t^{j}u)&=jt^{i+j-1}u^2\,dt+t^{i+j}u\,du \\
%&=jt^{i+j-1}\left(\sum_{k=1}^{r+1}a_kt^k\right)\,dt+\frac{1}{2}t^{i+j}\sum_{k=1}^{r+1}ka_kt^{k-1}\,dt \\
%&=\sum_{k=1}^{r+1}\left(j+\frac{1}{2}k\right)a_kt^{i+j+k-1}\,dt .
%\end{align*}
%\end{proof}

For the {\it odd} part $\mathbb C\omega_{1}\oplus \ldots \oplus \mathbb C\omega_{r}$ of the center, we generalize Proposition 4.2 in \cite{MR1303073} via the following result:  
\begin{proposition}\label{oddlem}  
 For $i,j\in\mathbb Z$, one has
\begin{equation}\label{eqn:omega_i-Pij-Qij}
\overline{t^{i}u\,d(t^{j})}=j\begin{cases} 
	\displaystyle{\sum_{k=1}^{r}} P_{i+j-1,-k}  \omega_{k} &\quad \text{ if } i + j \geq -r+1, \\
 	\displaystyle{\sum_{k=1}^{r}}  Q_{-i-j+1,-k} \omega_{k} 			&\quad \text{ if } i + j <  -r+1\color{black},  \\ 
\end{cases}
\end{equation} 
where $P_{m,i}$ is the recursion relation in Equation~\eqnref{recursion2}  
and $Q_{m,i}$ satisfies 
\begin{equation}\label{eq:recursion-relation-for-Qmi}
 ( 2m-3 )a_1Q_{m,i}  = \left(\sum_{j=2}^{r+1 } ( 3j -2m)a_j Q_{ m - j+ 1 ,i}  \right)
\end{equation}
with initial condition $Q_{m,i}=\delta_{m,-i}$ for $1\leq m\leq r$ and $-r\leq i\leq -1$.
\end{proposition}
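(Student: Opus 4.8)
The plan is to reduce everything to a single family of classes $\eta_m := \overline{t^m u\,dt}$ and to show that, suitably normalized and indexed, these satisfy \emph{exactly} the recursions defining $P_{k,i}$ and $Q_{m,i}$. First I would note that $d(t^j)=j\,t^{j-1}\,dt$, so $\overline{t^i u\,d(t^j)} = j\,\eta_{i+j-1}$; the claimed overall factor of $j$ then appears immediately, and the degenerate case $j=0$ is trivial. Writing $m=i+j-1$, the two conditions $i+j\geq -r+1$ and $i+j<-r+1$ become $m\geq -r$ and $m\leq -r-1$, and since $\omega_k=\eta_{-k}$ for $1\leq k\leq r$, the proposition reduces to proving the two identities $\eta_m=\sum_{k=1}^r P_{m,-k}\,\omega_k$ for $m\geq -r$ and $\eta_m=\sum_{k=1}^r Q_{-m,-k}\,\omega_k$ for $m\leq -r-1$.

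The engine is \lemref{lemma:CF11-Pki} applied to $u^2=p(t)$ (the exponent-$2$ case) with $a_0=0$. After absorbing its left-hand term into the sum using $a_{r+1}=1$, that congruence takes the clean form
\[
\sum_{j=1}^{r+1}(3j+2s)\,a_j\,\eta_{s+j-1}\equiv 0\pmod{dR},\qquad s\in\mathbb Z .
\]
I would check this rewriting carefully, since it is exactly where the hypotheses $a_0=0$ and $a_{r+1}=1$ enter. This single relation contains both recursions. Solving for the top term $\eta_{s+r}$ (the $j=r+1$ summand, with coefficient $2s+2r+3\neq 0$) and setting $k=s+r$ reproduces line by line the defining recursion \eqnref{recursion2} for $P_{k,i}$; solving instead for the bottom term $\eta_s$ (the $j=1$ summand, with coefficient $(2s+3)a_1$), then substituting $s=-\mu$ and multiplying through by $-1$, reproduces the recursion \eqnref{eq:recursion-relation-for-Qmi} for $Q_{\mu,i}$.

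With this in hand the proof is a two-sided induction. The base cases $-r\leq m\leq -1$ are immediate: $\eta_{-k}=\omega_k$ matches both $P_{m,-k}=\delta_{m,-k}$ and $Q_{-m,-k}=\delta_{-m,k}$, each of which forces $\eta_m=\omega_{-m}$. For $m\geq 0$ I would induct upward, using the $P$-form of the relation to express $\eta_m$ in terms of $\eta_{m-r},\dots,\eta_{m-1}$ (all in the already-settled range, and carrying the same coefficients as $P_{m,-k}$) and dividing by $2m+r+3>0$. For $m\leq -r-1$ I would induct downward, using the $Q$-form to express $\eta_m$ in terms of $\eta_{m+1},\dots,\eta_{m+r}$ (indices strictly closer to $0$) and dividing by $(2(-m)-3)a_1$, which is nonzero since $-m\geq r+1$ and $a_1\neq 0$; this last point is precisely where the standing hypothesis $a_1\neq 0$ is essential.

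The main obstacle I anticipate is purely bookkeeping: pinning down the index substitutions ($k=s+r$ in one direction, $s=-\mu$ in the other) so that the coefficients $(3j+2s)a_j$ align exactly with $(3j+2k-2r)a_j$ and $(3j-2\mu)a_j$, and confirming at each step that the pivot coefficient being divided out is nonzero over the relevant range. Once the rewritten relation is verified and the two inductions are set up with the correct ranges, no genuinely hard estimate remains.
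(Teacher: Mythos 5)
Your proposal is correct and follows essentially the same route as the paper: both reduce to the classes $\overline{t^{m}u\,dt}$, invoke Lemma~\ref{lemma:CF11-Pki} with $m=2$ (using $a_0=0$ and $a_{r+1}=1$ to fold the leading term into a single relation $\sum_{j=1}^{r+1}(3j+2s)a_j\,\overline{t^{s+j-1}u\,dt}\equiv 0$), and then run an upward induction matching \eqnref{recursion2} for $P_{k,i}$ and a downward induction matching \eqnref{eq:recursion-relation-for-Qmi} for $Q_{m,i}$ after solving for the $j=r+1$ and $j=1$ pivots respectively. Your packaging of the two recursions as one symmetric congruence, together with the explicit nonvanishing checks on the pivot coefficients, is a slightly cleaner presentation of exactly the paper's argument.
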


\begin{proof}
We set $m=2$ and replace $j$ in the summation in Equation~\eqref{recursion1} by $k$, and then replace $i$ with $-r+i+j-1$
to obtain:   
%If $i+j-1=-1,\dots, -n$, then 
%\begin{align*}
%t^{i}u\,d(t^j)&=jt^{-1}u\,dt=j\omega_{1},\quad\text{ for }i+j=0, \\
%t^{i}u\,d(t^j)&=jt^{-2}u\,dt=j\omega_{2},\quad\text{ for }i+j=-1, \\ 
%\vdots & \qquad \vdots \\
%t^{i}u\,d(t^j)&=jt^{-n}u\,dt=j\omega_{n},\quad\text{ for }i+j=-n+1,
%\end{align*}
% 
%\begin{equation}
%(3n+2l)t^{n+l-1}u\,dt\equiv -\sum_{k=0}^{n-1}(3k+2l)a_kt^{l+k-1}u\,dt\mod dR.
%\end{equation}
%\begin{equation}
%(3n+2(i+j-n))t^{i+j-1}u\,dt\equiv -\sum_{k=0}^{n-1}(3k+2(i+j-n))a_kt^{i+j-n+k-1}u\,dt\mod dR.
%\end{equation}
%\begin{comment}
%\[ 
%\color{red} 
%((m+1)(r+1)+im)t^{r+i}u\,dt\equiv -\sum_{k=0}^{r } ((m+1)k+mi)a_kt^{i+k-1}u\,dt\mod dR  
%\color{black} 
%\] 
%\[ 
%\color{red} 
%(3(r+1)+2(-r+i+j-1))t^{i+j-1}u\,dt\equiv -\sum_{k=1}^{r } (3k+2(-r+i+j-1)a_kt^{-r+i+j+k-2}u\,dt\mod dR  
%\] 
%\[ 
%\color{red} 
%(2(i+j)+r+1)t^{i+j-1}u\,dt\equiv -\sum_{k=1}^{r } (3k+2(i+j)-2(r+1))a_kt^{-r+i+j+k-2}u\,dt\mod dR  
%\]
%\end{comment}
\[  
(2(i+j)+ r+1)t^{i+j-1}u\,dt\equiv -\sum_{k=1}^{r } (3k+2(i+j)-2(r+1))a_kt^{i+j-(r+1)+k-1}u\,dt\mod dR,   
\color{black} 
\]
and similarly
%\begin{equation} 
%(2k+n+2)P_{k,i}=-\sum_{l=0}^{n-1}(3l+2k-2n+2)a_lP_{k-n+l,i}
%\end{equation}
%\begin{equation} 
%(2(i+j)+n)P_{i+j-1,i}=-\sum_{l=0}^{n-1}(3l+2(i+j)-2n)a_lP_{i+j-n+l-1,i}
%\end{equation}
\begin{equation}\label{eqn:P-ij-only-recursion-equation}
(2(i+j)+ r+1)P_{i+j-1,\iota} =  -\sum_{k=1}^{r } (3k+2(i+j)-2(r+1))a_k P_{i+j-1 +k -(r +1) , \iota}.  	\hspace{4mm}  \color{black} 
\end{equation}
% 
% 
%\begin{equation} \label{eqn2}
%\cancel{(2(i+j)+n)P_{i+j-1,i}=-\sum_{k=0}^{n-1}(3k+2(i+j)-2n)a_k P_{i+j-n+k-1,i}}.
%\end{equation}

%For $i+j=1$ and $-n\leq i\leq -1$ we have
%\begin{equation} 
%P_{0,i}=-\sum_{l=0}^{n-1}\frac{3l-2n+2}{n+2}a_lP_{l-n,i}=- \left(\frac{3i+n+2}{n+2}\right)a_{n+i}
%\end{equation}
%or for $0\leq i\leq n-1$ one has 
%\begin{equation} 
%P_{0,i-n}=- \left(\frac{3i-2n+2}{n+2}\right)a_{i}
%\end{equation}
%For $i+j=2$ and setting $a_{-1}=0$, 
%\begin{align*} 
%P_{1,i}&=-\sum_{l=0}^{n-1}\frac{(3l-2n+4)a_l}{n+4}P_{l-n+1,i} =- \frac{(3(n-1)-2n+4)a_{n-1}}{n+4}P_{0,i}- \frac{(3(i+n-1)-2n+4)a_{i+n-1}}{n+4} \\
%&=- \frac{(n+1)a_{n-1}}{n+4}P_{0,i}- \frac{(3i+n+1)a_{i+n-1}}{n+4} \\
%\end{align*} 
%or rather 
%\begin{align*} 
%P_{1,k-n}
%&=- \frac{(n+1)a_{n-1}}{n+4}P_{0,k-n}- \frac{(3k-2n+1)a_{k-1}}{n+4} \\
%\end{align*} 
%
%
%\begin{align*}
%\overline{u\,dt}&=-\sum_{k=0}^{n-1}\left(\frac{3k+2-2n}{n+2}\right)a_k\omega_{n-k}  \\
%&=\sum_{k=0}^{n-1}P_{0,k-n}\omega_{n-k} , \\
%\overline{tu\,dt}&=-\sum_{k=0}^{n-1}\left(\frac{3k+4-2n}{n+4}\right)a_k\overline{t^{k-n+1}u\,dt}  \\
%&=-\sum_{k=0}^{n-2}\left(\frac{3k+4-2n}{n+4}\right)a_k\omega_{n-k-1}  \\
%&\quad - \left(\frac{n+1}{n+4}\right)a_{n-1}\overline{ u\,dt} \\  \\
%&=-\sum_{k=0}^{n-2}\left(\frac{3k+4-2n}{n+4}\right)a_k\omega_{n-k-1}  \\
%&\quad -\sum_{k=0}^{n-1} \left(\frac{n+1}{n+4}\right)a_{n-1}P_{0,k-n}\omega_{n-k}\\
%&=-\sum_{k=1}^{n-1}\left(\frac{3k+1-2n}{n+4}\right)a_{k-1}\omega_{n-k}  \\
%&\quad -\sum_{k=0}^{n-1} \left(\frac{n+1}{n+4}\right)a_{n-1}P_{0,k-n}\omega_{n-k}\\
%&=\sum_{k=0}^{n-1}P_{1,k-n}\omega_{n-k}
%\end{align*}

So now assume for $\iota \geq -r$,  $\hspace{4mm}$  
\color{black}

\begin{equation}\label{eqn:base-step-induction-dt-P-ij}
\overline{t^{\iota } u\,dt}
=\sum_{k=1}^{r} P_{ {\iota }, k - (r+1)} \omega_{r + 1 - k}.
\end{equation}
It is clear that Equation~\eqref{eqn:base-step-induction-dt-P-ij} holds when ${\iota } = - r , \dots, -1$ as $P_{l,i}=\delta_{l,i}$ for $-r\leq i,l\leq -1$.   
Then the induction step is: 
\begin{align*} 
\overline{t^{\iota +1}u\,dt} &=  -\sum_{k=1}^{r }\left( \dfrac{ 3k + 2\iota - 2r + 2 }{ 2 \iota + r + 5 }\right) a_k \overline{t^{\iota +k - r } u \: dt} \\   %MSI: Set $i+j=\iota+2$. And then $j = \iota +r - l + 3 $.   
&=  - \sum_{l=1}^{r} \sum_{k=1}^{r }\left( \dfrac{ 3k + 2\iota - 2r + 2 }{ 2 \iota  + r + 5 }\right) a_k   P_{ {\iota +k  - r }, l -( r+1)} \omega_{r + 1 - l} \\ 
&=\sum_{l=1}^{r}P_{\iota + 1 ,  l -(r  + 1)}  \omega_{r+1-l}.    
\end{align*}
\color{black} 
Now, for $i+j\geq -r +1 $, we have 
\begin{equation}
\overline{t^{i}u\,d(t^j)} 
= j\overline{t^{i+j-1}u\,dt} 
= j \sum_{l=1}^{r}P_{i+ j -1 ,  l -(r  + 1)}  \omega_{r+1-l} 
= j \sum_{k=1}^{r} P_{i+ j -1 ,  -k } \omega_k.
\end{equation}
\color{black}

Again consider  \eqnref{recursion1} and set $r+i = k-1$ or $i = k - (r + 1)$: 
\begin{equation}
( 2k + r + 1   )  \overline{ t^{k-1} u\: dt}  = - \sum_{j=1}^{r } ( 3j+2k - 2(r + 1)  )a_j \overline{ t^{k +j -1 - (r + 1)}u\,dt},  
\end{equation}
 and write it as
\begin{equation}\label{eq:Q-ij-but-using-tj-u-dt}					%      Set $-m=k-1$ or $-m +1 = k $: 
(   -2(m -1) + r + 1  )  \overline{ t^{-m} u\: dt}  = - \sum_{j=1}^{r } ( 3j -2m +2  - 2(r + 1)  )a_j \overline{ t^{-( m - j + r + 1 )}u\,dt}. 
\end{equation}
Then 
\begin{align*}
0  &= - \sum_{j=1}^{r +1} ( 3j+2k - 2(r + 1)  )a_j \overline{ t^{k +j -1 - (r + 1)}u\,dt}  \\
&= - ( 2k - 2r + 1  )a_1 \overline{ t^{k  - (r + 1)}u\,dt}-\ldots - ( 2k +r -2)  )a_r \overline{ t^{k-2 }u\,dt} -  (2k +r + 1  )\overline{ t^{k-1 }u\,dt},
\end{align*}
as $a_{r+1}=1$.   We rewrite this as 
\begin{align*}
 \overline{ t^{k  - (r + 1)}u\,dt}  &=\frac{-1}{( 2k - 2r + 1  )a_1}\left( ( 2k - 2r+4  )a_2 \overline{ t^{k -r}u\,dt} +\ldots \right. \\
 &\hspace{4mm}\left. +( 2k +r -2)  a_r \overline{ t^{k-2 }u\,dt} +  (2k +r + 1  )\overline{ t^{k-1 }u\,dt}\right) \\
 &=\frac{-1}{( 2k - 2r + 1  )a_1}\left(\sum_{j=2}^{r +1} ( 3j+2k - 2(r + 1)  )a_j \overline{ t^{k +j -1 - (r + 1)}u\,dt}\right).
\end{align*}
For $k=0,-1,-2$ we have for instance
\begin{align*}
\overline{ t^{ - (r + 1)}u\,dt}  &=\frac{1}{(  - 2r + 1  ) a_1}\left( - ( - 2r+4  )a_2 \overline{ t^{-r}u\,dt} -\ldots - ( r -2)  a_r \overline{ t^{-2 }u\,dt} -  (r + 1  )\overline{ t^{-1 }u\,dt}\right) \\
 \overline{ t^{  - r -2}u\,dt}  &= \frac{1}{( - 2r -1  )a_1}\left(- ( - 2r+2  )a_2 \overline{ t^{-r-1}u\,dt} -\ldots - ( r -4)  a_r \overline{ t^{-3 }u\,dt} -  (r -1  )\overline{ t^{-2 }u\,dt} \right) \\
 \overline{ t^{ - r -3}u\,dt}  &=\frac{1}{(  - 2r -3  )a_1}\left( - (  - 2r  )a_2 \overline{ t^{ -r-2}u\,dt} -\ldots - ( r -6)  a_r \overline{ t^{-4 }u\,dt} -  (r -3  )\overline{ t^{-3 }u\,dt}\right) .
\end{align*}
Setting $-m=k-r-1$, we get $k=-m+r+1$ and 
\begin{align}
 \overline{ t^{-m}u\,dt}   &=\frac{1}{( 2m-3 )a_1}\left(\sum_{j=2}^{r +1} ( 3j-2m )a_j \overline{ t^{-m+j -1 }u\,dt}\right) \label{trecursion}
\end{align}
for $m\geq r+1$. This leads us to the recursion relation: 
\begin{equation}\label{recursion3.2-edited} 
 Q_{m,i}  = \frac{1}{( 2m-3 )a_1}\left(\sum_{j=2}^{r+1 } ( 3j -2m)a_j Q_{ m - j+ 1 ,i}  \right)
\end{equation}
\color{black}
for $m\geq r+1$ with the initial condition
$Q_{m,i}=\delta_{m,-i}$, $1\leq m\leq r$ and $-r\leq i\leq -1$. 
\color{black}

So now assume for $\iota \geq 1$,  
\begin{equation}\label{recursioneq:Q-ij}
\overline{t^{-\iota }u\,dt}
= \sum_{k=0}^{r-1}  Q_{\iota, k-r}\omega_{r - k}= \sum_{k=1}^{r}  Q_{\iota, -k}\omega_{k}.
\end{equation}
It is clear that Equation~\eqref{recursioneq:Q-ij} holds for $\iota =1,\dots ,r$ 
as $Q_{m,i}=\delta_{m,-i}$, $1\leq m\leq r$ and $-r\leq i\leq -1$.

For $\iota \geq r$, we have by \eqnref{trecursion}, \eqnref{recursion3.2-edited} and the induction hypothesis: 
\color{black}

%\color{blue}  
%Substitute $m = \iota+1 $ and change $ i = k - (r+1) $:    \\ 
%$   Q_{\iota+1 ,  k - (r+1) }  = - \sum_{j=1}^{r } \left( \dfrac{ 3j  -2\iota - 2(r +1)  }{ -2\iota   + r + 1 }\right)   
% a_j Q_{ \iota + r - j + 2 ,  k - (r+1) } $
%   \\ 
%\color{black}

 \begin{align*}
\overline{t^{-(\iota +1)}u\,dt} &= \sum_{j=2}^{r +1} \frac{( 3j-2\iota -2)a_j}{( 2\iota-1 )a_1} \overline{ t^{-\iota+j -2 }u\,dt}\\
%&= \sum_{j=1}^{n}\frac{(3j-2\iota )a_j}{2 \iota a_0}\overline{t^{-(\iota + 1 - j )}u\,dt}.
% \\
% &= \sum_{j=1}^{n}\frac{(3j-2r)a_j}{2ra_0}\sum_{k=0}^{n-1}Q_{r+1-j,k-n}\omega_{n-k}
% \\
&= \sum_{k=0}^{r-1}\sum_{j=2}^{r+1}\frac{( 3j-2i -2)a_j}{( 2i-1 )a_1} Q_{\iota  - j+2, k - r}\omega_{r-k}  \\
&=   \sum_{k=0}^{r-1}Q_{\iota + 1, k - r}\omega_{r-k}, 
\end{align*}
which proves \eqnref{trecursion} for $m=\iota+1$. 

We conclude for $i+j-1< -r $, we have 
\begin{equation}
\overline{t^{i}u\,d(t^j)}=j\overline{t^{i+j-1}u\,dt}= j \sum_{k=1}^{r}  Q_{ -i - j  + 1 , k-(r+1)}\omega_{r+1 - k} 
= j \sum_{k=1}^{r}  Q_{ -i - j  + 1 , -k }\omega_{ k}. 
\end{equation}

\color{black} 
\end{proof}

\section{ Fa\'a de Bruno's Formula and Bell Polynomials}\label{FaadeBruno'sFormulaandBellPolynomials}
 Now consider the formal power series 
 
\begin{equation}
P_i(z):=P_i(a_1,\dots, a_{r}, z):=\sum_{k\geq -r} P_{k,i} z^{k+r} = \sum_{k\geq 0} P_{k-r,i} z^k 
\end{equation}
for $-r\leq i\leq -1$. 
We will find an integral formula for $P_i(z)$ below. 
One can show that $P_i(z)$ must satisfy the first order differential equation
\begin{equation}
\frac{d}{dz}P_i(z)-\frac{Q(z)}{2zT(z)}P_i(z)=\frac{R_i(z)}{2zT(z)}, 
\end{equation}
\color{black}
where
\begin{gather*}
T(z):=\sum_{j=1}^{r+1} a_j z^{r+1-j},\quad Q(z) := z T'(z) +(r-3) T(z),  
\end{gather*}
and
\begin{gather*}
R_i(z):=\sum_{j=1}^{r+1} \left(\sum_{ 1-j \leq k < 0 }( 3j + 2k - 2r ) a_j \delta_{k + j -r-1 , i} z^{k+r} \right)   
\end{gather*}
since indeed, we have  
\begin{align*}
2zT(z)\frac{d}{dz}P_i(z)-Q(z)P_i(z)
&=  \sum_{k\geq 0}\left(  \sum_{j=1}^{r+1} 2ka_j P_{k-r,i}z^{r+k+1-j}   - \sum_{j=1}^{r+1} (2r-j-2)a_j  P_{k-r,i}z^{r+k+1-j} \right)  \\
&=   \sum_{k\geq 0} \left( \sum_{j=1}^{r+1} (2k-2r+j+2)a_jP_{k-r,i} z^{r+1+k-j}\right) \\
&=   \sum_{k\geq 0} \left(  \sum_{j=1}^{r+1} (2k+3j-2r) a_j P_{k+j-r-1, i } z^{r+k}\right) \\
&\quad + \sum_{j=1}^{r+1}\left(  \sum_{1-j\leq k< 0} (2k+3j-2r) a_j P_{k+j-r-1,i} z^{r+k}\right) \\
&=R_i(z), 
\end{align*}
where the first summation in the second to last equality is zero due to \eqnref{recursion2}.
\color{black}

An integrating factor is 
$$
\mu(z)=\exp\int - \frac{Q(z)}{2zT(z)} \,dz
%=\exp\int - \frac{zT'(z)+(r-3)T(z)}{2zT(z)} \,dz
%=\exp\left(  -\frac{1}{2}\logT(z)-\frac{(r-3)}{2}\log z \right) 
=\frac{1}{z^{(r-3)/2}\sqrt{T(z)}}, 
$$
and so 
\begin{equation}\label{hyperellpiticintegral1}
P_i(z):=z^{(r-3)/2}\sqrt{T(z)}\int \frac{R_i(z)}{2z^{(r-1)/2}T(z)^{3/2}} \,dz.
\end{equation}
The way we interpret the right hand hyperelliptic integral ($T(0)$ $=$ $a_{r+1}$ $=$ $1\neq 0$) is to expand $R_i(z)/T(z)^{3/2}$ in terms of a Taylor series about $z=0$ and then formally integrate term by term. We then multiply the result by series for 
$z^{(r-3)/2}\sqrt{T(z)}$. 
 Let us explain this more precisely.

One can expand both $\sqrt{T(z)}$ and $1/T(z)^{3/2}$ using Bell polynomials and Fa\`a di Bruno's formula as follows.  
Bell polynomials in the variables $z_1,z_2,z_3,\ldots, z_{m-k+1}$ are defined to be 
\begin{align*}
B_{m,k}(z_1,\dots,z_{m-k+1}):=\sum \frac{m!}{l_1!l_2!\cdots l_{m-k+1}!}\left(\frac{z_1}{1!}\right)^{l_1} \cdots \left(\frac{z_{m-k+1}}{(m-k+1)!!}\right)^{l_{m-k+1}}, 
\end{align*}
where the sum is over $l_1+l_2+\ldots +l_{m-k+1}=k$ and $l_1+2l_2+3l_3+\ldots +(m-k+1)l_{m-k+1}=m$ (see \cite{MR1502817}).
\color{black}
 
Now Fa\`a di Bruno's formula  (\cite{dB1} and \cite{dB2}; discovered earlier by Arbogast \cite{Arbogast}) for the $m$-derivative of $f(g(x))$ is 
\begin{align*}
\frac{d^m}{dx^m}f(g(x))=\sum_{l=0}^mf^{(l)}(g(x))B_{m,l}(g'(x),g''(x),\dots, g^{(m-l+1)}(x)).
\end{align*}
Here $f(x)=x^{-3/2}$, $g(x)=T(x)$, so we get 
\begin{equation}
f^{(m)}(x)=\frac{(-1)^m(2m+1)!!}{2^mx^{(2m+3)/2}}
\end{equation}
where 
$$
(2k-1)!!=\Gamma(k+(1/2))2^k/\sqrt{\pi}.
$$  
Then  $(-1)!! =1$
and $T^{(k)}(0)=k!a_{r+1-k}$ so that 
\begin{align*}
\left. \frac{d^m}{dx^m}f(g(x))\right\vert_{x=0}=\sum_{l=0}^m\frac{(-1)^ l(2 l+1)!!}{2^ l }B_{m, l}(a_{r},2a_{r-1},\dots,(m- l+1)!a_{r-m+l}).
\end{align*}
As a consequence 
\begin{align*}
\frac{1}{T(z)^{3/2} }&=\sum_{m=0}^\infty \frac{1}{m!}\left. \frac{d^m}{d z^m}f(g( z))\right\vert_{ z=0} z^m \\
&=\sum_{m=0}^\infty \frac{1}{m!} \left(\sum_{l=0}^m\frac{(-1)^ l(2 l+1)!!}{2^ l }B_{m, l}(a_{m-1},2a_{m-2},\dots,(m- l+1)!a_{r-m+l})\right) z^m ,
\end{align*}
and hence
\begin{equation}
T_m(a_1,\dots,a_{m-1})= \frac{1}{m!}  \sum_{l=0}^m\frac{(-1)^ l(2 l+1)!!}{2^ l }B_{m, l}(a_{m-1},2a_{m-2},\dots,(m- l+1)!a_{r-m+l}),
\end{equation}
where $T_m(a_1,\dots,a_{m-1})$ are defined through the equation
\begin{align*}
\frac{1}{T(z)^{3/2} }&=\sum_{m=0}^\infty T_m(a_1,\dots,a_{m-1})z^m.
\end{align*}

Similarly for $\sqrt{T(z)}$, we set $f(z)=\sqrt{z}$ so that 
$$
f^{(k)}(z)=\frac{(-1)^{k+1}(2k-3)!!}{2^kz^{(2k-1)/2}}
$$ 
for $m\geq 0$ 
and thus 
\begin{align*}
\sqrt{T(z)}&=\sum_{m=0}^\infty \frac{1}{m!} \left(\sum_{l=0}^m\frac{(-1)^{l+1}(2l-3)!!}{2^l }B_{m, l}(a_{m-1},2a_{m-2},\dots,(m- l+1)!a_{l-1})\right) z^m.
\end{align*}

We then form the formal power series 
\begin{equation}
Q_i(z):=Q_i( a_1,\dots, a_{r}, z) = \sum_{k\geq r+2} Q_{k-(r+1), i} z^k = \sum_{k\geq 1}Q_{k,i}z^{k+r+1}
\end{equation} 
for $1\leq i\leq r+1$.  
Similar to above, we see that this formal series must satisfy 
\begin{equation}  
\frac{d}{dz}Q_i(z)-\frac{Q(z)}{2z P(z)}Q_i(z)=\frac{S_i(z)}{2z P(z)},   
\end{equation}  
where 
\begin{gather}  
P(z):=\sum_{j=1}^{r+1} a_j z^{j},  \quad   Q(z) := z P'(z) + 2 (r+2) P(z) , \label{pqs} 
\end{gather}  
and
\begin{gather*}
   S_i(z):=-\sum_{m=1}^{r+1} \left(  \sum_{j=1}^{m-1} (3j -2m+2)a_jQ_{m-j,i}\right)z^{ m + r + 1 }.    \end{gather*}
Indeed
\begin{align*}
2z  P(z)\frac{d}{dz}Q_i(z)-Q(z)Q_i(z)
&=   \sum_{k\geq 1}\sum_{j=1}^{r+1} (2(k+r+1) - j -2(r+1) -2)a_jQ_{k,i}z^{j+k+r+1}\\
&= -   \sum_{k\geq 1}\sum_{j=1}^{r+1} (j-2k+2) a_j Q_{k,i} z^{j+k+r+1}\\
%&= -  \sum_{k\geq 1} \left( \sum_{j=0}^n (3j -2m+2)a_jQ_{m-j,i}z^{n+m}\right)\quad m=k+j\\
&=  - \sum_{m\geq r+2} \left( \sum_{j=1}^{r+1} (3j -2m+2) a_j Q_{m-j,i} \right) z^{m+r+1} \\
&\quad - \sum_{m=1}^{r+1} \left(  \sum_{j=1}^{m-1} (3j -2m+2)a_jQ_{m-j,i}\right)z^{m+r+1} \\
&=S_i(z).
\end{align*}

 An integrating factor is 
$$
\mu(z)=\exp\int - \frac{Q(z)}{2z  P(z)} \,dz
%=\exp\int - \frac{z  P'(z)-2(n+1)  P(z)}{2z  P(z)} \,dz
%=\exp\left(  -\frac{1}{2}\log  P(z)+(n+1)\log z \right) 
=\frac{1}{z^{r+2}\sqrt{ P(z)}},
$$
and so 
\begin{equation}\label{hyperellpiticintegral2}
Q_i(z):=  z^{r+2}\sqrt{  P(z)} \int \frac{S_i(z)}{2 z^{r+3} P(z)^{3/2}} \,dz.
\end{equation}
\color{black}

\subsection{Example}    Let $p(t)=t^{2n+1}-t=t(t-\zeta)(t-\zeta^2)\cdots (t-\zeta^{2n})$, where $\zeta=\exp(\pi\imath/n)$ is a primitive $2n$-th root of unity.  Then $a_1=-1$ and $a_j=0$ for $2\leq j\leq 2n$, and hence
the recursion relation \eqnref{recursion2} becomes 
\begin{equation}
P_{k,i}=\frac{-4n+2 k +3}{2n+2k+3}P_{k-2n,i}, 
\end{equation}
where $k\geq 0$. Since $P_{\ell,i}=\delta_{\ell,i}$ for all $-2n \leq \ell, i\leq -1$, 
the closed form is: 
\begin{equation}
P_{k,i} = \prod_{j=1}^{s} \dfrac{-4(jn)+2k + 3}{(2-4(j-1))n+2k + 3}P_{k-s(2n),i} \hspace{2mm} \mbox{ where } k\geq 0. 
\end{equation}
This implies when $k=s(2n)+i$ or $s=\frac{k-i}{2n}$, we have  
\[ 
P_{k,i} = \prod_{j=1}^{\frac{k-i}{2n}} \dfrac{-4(jn)+2k + 3}{(2-4(j-1))n+2k + 3} \hspace{2mm}\mbox{ where } k\geq 0. 
\]

Similarly, the recursion relation \eqref{eq:recursion-relation-for-Qmi} becomes 
\begin{equation}
Q_{k,i} = \dfrac{6n-2k+3}{3-2k}Q_{k-2n,i},  
\end{equation}
with initial conditions $Q_{m,i}=\delta_{m,-i}$, where $1\leq m\leq 2n$ and $-2n\leq i\leq -1$. 
So the closed form is
\begin{equation}
Q_{k,i} =  \prod_{j=1}^{v} \dfrac{(6+4(j-1))n-2k + 3}{(4(j-1))n-2k + 3}Q_{k-v(2n),i} \hspace{2mm}\mbox{ where } k\geq 0. 
\end{equation} 
So when $k=v(2n)-i$ or $v=\frac{k+i}{2n}$,  then 
\[ 
Q_{k,i} =  \prod_{j=1}^{\frac{k+i}{2n}} \dfrac{(6+4(j-1))n-2k + 3}{(4(j-1))n-2k + 3}.
\] 
%Since $(2\ell+1)!!=\frac{(2\ell+1)!}{2^{\ell}\cdot \ell!}$

Thus, 
\begin{equation}
P_i(z) = P_i(-1,0,\ldots, 0,z) = \sum_{k\geq 0} \prod_{j=1}^{\frac{k-i}{2n}-1} \dfrac{-4(j+1)n+2k+3}{(2-4j)n+2k+3}\delta_{\bar k,\bar i}  z^k, 
\end{equation}
where $-2n \leq i\leq -1$, $\bar a$ is the congruence class of $a \! \!\mod 2n$,  
and 
\begin{equation}
Q_i(z) = Q_i(-1,0,\ldots, 0,z) = \sum_{k\geq 1}  \prod_{j=1}^{\frac{k+i}{2n}} \dfrac{(6+4(j-1))n-2k + 3}{(4(j-1))n-2k + 3} \delta_{\overline {k+i},\bar 0} z^{k+2n+1},  
\end{equation}
where $1\leq i\leq 2n+1$.

\subsection{Example}  We consider now the particular example $
p(t)=t^5-2c t^3+t $.   Here $r=4$, $a_0=0=a_2=a_4$, $a_1=1=a_5$ and $a_3=-2c$.    We have 
\begin{gather*}
T(z)=z^4-2cz^2+1,
\end{gather*}
and
\begin{align*}
R_{-1}(z)&=\sum_{j=1}^{5} \left(\sum_{ 1-j \leq k < 0 }( 3j + 2k - 8 ) a_j \delta_{k + j, 4} z^{k+4} \right) =5z^3,\\
R_{-2}(z)&=\sum_{j=1}^{5} \left(\sum_{ 1-j \leq k < 0 }( 3j + 2k - 8 ) a_j \delta_{k + j , 3} z^{k+4} \right)=3z^2 ,\\
R_{-3}(z)&=\sum_{j=1}^{5} \left(\sum_{ 1-j \leq k < 0 }( 3j + 2k - 8 ) a_j \delta_{k + j  , 2} z^{k+4} \right)=2cz^3+z, \\
R_{-4}(z)&=\sum_{j=1}^{5} \left(\sum_{ 1-j \leq k < 0 }( 3j + 2k - 8 ) a_j \delta_{k + j , 1} z^{k+4} \right)=6cz^2-1.
\end{align*}
Thus 
\begin{align*}
P_{-1}(z)&=5z^{1/2}\sqrt{z^4-2cz^2+1}\int \frac{z^3}{2z^{3/2}(z^4-2cz^2+1)^{3/2}} \,dz \\
&= z^3+\frac{2 c z^5}{3}+\left(\frac{28 c^2}{39}-\frac{1}{13}\right) z^7+\left(\frac{616 c^3}{663}-\frac{196 c}{663}\right)
   z^9 \\
   &\hspace{4mm}+\frac{\left(6160 c^4-3388 c^2+153\right) z^{11}}{4641}+O\left(z^{13}\right),\\  \\
P_{-2}(z)&=3z^{1/2}\sqrt{z^4-2cz^2+1}\int \frac{z^2}{2z^{3/2}(z^4-2cz^2+1)^{3/2}} \,dz \\
&=z^2+\frac{2 c z^4}{7}+\left(\frac{20 c^2}{77}+\frac{1}{11}\right) z^6+\left(\frac{24 c^3}{77}+\frac{4 c}{77}\right)
   z^8 \\ 
   &\hspace{4mm}+\left(\frac{624 c^4}{1463}-\frac{36 c^2}{1463}-\frac{7}{209}\right) z^{10} +O\left(z^{12}\right),  \\  \\
P_{-3}(z)&=z^{1/2}\sqrt{z^4-2cz^2+1}\int \frac{2cz^3+z}{2z^{3/2}(z^4-2cz^2+1)^{3/2}} \,dz \\
&= z+\frac{z^5}{3}+\frac{14 c z^7}{39}+\left(\frac{308 c^2}{663}-\frac{5}{51}\right) z^9+\left(\frac{440
   c^3}{663}-\frac{1364 c}{4641}\right) z^{11}+O\left(z^{13}\right), \\  \\
P_{-4}(z)&=z^{1/2}\sqrt{z^4-2cz^2+1}\int \frac{6cz^2-1}{2z^{3/2}(z^4-2cz^2+1)^{3/2}} \,dz \\
&= 1+\frac{5 z^4}{7}+\frac{50 c z^6}{77}+\left(\frac{60 c^2}{77}-\frac{1}{7}\right) z^8+\frac{12 c \left(130 c^2-53\right)
   z^{10}}{1463}+O\left(z^{13}\right).  
\end{align*}
 Here the integrals are from $0$ to $z$.  
 
The polynomials 
$P_{k,i} = P_{k,i}(c)$ satisfy the recursion:
\begin{equation} 
(2k+7)P_{k,i}=-(2k-5)P_{k-4,i}+2c(2k+1)P_{k-2,i}
\end{equation}
for $k\geq 0$ with initial conditions
$P_{l,i}=\delta_{l,i}$, $-r \leq i,l\leq -1$.
\color{black}
We see that $P_{k,i}$ agree with the coefficients given above in the generating series. 

To get explicit generating formulae for the $Q_{k,i}$ (see \eqnref{recursion3.2-edited}), we have 
\begin{align*}
   S_{-1}(z)&=-\sum_{m=1}^{5} \left(  \sum_{j=1}^{m-1} (3j -2m+2)a_jQ_{m-j,-1}\right)z^{ m +5 }= -z^7+6cz^9, \\
     S_{-2}(z)&=-\sum_{m=1}^{5} \left(  \sum_{j=1}^{m-1} (3j -2m+2)a_jQ_{m-j,-2}\right)z^{ m + 5 }=z^8+2cz^{10}, \\
        S_{-3}(z)&=-\sum_{m=1}^{5} \left(  \sum_{j=1}^{m-1} (3j -2m+2)a_jQ_{m-j,-3}\right)z^{ m + 5 }=3z^9, \\
           S_{-4}(z)&=-\sum_{m=1}^{5} \left(  \sum_{j=1}^{m-1} (3j -2m+2)a_jQ_{m-j,-4}\right)z^{ m +5 }=5z^{10}, 
 \end{align*}
and thus 
\begin{align*}
Q_{-1}(z)&= z^{6}\sqrt{ z^5-2c z^3+z} \int \frac{-z^7+6cz^9}{2 z^{7} (z^5-2c z^3+z)^{3/2}} \,dz\\
&= z^6+\frac{5 z^{10}}{7}+\frac{50 c z^{12}}{77}+\left(\frac{60 c^2}{77}-\frac{1}{7}\right) z^{14}+\frac{12 c \left(130
   c^2-53\right) z^{16}}{1463}+O\left(z^{18}\right) ,\\  \\
Q_{-2}(z)&=z^{6}\sqrt{  z^5-2c z^3+z} \int \frac{z^8+2cz^{10}}{2 z^{7} (z^5-2c z^3+z)^{3/2}} \,dz\\
&= z^7+\frac{z^{11}}{3}+\frac{14 c z^{13}}{39}+\frac{1}{663} \left(308 c^2-65\right) z^{15}+\frac{44 c \left(70 c^2-31\right)
   z^{17}}{4641}+O\left(z^{19}\right),  \\  \\
Q_{-3}(z)&=z^{6}\sqrt{ z^5-2c z^3+z} \int \frac{3z^9}{2 z^{7} (z^5-2c z^3+z)^{3/2}} \,dz \\
&=z^8+\frac{2 c z^{10}}{7}+\frac{1}{77} \left(20 c^2+7\right) z^{12}+\frac{4}{77} \left(6 c^3+c\right)
   z^{14} \\
   &\hspace{4mm}+\frac{\left(624 c^4-36 c^2-49\right) z^{16}}{1463}+O\left(z^{17}\right) , \\  \\
Q_{-4}(z)&=z^{6}\sqrt{  z^5-2c z^3+z} \int \frac{5z^{10}}{2 z^{7} (z^5-2c z^3+z)^{3/2}} \,dz\\
&=z^9+\frac{2 c z^{11}}{3}+\frac{1}{39} \left(28 c^2-3\right) z^{13}+\frac{28}{663} c \left(22 c^2-7\right)
   z^{15} \\
   &\hspace{4mm}+\frac{\left(6160 c^4-3388 c^2+153\right) z^{17}}{4641}+O\left(z^{19}\right).  
\end{align*} 
The recurrence relation for $Q_{m,i}$ is \eqnref{recursion3.2-edited}: 
\begin{equation}
 Q_{m,i}  = \frac{1}{( 2m-3 )a_1}\left(\sum_{j=2}^{5 } ( 3j -2m)a_j Q_{ m - j+ 1 ,i}\right) = \frac{2c( 2m-9) Q_{ m -2 ,i}+(15-2m) Q_{ m -4 ,i}}{2m-3 } 
\end{equation}
for $m\geq 5$ and $Q_{m,i}=\delta_{m,-i}$, $1\leq m\leq 4$.  This agrees with the coefficients of the generating series given above for $Q_i(z)$. 

\subsection{Example} Let us take $p(t)=t^7-2bt^4+t$.     For this example, we limit ourselves to writing down just the first few terms of the generating series $P_{-1}(z)$.  The recursion relation for the $P_{k,i}$'s using \eqnref{recursion2} is 
\begin{equation}
(2k+9)P_{k,i}=-\sum_{j= 1 }^{6} (3j+2k-12)a_jP_{k-r+j-1,i}=4bk P_{k-3,i}-(2k-9)P_{k-6,i}
\end{equation}
for $k\geq 0$ with the initial condition
$P_{l,i}=\delta_{l,i}$, $-6\leq i,l\leq -1$. One can calculate by hand for example the first three nonzero nonconstant polynomials for $i=-1$, which are 
\begin{align*} P_{2,-1}=\frac{8 b}{13}, \quad P_{5,-1}=\frac{(-13 + 160 b^2)}{247},\quad  P_{8,-1}\frac{8 b (-37 + 
   128 b^2)}{1235}.
   \end{align*}
In this setting of $p(t)$, we have
$ R_{-1}(z)=7z^5$, $T(z)=z^6-2bz^3+1$ 
and as an example using Fa\`a di Bruno's formula and Bell polynomials, we get
\begin{align*}
P_{-1}(z)&=z^{3/2}\sqrt{z^6-2bz^3+1}\int \frac{7z^5}{2z^{5/2}(z^6-2bz^3+1)^{3/2}} \,dz \\
&=z^5+\frac{8 b z^8}{13}+\frac{1}{247} \left(160 b^2-13\right) z^{11}+\frac{8 b \left(128 b^2-37\right)
   z^{14}}{1235}+O\left(z^{17}\right).
\end{align*}
Note in the integral we take the constant of integration to be $0$.

\section{Lie algebra generators and relations for $\widehat{\mathfrak g\otimes R}$.}

Theorem~\ref{mainresult} is a generalization of the main theorem in  \cite{MR2373448}. 

\begin{theorem}\label{mainresult}  Let $a_1\neq 0$.   Let $\mathfrak g$ be a simple finite dimensional Lie algebra over the complex numbers with Killing form $(\,\cdot\,|\,\cdot \,)$ and for ${\bf a}=(a_1,\dots, a_{r})$ define $\psi_{ij}(\mathbf{a})\in\Omega_R^1/dR$ by
\begin{equation}
\psi_{ij}(\bf a)= 
% \color{blue}\cancel{j}\color{magenta}
\begin{cases} 
\displaystyle{\sum_{k=1}^{r}}  P_{i+j-1,-k} \omega_{k} &\quad \text{ if } i + j \geq -r +1,\\
\displaystyle{ \sum_{k=1}^{r}} Q_{-i-j+1,-k}\omega_{k} &\quad \text{ if } i + j < - r + 1.
 \\
\end{cases}
\end{equation} 
The universal central extension of the hyperelliptic Lie algebra $\mathfrak g \otimes R$ is the $\mathbb Z_2$-graded Lie algebra 
$$
\widehat{\mathfrak g}=\widehat{\mathfrak g}^0\oplus \widehat{\mathfrak g}^1,
$$
where
$$
\widehat{\mathfrak g}^0=\left(\mathfrak g\otimes \mathbb C[t,t^{-1}]\right)\oplus \mathbb C\omega_{0},\qquad \widehat{\mathfrak g}^1=\left(\mathfrak g\otimes \mathbb C[t,t^{-1}]u\right)\oplus\bigoplus_{k=1}^{r}\left( \mathbb C\omega_{k}\right)$$
with bracket
\begin{align}
[x\otimes t^i,y\otimes t^j]&=[x,y]\otimes t^{i+j}+\delta_{i+j,0}j(x,y)\omega_0, \label{even1}\\ \notag \\
[x\otimes t^{i}u,y\otimes t^{j}u]&=[x,y]\otimes t^{i+j}p(t)  +\sum_{k=1}^{r+1}\left(j+\frac{1}{2}k\right)a_k\delta_{i+j,-k}\omega_0,  \label{even2} \\ \notag \\
[x\otimes t^{i}u,y\otimes t^{j}]&=[x,y]u\otimes t^{i+j}u+ j(x,y)\psi_{ij}(\bf a). \label{odd}
\end{align}

\end{theorem}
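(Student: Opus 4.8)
The plan is to recognize the theorem as the explicit specialization of Kassel's description of the universal central extension, recalled above, to the ring $R=R_2(p)$, assembled from the basis of $\Omega_R^1/dR$ given by Bremner's theorem together with the cocycle computations already established in \lemref{evenlem} and \propref{oddlem}. By Kassel's theorem the universal central extension is $\widehat{\mathfrak g}=(\mathfrak g\otimes R)\oplus\Omega_R/dR$ with bracket $[x\otimes a,y\otimes b]=[x,y]\otimes ab+(x,y)\overline{a\,db}$ and $\Omega_R/dR$ central; because the universal central extension is unique, it suffices to exhibit the claimed vector space and check that Kassel's bracket takes the stated form on it. First I would record the two splittings that produce the grading: as a free $\mathbb C[t^{\pm1}]$-module one has $R=\mathbb C[t^{\pm1}]\oplus\mathbb C[t^{\pm1}]u$, giving $\mathfrak g\otimes R=(\mathfrak g\otimes\mathbb C[t,t^{-1}])\oplus(\mathfrak g\otimes\mathbb C[t,t^{-1}]u)$, while Bremner's theorem, the basis \eqnref{eqn:basis-in-tINVudt} named in \eqnref{eqn:basis-omega_sub_i_s}, gives $\Omega_R/dR=\mathbb C\omega_0\oplus\bigoplus_{k=1}^{r}\mathbb C\omega_k$.

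The $\mathbb Z_2$-grading is induced by the parity grading on $R$ in which $t^i$ is even and $t^iu$ is odd; this is a genuine grading precisely because $u^2=p(t)\in\mathbb C[t^{\pm1}]$ is even, and it descends to $\Omega_R^1/dR$ with $\omega_0=\overline{t^{-1}\,dt}$ even and each $\omega_k=\overline{t^{-k}u\,dt}$ odd. Placing the even summands into $\widehat{\mathfrak g}^0$ and the odd summands into $\widehat{\mathfrak g}^1$ yields exactly the stated decomposition. It then remains to compute Kassel's bracket on the spanning elements $x\otimes t^i$ and $x\otimes t^iu$, which by bilinearity determines the whole bracket. For two even elements, $(t^i)(t^j)=t^{i+j}$ and the first formula of \lemref{evenlem} give $\overline{t^i\,d(t^j)}=j\delta_{i+j,0}\omega_0$, producing \eqnref{even1}. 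For two odd elements, $u^2=p(t)$ gives $(t^iu)(t^ju)=t^{i+j}p(t)$, and the second formula of \lemref{evenlem} supplies the $\omega_0$-term, producing \eqnref{even2}. For a mixed pair, $(t^iu)(t^j)=t^{i+j}u$ and \propref{oddlem} give $\overline{t^iu\,d(t^j)}=j\psi_{ij}(\mathbf a)$, where $\psi_{ij}(\mathbf a)$ is precisely the piecewise expression in the $P_{k,i}$ and $Q_{k,i}$ appearing in the statement, producing \eqnref{odd}.

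Finally I would check that these brackets close within the graded pieces, which confirms the $\mathbb Z_2$-grading: \eqnref{even1} lands in $\widehat{\mathfrak g}^0$; \eqnref{even2} lands in $\widehat{\mathfrak g}^0$ because $p(t)$ is even and the cocycle contributes only $\omega_0$; and \eqnref{odd} lands in $\widehat{\mathfrak g}^1$ because $t^{i+j}u$ is odd and $\psi_{ij}(\mathbf a)$ is a combination of the odd generators $\omega_1,\dots,\omega_r$. I do not expect a serious obstacle, since \lemref{evenlem} and \propref{oddlem} carry essentially all of the computational weight; the only points demanding care are the bookkeeping of the two index regimes $i+j\geq-r+1$ and $i+j<-r+1$ in the mixed bracket so that the output matches the definition of $\psi_{ij}(\mathbf a)$ built from the recursions \eqnref{recursion2} and \eqnref{eq:recursion-relation-for-Qmi}, and verifying the compatibility of the parity grading with $u^2=p(t)$ so that \eqnref{even2} genuinely returns to the even subalgebra.
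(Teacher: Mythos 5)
Your proposal is correct and follows essentially the same route as the paper: the paper's proof simply invokes Kassel's description of the universal central extension and observes that \eqnref{even1} and \eqnref{even2} follow from \lemref{evenlem} while \eqnref{odd} follows from \propref{oddlem}. Your additional remarks on the parity grading induced by $u\mapsto -u$ and the closure of the bracket within the graded pieces are correct elaborations of points the paper leaves implicit.
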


\begin{proof}
The identities \eqnref{even1} and \eqnref{even2} follow from \lemref{evenlem} whereas \eqnref{odd} follows from Proposition~\ref{oddlem}. 
\end{proof}
\color{black}

\section{Automorphism group for $R=\mathbb C[t,t^{-1},u: u^2=p(t)=t(t-\alpha_1)\cdots (t-\alpha_{2n})]$.}\label{section:automorphism}

In this section, we restrict to the case of $r=2n$ which allows us to use the results in  \cite{MR3631928}, \cite{MR1223022} and \cite{MR2035219} on automorphism groups of such algebras. 
 \subsection{Automorphisms of $Z(\widehat{\mathfrak{g}})$ of the Current Algebra} 
Let $S_{2n}$ be the symmetry group on the finite set $\{1,2,\ldots, 2n \}$. 

First we recall some background material.   

\begin{theorem}[\cite{MR1223022} and \cite{MR2035219}] The automorphism group of a hyperelliptic curve $A=\mathbb C[X,Y:Y^2=P(X)]$ is isomorphic to one of the following groups:
$$
D_n,\mathbb Z_n,\mathbb V_n,\mathbb H_n,\mathbb G_n,\mathbb U_n,GL_2(3),W_2,W_3
$$
where
\begin{align*}
\mathbb V_n&:=\langle x,y : x^4,y^n,(xy)^2,(x^{-1}y)^2\rangle, \\
\mathbb H_n&:=\langle x,y : x^4,y^2x^2,(xy)^n\rangle ,\\
\mathbb G_n&:=\langle x,y : x^2y^n,y^{2n},x^{-1}yxy\rangle, \\
\mathbb U_n&:=\langle x,y : x^2,y^{2n},xyxy^{n+1} \rangle, \\
W_2&:=\langle x,y : x^4,y^3,yx^2y^{-1}x^2,(xy)^4\rangle, \\
W_3&:=\langle x,y : x^4,y^3,x^2(xy)^4,(xy)^8\rangle.
\end{align*}
\end{theorem}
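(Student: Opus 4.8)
The plan is to reduce the classification to the classical list of finite subgroups of $\mathrm{PGL}_2(\mathbb{C})$ together with a central-extension analysis. The starting point is that on the smooth projective model of a hyperelliptic curve $A=\mathbb{C}[X,Y:Y^2=P(X)]$ of genus $g\geq 2$, the hyperelliptic involution $\iota:(X,Y)\mapsto(X,-Y)$ is the \emph{unique} involution whose quotient has genus zero. Uniqueness forces $\iota$ to be fixed by conjugation, hence central in $G:=\mathrm{Aut}(A)$, so we obtain a central extension
\begin{equation*}
1 \longrightarrow \langle \iota \rangle \longrightarrow G \longrightarrow \overline{G} \longrightarrow 1,
\end{equation*}
where $\overline{G}=G/\langle\iota\rangle$ is the reduced automorphism group. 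All of the named groups should appear as the possible total groups $G$.

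First I would identify $\overline{G}$ as a finite subgroup of $\mathrm{PGL}_2(\mathbb{C})$. Since $A/\langle\iota\rangle\cong\mathbb{P}^1$ and $\iota$ is central, every automorphism of $A$ descends to an automorphism of the quotient, yielding an injection $\overline{G}\hookrightarrow\mathrm{Aut}(\mathbb{P}^1)=\mathrm{PGL}_2(\mathbb{C})$. I would then invoke the classical classification of finite subgroups of $\mathrm{PGL}_2(\mathbb{C})$: each is conjugate to a cyclic group $C_n$, a dihedral group $D_n$, or one of the polyhedral groups $A_4$, $S_4$, $A_5$. This pins $\overline{G}$ down to five families.

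Next I would impose the branch-locus constraint. The double cover $A\to\mathbb{P}^1$ is branched precisely over the $2g+2$ images of the Weierstrass points (the roots of $P$ together with $\infty$ when $\deg P$ is odd), and $\overline{G}$ must permute this $(2g+2)$-element subset of $\mathbb{P}^1$. After normalizing $\overline{G}$ to a standard copy, the branch set must be a union of $\overline{G}$-orbits, which severely restricts both $\overline{G}$ and the configuration of roots; the exceptional polyhedral cases survive only for special orbit sizes, and it is exactly these that force the appearance of $GL_2(3)$, $W_2$, and $W_3$ attached to the octahedral and icosahedral reduced groups.

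The heart of the argument, and the step I expect to be hardest, is the \emph{extension problem}: for each admissible $\overline{G}$ one must determine the isomorphism type of $G$, which is where the presentations $\mathbb{V}_n$, $\mathbb{H}_n$, $\mathbb{G}_n$, $\mathbb{U}_n$ arise as the distinct central-by-$\langle\iota\rangle$ extensions of the cyclic and dihedral cases. I would resolve this by lifting generators of $\overline{G}$ to $G$ and recording the relations: a generator of $\overline{G}$ of order $m$ lifts to an element of order $m$ or $2m$, and whether its $m$-th power equals $\iota$ or $1$ is governed by the local monodromy at the corresponding fixed points on $\mathbb{P}^1$ via Riemann--Hurwitz, together with the obstruction class in $H^2(\overline{G},\mathbb{Z}/2)$. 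Carrying out this bookkeeping case by case and matching the resulting generators-and-relations presentations to the listed groups completes the classification; the delicate part will be ruling out spurious non-split extensions and correctly separating, for instance, $\mathbb{V}_n$ from $\mathbb{H}_n$ and $\mathbb{G}_n$ from $\mathbb{U}_n$ in the dihedral case.
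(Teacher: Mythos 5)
The paper offers no proof of this statement: it is quoted, with attribution, from Bujalance--Gamboa--Gromadzki \cite{MR1223022} and Shaska \cite{MR2035219}, so there is no internal argument to compare yours against. That said, your strategy --- centrality of the hyperelliptic involution $\iota$, the resulting central extension $1\to\langle\iota\rangle\to G\to\overline{G}\to 1$ with the reduced group $\overline{G}$ embedding into $\mathrm{PGL}_2(\mathbb{C})$, Klein's classification of its finite subgroups into cyclic, dihedral and polyhedral types, the constraint that $\overline{G}$ permute the $2g+2$ branch points, and then a case-by-case resolution of the extension problem --- is exactly the architecture of the proofs in those references, so you have correctly identified how the theorem is established.

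The gap is that the architecture is essentially all you supply. The content of the theorem is precisely the output of the step you defer as ``bookkeeping'': which extensions of $C_n$, $D_n$, $A_4$, $S_4$, $A_5$ by $\langle\iota\rangle$ are actually realized by hyperelliptic curves, i.e., why the answer is exactly $\mathbb{Z}_n$, $D_n$, $\mathbb{V}_n$, $\mathbb{H}_n$, $\mathbb{G}_n$, $\mathbb{U}_n$, $GL_2(3)$, $W_2$, $W_3$ and nothing else. Deciding this requires more than the obstruction class in $H^2(\overline{G},\mathbb{Z}/2)$: for each admissible distribution of the $2g+2$ branch points among the orbits of $\overline{G}$ one must determine whether each lifted generator's relevant power equals $\iota$ or $1$, and then identify the resulting group by generators and relations; this is where the four distinct dihedral-type families are separated from one another and where the exceptional groups $GL_2(3)$, $W_2$, $W_3$ (as opposed to, say, split extensions) are pinned down, and none of that is carried out. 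Two further points deserve attention in a self-contained argument. First, the uniqueness and centrality of $\iota$ requires $g\geq 2$, which should be stated as a hypothesis. Second, the theorem as written concerns the affine coordinate ring $A$, whose automorphism group is a priori only the stabilizer, inside the automorphism group of the compact model, of the point(s) at infinity; your argument (like the cited references) classifies automorphisms of the compact Riemann surface, and the passage between the two is elided both by you and by the paper.
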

In \cite{MR2035219} a description of the reduced automorphism group is described for a given polynomial $P(X)$.  In our paper  we don't work with the reduced automorphism group and our coordinate ring is the localization $\mathbb C[t,t^{-1},u: u^2=p(t)=t(t-\alpha_1)\cdots (t-\alpha_{2n})]$ of $A$.

The result below describes the action of automorphisms of the algebra of the hyperelliptic curve $u^2=p(t)$.   The Theorem below corrects some errors that occur in \cite{MR3631928}, Corollary 15.  
 \begin{theorem}[Corollary 15, \cite{MR3631928}]\label{cor:group-actions}  % group action 
Let $p(t)=t(t-\alpha_1)\cdots (t-\alpha_{2n})$, where $\alpha_i$ are distinct nonzero roots.   Two possible types of automorphisms $\phi\in \Aut(R_2(p))$ of the algebra $R_2(p)$ are the following: 
 \begin{enumerate}
 \item\label{item:cor-cyclic-thm} If $\alpha_{\gamma(i)}=\zeta \alpha_i$ for some $2n$-th root of unity $\zeta$ and $\gamma\in S_{2n}$, then 
 \begin{equation}\label{eq:cyclic-Z2-case}
 \phi(t)=\zeta t=\xi^2t,\quad \phi(u)=\pm\xi^{2n+1}u=\pm\xi u
 \end{equation}
 where $\xi=\exp(2\pi r\imath/2k)$, $\xi^2=\zeta$ has order $k$ with  $k|2n$ and $r$ and $2k$ are relatively prime. Denote these automorphisms by $\phi_{\xi}^\pm $  which satisfy $(\phi_\xi^\pm)^{2k}=\text{id}$, $(\phi_\xi^+)^k=\phi_1^-$, and $(\phi_\xi^+)^j=\phi_{\xi^j}^+$ for all $j$.
 Consequently  $C_{2k}\cong \langle \phi_\xi^+\rangle$. 
 
%  \color{red} $(\phi_\xi^+)^k(u)= \zeta^{nk}\zeta^{k/2}u= \zeta^{k/2}u=-u$\color{black}
 \item\label{cor:group-action-dihedral-maps-thm} If there exists $\gamma\in S_{2n}$ and $c\in\mathbb C$ such that $\alpha_i \alpha_{\gamma(i)}=c^2$ for all $i$, then  $\phi(t)=\zeta t=\xi^2t$ and $\phi(u)=\pm\xi u$ ($\xi$ as above), and 
 $\psi(t)=c^2t^{-1}$ and 
\begin{equation}\label{psia}
 \psi(u)=\pm t^{-n-1}c^{n+1}u\quad \text{ if }a_1=\prod_{i=1}^{2n}\alpha_i=c^{2n},\tag{a}
\end{equation} 
 or 
\begin{equation}\label{psib}
 \psi(u)=\pm t^{-n-1}\imath c^{n+1}u\quad \text{ if }a_1=\prod_{i=1}^{2n}\alpha_i=-c^{2n}.\tag{b}
\end{equation}
Denote these automorphisms by $\psi_c^\pm$, respectively which satisfy  $(\psi_c^\pm)^2=\text{id}$ if $a_1=c^{2n}$ and $(\psi_c^-)^4=\text{id}$ but $(\psi_c^\pm)^2=\phi_1^-$,
if $a_1=-c^{2n}$.

 \end{enumerate}
For case (a) we have if $l=(2n)/k$ is even, then $\text{Aut}(R_2(p))=\langle \phi_\xi^+,\psi_c^+\rangle$ is isomorphic to $D_{2k}=\langle r,s : r^2=s^{2k}=(rs)^2=1\rangle$.
If $l=(2n)/k$ is odd, then  $\text{Aut}(R_2(p))=\langle \phi_\xi^+,\psi_c^+\rangle$ is isomorphic to $\mathbb U_k$. 

For (b) if $l=(2n)/k$ is odd, $\text{Aut}(R_2(p))=\langle \phi_\xi^+,\psi_c^+\rangle$ is isomorphic to $\mathbb V_{2k}$.  If $l$ is even, then $\text{Aut}(R_2(p))=\langle \phi_\xi^+,\psi_c^+\rangle$ is isomorphic to $\mathbb G_k=\text{Dic}_k$. 
 \end{theorem}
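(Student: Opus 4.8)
The plan is to analyze an arbitrary $\phi\in\Aut(R_2(p))$ through its action on the two generators, exploiting the $\mathbb Z_2$-grading $R_2(p)=\mathbb C[t^{\pm 1}]\oplus\mathbb C[t^{\pm 1}]u$. First I would determine the units: writing a general element as $f(t)+g(t)u$ and using the norm $N(f+gu)=f^2-g^2p(t)\in\mathbb C[t^{\pm 1}]$, or equivalently the fact that the only functions on the complete hyperelliptic curve with divisor supported on the two branch points over $t=0$ and $t=\infty$ are the powers of $t$ (since $P_0-P_\infty$ is nonzero $2$-torsion while $\mathrm{div}(t)=2(P_0-P_\infty)$), I would conclude that the unit group modulo $\mathbb C^*$ is the infinite cyclic group generated by $t$. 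Since $\phi$ permutes units, it acts on this $\mathbb Z$ by $\pm 1$, so $\phi(t)=\lambda t^{\pm 1}$; comparing odd parts in $\phi(u)^2=p(\phi(t))\in\mathbb C[t^{\pm 1}]$ then forces $\phi(u)=g(t)u$ with $g\in\mathbb C[t^{\pm 1}]$. This already separates the two asserted types according to whether $\phi$ fixes or swaps the punctures $t=0,\infty$.

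Next, for each type I would solve $g(t)^2=p(\phi(t))/p(t)$. In the scaling case $\phi(t)=\zeta t$, the rational function $p(\zeta t)/p(t)$ has simple zeros and poles at the translated and original roots, so its being a square in $\mathbb C[t^{\pm 1}]$ forces $\{\zeta^{-1}\alpha_i\}=\{\alpha_i\}$, i.e. $\alpha_{\gamma(i)}=\zeta\alpha_i$ for some $\gamma\in S_{2n}$; iterating along $\gamma$-orbits shows $\zeta$ is a root of unity of order $k\mid 2n$. A one-line computation then gives $p(\zeta t)=\zeta^{2n+1}p(t)$, whence $\phi(u)=\pm\xi^{2n+1}u$ with $\xi^2=\zeta$, recovering \eqref{eq:cyclic-Z2-case}. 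In the inversion case $\phi(t)=c^2t^{-1}$ the same squareness requirement forces $\alpha_i\alpha_{\gamma(i)}=c^2$, and one computes $p(c^2t^{-1})=a_1c^2t^{-2n-2}p(t)$ with $a_1=\prod_i\alpha_i$; taking the square root and using $\sqrt{a_1}=c^n$ when $a_1=c^{2n}$ and $\sqrt{a_1}=\imath c^n$ when $a_1=-c^{2n}$ produces exactly \eqref{psia} and \eqref{psib}, the factor $\imath$ being the only difference between the two subcases.

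With the explicit formulas in hand, the remaining work is to read off the group structure by direct substitution, tracking the $t$- and $u$-components separately. On the $t$-line the maps are M\"obius transformations, so $\langle\phi_\xi^+\rangle\cong C_{2k}$, the power $(\phi_\xi^+)^k$ fixes $t$, and $\psi_c\phi_\xi\psi_c^{-1}$ agrees with $(\phi_\xi)^{-1}$ on $t$ automatically; all of the subtlety lives in the $u$-components. I would verify $\xi^k=-1$ (using $\gcd(r,2k)=1$) to obtain $(\phi_\xi^+)^k=\phi_1^-$, compute $(\psi_c^+)^2$ in case (a) and $(\psi_c^-)^2,(\psi_c^-)^4$ in case (b), and then determine the sign appearing in the $u$-component of $\psi_c\phi_\xi\psi_c^{-1}$, which turns out to depend on the parity of $l=2n/k$. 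Since $\langle\phi_\xi^+,\psi_c^+\rangle$ has order $4k$ and each candidate group $D_{2k}$, $\mathbb U_k$, $\mathbb V_{2k}$, $\mathrm{Dic}_k$ also has order $4k$, it suffices to check that the concrete maps satisfy the defining relations of the appropriate presentation; matching then yields $D_{2k}$ versus $\mathbb U_k$ according as $l$ is even or odd when $a_1=c^{2n}$, and $\mathbb V_{2k}$ versus $\mathrm{Dic}_k$ according as $l$ is odd or even when $a_1=-c^{2n}$.

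I expect this last step to be the main obstacle. The geometry and the explicit $u$-formulas are routine, but distinguishing the dihedral group from $\mathbb U_k$, and the dicyclic group from $\mathbb V_{2k}$, rests entirely on the precise signs in the $u$-component of $(\psi_c)^2$ and of the conjugation relation, and these depend in a delicate way on the parity of $l$ and on the sign of $a_1$ relative to $c^{2n}$. This is exactly where the cited Corollary~15 contains errors, so each parity case has to be worked out explicitly rather than quoted.
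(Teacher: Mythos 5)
Your proposal follows essentially the same route as the paper's proof: classify $\phi(t)$ as $\zeta t$ or $c^2t^{-1}$ via the unit group $\mathbb C^*\{t^a : a\in\mathbb Z\}$, solve $\phi(u)^2=p(\phi(t))$ to obtain the explicit formulas (with the cycle analysis of $\gamma$ forcing $k\mid 2n$), and then identify $\langle\phi_\xi^+,\psi_c^+\rangle$ by direct verification of the defining relations, the parity of $l=2n/k$ entering through the sign in the $u$-component of $\psi_c^+\phi_\xi^+(\psi_c^+)^{-1}$ exactly as in the paper. The only substantive difference is that you supply a justification (norm/divisor argument) for the description of the unit group, which the paper simply asserts.
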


 \begin{proof}  Part \eqref{item:cor-cyclic-thm}.
 Let $\phi$ be an automorphism of $R_2(p)$.  Then since the group of units of $R_2(p)$ is $\mathbb C^*  \{t^a : a\in\mathbb Z\}$, we know either $\phi(t)=\zeta t$ for some $\zeta\in\mathbb C^*$ or $\phi(t)=c^2/t$ for some $c\in\mathbb C^*$.  In the first case we have
 \begin{align*}
 \phi(p(t))=\zeta t(\zeta t-\alpha_1)\cdots (\zeta t-\alpha_{2n})=\zeta^{2n+1} t(t-\zeta^{-1}\alpha_1)\cdots (t-\zeta^{-1}\alpha_{2n})=f^2p(t)
 \end{align*}
 as one can show $\phi(u)=fu$ for some $f\in \mathbb C^*\{t^k :  k\in\mathbb Z\}$. 

 Since the $\alpha_i/\zeta$ are distinct we must have that there exists $\gamma\in S_{2n}$ such that $\alpha_{\gamma(i)}=\zeta \alpha_i$ for all $1\leq i\leq 2n$.  Then $\alpha_{\gamma^a(i)}=\zeta\alpha_{\gamma^{a-1}(i)}=\zeta^a\alpha_{i}$.  Suppose $\gamma$ is a product of disjoint cycles $ (c_{i_1},\dots, c_{i_k})$ with $i$ in $\{c_{i_1},\dots ,c_{i_k}\}$.   Then $\alpha_i=\alpha_{\gamma^k(i)}=\zeta^k\alpha_{i}$ for some $k\geq 2$ and $\zeta^k=1$ with $k$ minimal and $\zeta=\exp(2\pi \imath r/k)$ where $r$ is relatively prime to $k$.  Suppose $(d_{i_1},\dots, d_{i_q})$ is an $q$-cycle appearing in $\gamma$. Then $\zeta^l=1$ as well, so $k|q$.  Now 
$$
\alpha_{d_{i_1}},  
\alpha_{d_{i_2}}=\alpha_{\gamma(d_{i_1})}=\zeta \alpha_{d_{i_1}}, 
\dots,  
\alpha_{d_{i_q}}=\zeta^{q}\alpha_{d_{i_1}}
$$ 
are supposed to be distinct. So $\zeta$ is also a $q$-th primitive root of unity, which implies that $q|k$. Thus $q=k$. So
 $\gamma$ is a product of $k$-cycles, and  $k|2n$ 
since $p(t)/t$ has only $2n$ distinct roots.   After reordering the indices we may assume $r=1$. 

In addition  $f^2=\zeta^{2n+1}=\zeta=  \xi^2$ and hence $f=\pm \xi$.   Now $\xi^2=\zeta=\exp(2\pi\imath/k)=(\exp(2\pi\imath/2k))^2$ so that $\xi=\pm \exp(2\pi\imath/2k)$.  We can replace $\xi$ by $-\xi$ in \eqnref{eq:cyclic-Z2-case} if necessary so as to assume $\xi=\exp(2\pi\imath/2k)$. Keep in mind below the fact that $\xi^k=\exp(\pi\imath )=-1$.

It is also easy to check $(\phi_\xi^+)^j=\phi_{\xi^j}^+$.  Let us point out in particular
\begin{equation}
(\phi_\xi^+)^{-1}= \phi_{\xi^{2k-1}}^+=(\phi_\xi^+)^{2k-1}.
\end{equation}

Note also the following
  $$
\phi_{\xi^{k}}^+(t)=(\phi_{\xi}^+)^{k}(t)=\xi^{2k}t=t=\phi_1^-(t),\quad  \phi_{\xi^{k}}^+(u)=(\phi_{\xi}^+)^k(u)=\xi^ku=-u=\phi_1^-(u).
$$
We can thus write
$$
\phi_{\xi^a}^-(t)=\xi^{2a}t=\phi_1^-(\phi_\xi^+)^a(t)=(\phi_\xi^+)^{k+a}(t),\quad \phi_{\xi^a}^-(u)=-\xi^{a}u=\phi_1^-(\phi_\xi^+)^a(u)=(\phi_\xi^+)^{k+a}(u).
$$
Consequently all of the automorphisms of the first type are in the subgroup generated by $\phi_\xi^+$ and this subgroup of $\text{Aut}(R)$ in turn generates a group isomorphic to $C_{2k}$. 
% , but if $k|2$, then $k|n$ as well for $n$ even. 

%Indeed
%\begin{align*}
%\phi_{\xi^{2k-1}}^+(t)&=(\phi_{\xi}^+)^{2k-1}(t)=\xi^{2k-1}t=\xi^{-1}t=(\phi_{\xi}^+)^{-1}(t), \\
%  \phi_{\xi^{2k-1}}^+(u)&=(\phi_{\xi}^+)^{2k-1}(u)=\xi^{2k-1}u=\xi^{-1}u=(\phi_\xi^+)^{-1}(u).
%\end{align*}
%

We know $kl=2n$ for some positive integer $l$.

Part \eqref{cor:group-action-dihedral-maps-thm}. In the case \eqnref{psia} $c^{2n}=\prod_{i=1}^l\prod_{j=0}^k\zeta^j\alpha_i$,  we have
\begin{align*}
(\psi_c^\pm)^2(t)&=c^2\psi_c^\pm(t^{-1})=t,\quad (\psi_c^\pm)^2(u)=\pm c^{n+1}\psi_c^\pm(t^{-n-1}u)=c^{n+1}c^{-2n-2}t^{n+1}c^{n+1} t^{-n-1}u=u. 
\end{align*}
Then $(\psi_c^\pm)^2=\text{id}$. 

Moreover we have 
\begin{align*}
\psi_c^+\phi_{\xi}^+(\psi_c^+)^{-1}(u) 
&= \psi_c^+\phi_{\xi}^+ \psi_c^+ (u) \\ 
&= \psi_c^+\phi_{\xi}^+ (t^{-n-1} c^{n+1}u )\\ 
&= \psi_c^+(\xi^{-2n-2}t^{-n-1}c^{n+1}\xi u) \\ 
&=(-1)^l \xi^{-1} c^{-2n-2}t^{n+1}c^{n+1}t^{-n-1}c^{n+1}u\\
&= (-1)^l \xi^{-1}u. 
\end{align*}
If $l$ is even (for instance if $k|n$) we conclude
$$
\psi_c^+\phi_{\xi}^+(\psi_c^+)^{-1}(u)=(\phi_{\xi }^{+})^{-1}(u).
$$ 
%If $k|n$, then $\zeta^n=1$ and 
%$$
%(\phi_\xi^+)^{-1}(u)=\zeta^{\frac{2n+1}{2}(2k-1)}u=\zeta^{k(2n+1)}\zeta^{-\frac{2n+1}{2}}u=\zeta^{-n-\frac{1}{2}}u=\zeta^{-\frac{1}{2}}u.
%$$ 

Furthermore,  
\begin{align*} 
\psi_c^+\phi_{\xi}^+(\psi_c^+)^{-1}(t) & = \psi_c^+\phi_{\xi}^+\psi_c^+(t)  = \psi_c^+\phi_{\xi}^+( c^2t^{-1})\\
&= \psi_c^+(c^2 \xi^{-2}t^{-1}) = c^2\xi^{-2}c^{-2}t = \xi^{-2}t \\
&= (\phi_{\xi}^{+})^{-1}(t)  .
\end{align*} 
Finally note 
$$
\psi_c^-(u)=-c^{n+1}t^{-n-1}u=\phi_1^-\psi_c^+(u),\quad \psi_c^-(t)=c^2/t=\phi_1^-\psi_c^+(t)
$$
so that  for $r=\psi_c^+$,  and $s=\phi_{\xi}^+$ we have $\psi_c^-\in \langle r,s \rangle$. 
In conclusion we have for case (a) with $l$ even,  $r=\psi_c^+$ has order $2$ and $s=\phi_{\xi}^+$ has order $2k$, so they generate the dihedral group
$D_{2k}=\langle r,s : r^2=s^{2k}=(rs)^2=1\rangle$. 

If $l$ is odd, then 
\begin{align*}
 \psi_c^+\phi_{\xi}^+ (\psi_c^+)^{-1} (u) &= -\xi^{-1}u=\phi_\xi^{k-1}(u),\quad   \psi_c^+\phi_{\xi}^+ (\psi_c^+)^{-1} (t)= \psi_c^+\phi_{\xi}^+   (c^{-2}t^{-1}) = \xi^{-2}t=\phi_\xi^{k-1}(t).
\end{align*}
Thus $ \psi_c^+\phi_{\xi}^+ (\psi_c^+)^{-1}= (\phi_{\xi}^+)^{k-1}$ and hence $\psi_c^+\phi_{\xi}^+ \psi_c^+(\phi_\xi^+)^{k+1}=\text{id}$ so  $\langle \phi_\xi^+\rangle$ is a normal subgroup of $\text{Aut}(R_2(p))$ and $\text{Aut}(R_2(p))=\langle \phi_\xi^+,\psi_c^+\rangle\cong \mathbb U_k$.

%If $l$ is odd, then $k$ is even and 
%$$
%\psi_c^+\phi_{\xi}^+(\psi_c^+)^{-1}(u)= (-1)^l \xi^{-1}u= -\xi^{-1}u=(\phi_{\xi }^{+})^{k-1}(u).
%$$ 
%so that 
%$$
%\psi_c^+(\phi_{\xi}^+)^2(\psi_c^+)^{-1}(u)=(\phi_{\xi }^{+})^{2k-2}(u)=(\phi_\xi^+)^{-2}(u).
%$$ 
%Thus 
%
%Similarly if $-c^n=\prod_{i=1}^l\alpha_i^k$, then 
%\begin{align*}
%(\psi_c^\pm)^2(t)& =t,\quad (\psi_c^\pm)^2(u)=\pm(-c)^{\frac{n+1}{2}}\psi_c^\pm(t^{-n-1}u)=(-c)^{\frac{n+1}{2}}c^{-n-1}t^{n+1}(-c)^{\frac{n+1}{2}} t^{-n-1}u=-u.
%\end{align*}
%
%Thus $(\psi_c^\pm)^2=\phi_1^-$. 
In the case \eqnref{psib} $-c^{2n}=\prod_{i=1}^l\prod_{j=1}^k\zeta^j\alpha_i^k$,  we have
\begin{align*}
(\psi_c^\pm)^2(t)&= c^2\psi_c^\pm(t^{-1})=t, \\
 (\psi_c^\pm)^2(u)&=\pm \imath c^{n+1}\psi_c^\pm(t^{-n-1}u)=\imath c^{n+1}c^{-2n-2}t^{n+1}\imath c^{n+1} t^{-n-1}u=-u. 
\end{align*}
Then  $(\psi_c^\pm)^2=\phi_1^-$.

Moreover we have 
\begin{align*}
 \psi_c^+\phi_{\xi}^+ (\psi_c^+)^{-1} (u) &= -\psi_c^+\phi_{\xi}^+ \psi_c^+ (u)  \\
&= -\psi_c^+\phi_{\xi}^+ (t^{-n-1}\imath c^{n+1}u )\\ 
&=- \psi_c^+(\xi^{-2n-2}t^{-n-1}\imath c^{n+1}\xi u) \\ 
&=(-1)^{l+1}\imath \xi^{-1} c^{-2n-2}t^{n+1} c^{n+1}t^{-n-1}\imath c^{n+1}u\\
&= (-1)^l \xi^{-1}u,
\end{align*}
as $\xi^{2n}=\xi^{kl}=(-1)^l$. 
Now if $l$ is odd we conclude
$$
\psi_c^+\phi_{\xi}^+(\psi_c^+)^{-1}(u)=(\phi_{\xi }^{+})^{k-1}(u).
$$ 
In addition 
\begin{align*}
 \psi_c^+\phi_{\xi}^+ (\psi_c^+)^{-1} (t) &= \psi_c^+\phi_{\xi}^+ \psi_c^+ (t)  \\
&= \psi_c^+\phi_{\xi}^+ (c^2/t )\\ 
&= \psi_c^+(c^2\xi^{-2}t^{-1}) \\ 
&=c^2\xi^{-2}c^{-2}t\\
&= \xi^{-2}t=(\phi_\xi^+)^{k-1}(t). 
\end{align*}
Thus
$$
\psi_c^+\phi_{\xi}^+ (\psi_c^+)^{-1} =(\phi_{\xi}^+ )^{k-1}.
$$
Note that this means
$$
\psi_c^+\phi_\xi^+\psi_c^+=\psi_c^+\phi_\xi^+(\psi_c^+)^{-3}=\psi_c^+\phi_\xi^+(\psi_c^+)^{-1}\phi_1^-=(\phi_{\xi}^+ )^{k-1}\phi_1^-=\phi_\xi^{-1}
$$
so that $(\psi_c^+\phi_\xi^+)^2=1$.
Similarly 
 $$
(\psi_c^+)^{-1}\phi_\xi^+(\psi_c^+)^{-1}\phi_\xi^+=(\psi_c^+)^3\phi_\xi^+(\psi_c^+)^{3}\phi_\xi^+=1$$
as $(\psi_c^+)^2=\phi_1^-$.  

 We conclude in the case that $l$ is odd that  
$$
 (\phi_\xi^+)^2=(\phi_\xi^+)^{k},\quad (\phi^+_\xi)^{2k}=\text{id},\quad  (\psi_c^+\phi_{\xi}^+)^2 =1=  ((\psi_c^+)^{-1}\phi_{\xi}^+)^2 $$
and 
$$
\langle \phi_\xi^+,\psi_c^+\rangle \cong \mathbb V_{2k}.
$$

%\begin{align*}
% (\psi_c^+)^{-1} \phi_{\xi}^+\psi_c^+(u)  &= (\psi_c^+)^3\phi_{\xi}^+ \psi_c^+(u) \\
%&=  ( \psi_c^+)^2\psi_c^+\phi_{\xi}^+(t^{-n-1}(\imath c)^{n+1}u )\\ 
%&=\phi_1^{-} \psi_c^+\phi_{\xi}^+ (t^{-n-1}(\imath c)^{n+1}u )\\ 
%&= \phi_1^{-}\psi_c^+\phi_{\xi}^+ (t^{-n-1}(\imath c)^{n+1}u )\\ 
%&=\phi_1^{-}\psi_c^+(\xi^{-2n-2}t^{-n-1}(\imath c)^{n+1}\xi u) \\ 
%&=(-1)^{l+1} \xi^{-1} (\imath c)^{-2n-2}t^{n+1}(\imath c)^{n+1}t^{-n-1}(\imath c)^{n+1}u\\
%&= (-1)^{l+1} \xi^{-1}u. 
%\end{align*}
%
When $l$ is even we get 
$$
 (\phi_\xi^+)^2=(\phi_\xi^+)^{k},\quad (\phi^+_\xi)^{2k}=\text{id},\quad  \psi_c^+\phi_{\xi}^+ (\psi_c^+)^{-1} =(\phi_{\xi}^+ )^{-1}
$$
and
$$
\langle \phi_\xi^+,\psi_c^+\rangle \cong \mathbb G_k=\text{Dic}_k.
$$

  \end{proof}
 
 \begin{remark}
In the above cited paper we wrote $(\psi_c^\pm)^2=\text{id}$ but this was in error in case (b) as $\psi_c^\pm$ has order $4$.   Observe also $\phi^-_\xi=\phi_1^-\phi_\xi^+$. 
 \end{remark}

We add to this another 
 \begin{corollary}  \label{polycoefficients}
 Let $p(t)=t(t-\alpha_1)\cdots (t-\alpha_{2n})$, where $\alpha_i$ are distinct roots.   Two possible types of automorphisms $\phi\in \Aut(R_2(p))$ of the algebra $R_2(p)$ are the following: 
 \begin{enumerate}
 \item\label{item:cor-cyclic} If $\alpha_{\gamma(i)}=\zeta \alpha_i$ for some $2n$-th root of unity $\zeta$ and $\gamma\in S_{2n}$, then 
 \begin{equation}
 \phi(t)=\zeta t,\quad \phi(u)=\pm\xi u, 
 \end{equation}
 where we can take $\xi=\zeta^{1/2}=\exp(2\pi\imath/2k)$ with $\zeta$ having order $k$ and  $k|2n$. 
 It follows that $\phi$ has order $2k$. 
 In particular, after a change in indices
\begin{equation}\label{eqn:dihedral-setting-p-in-k-and-n}
 \begin{split}
 p(t) &=
t(t-\alpha_1)(t-\zeta\alpha_1)\cdots (t-\zeta^{k-1}\alpha_1) \cdots (t-\alpha_{2n/k})\cdots (t-\zeta^{k-1}\alpha_{2n/k})\\
&=t(t^k-\alpha_1^k)(t^k-\alpha_2^k)\cdots(t^k-\alpha_{2n/k}^k)    \\
&=\sum_{q=0}^{\frac{2n}{k}}(-1)^qe_q(\alpha_1^k,\dots, \alpha_{2n/k}^k)t^{2n-qk+1}, \\ 
\end{split}
\end{equation}
where $e_q(x_1,x_2,\dots, x_{2n/k})$ is the elementary symmetric polynomial of degree $q$ in $x_1,\dots ,x_{2n/k}$:
\begin{align*}
e_q(x_1,x_2,\dots, x_{2n/k})&=\sum_{1\leq j_1<j_2<\cdots <j_q\leq 2n/k}x_{j_1} x_{j_2} \cdots x_{j_q}.
\end{align*} 
In this case $\langle \phi_\xi^+\rangle\cong C_{2k}$. 
 
 %  \color{red} $(\phi_\xi^+)^k(u)= \zeta^{nk}\zeta^{k/2}u= \zeta^{k/2}u=-u$\color{black}
 \item\label{cor:group-action-dihedral-maps} If in addition to the above, there exists $\beta\in S_{2n}$ such that $\alpha_i \alpha_{\beta(i)}=c^2$ for all $i$, then  $\phi_\xi^\pm(t)=\zeta t$ and $\phi_\xi^\pm(u)=\pm\xi u$, and 
 $\psi(t)=c^2t^{-1}$ and 
\begin{equation}\label{pma}
 \psi_c^\pm( u)=\pm t^{-n-1} c^{n+1}u \quad \text{ if }a_1=\prod_{i=1}^{2n}\alpha_i=c^{2n},\tag{a}
 \end{equation}
 or 
\begin{equation}\label{pmb}
 \psi_c^\pm(u)=\pm t^{-n-1}\imath c^{n+1}u\quad \text{ if }a_1=\prod_{i=1}^{2n}\alpha_i=-c^{2n}.\tag{b}
\end{equation} 
In this case
\begin{align*}
 p(t)&=\sum_{r=1}^{2n+1}a_rt^r,
\end{align*}
where 
\begin{equation}\label{coefficientsymmetry}
a_k=\pm c^{2n-2k+2}a_{2n+2-k} 
\end{equation}
for $k=1,\dots, 2n+1$.  Here the $\pm$  in \eqnref{coefficientsymmetry} corresponds to the $\pm$ in $a_1=\pm c^{2n}$.

 \end{enumerate}

 \end{corollary}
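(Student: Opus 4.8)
The plan is to dispatch the two parts separately, in each case building directly on \thmref{cor:group-actions} so that the only real work is translating the automorphism data into a statement about the coefficients of $p$.

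For part \eqref{item:cor-cyclic}, the opening sentence merely restates facts already recorded in \thmref{cor:group-actions}, so the genuinely new content is the factorization of $p(t)$. First I would recall the orbit analysis from the proof of \thmref{cor:group-actions}: because $\phi(t)=\zeta t$ is an automorphism and the roots $\alpha_1,\dots,\alpha_{2n}$ of $p(t)/t$ are distinct, multiplication by $\zeta$ permutes them, and since $\zeta$ has order $k$ every orbit has exactly $k$ elements, giving $2n/k$ orbits. After reindexing so that orbit representatives are $\alpha_1,\dots,\alpha_{2n/k}$, each orbit is $\{\alpha_j,\zeta\alpha_j,\dots,\zeta^{k-1}\alpha_j\}$, and I would invoke the identity $\prod_{m=0}^{k-1}(t-\zeta^m\alpha_j)=t^k-\alpha_j^k$, valid because the $\zeta^m\alpha_j$ are precisely the $k$ distinct roots of $X^k=\alpha_j^k$. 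This yields $p(t)=t\prod_{j=1}^{2n/k}(t^k-\alpha_j^k)$; expanding the product as $\sum_{q}(-1)^q e_q(\alpha_1^k,\dots,\alpha_{2n/k}^k)(t^k)^{2n/k-q}$ and multiplying by $t$ produces exactly the displayed formula.

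For part \eqref{cor:group-action-dihedral-maps}, the decisive step is to turn the statement that $\psi_c^\pm$ is an automorphism into a functional equation for $p$. Since $\psi_c^\pm$ fixes the relation $u^2=p(t)$, I would compute $\psi_c^\pm(u)^2=\psi_c^\pm(p(t))=p(c^2/t)$ on one hand, and on the other $\psi_c^\pm(u)^2=(\pm t^{-n-1}c^{n+1}u)^2=c^{2n+2}t^{-2n-2}p(t)$ in case (a), respectively $(\pm t^{-n-1}\imath c^{n+1}u)^2=-c^{2n+2}t^{-2n-2}p(t)$ in case (b). Equating the two expressions gives the single relation
\begin{equation}
p(c^2/t)=\pm c^{2n+2}t^{-2n-2}p(t),
\end{equation}
where the sign is $+$ in case (a) and $-$ in case (b), which is exactly the sign of $a_1=\pm c^{2n}$.

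The coefficient symmetry is then pure bookkeeping. Substituting $p(t)=\sum_{r=1}^{2n+1}a_rt^r$, the left side equals $\sum_{r}a_rc^{2r}t^{-r}$, while the right side, after the reindexing $s=2n+2-r$, equals $\pm c^{2n+2}\sum_{s}a_{2n+2-s}t^{-s}$; matching coefficients of $t^{-k}$ gives $a_kc^{2k}=\pm c^{2n+2}a_{2n+2-k}$, hence $a_k=\pm c^{2n-2k+2}a_{2n+2-k}$. As an internal check, $k=1$ recovers $a_1=\pm c^{2n}a_{2n+1}=\pm c^{2n}$, consistent with the hypothesis. I expect the main obstacle to be nothing deep but rather the careful derivation of the functional equation and, in particular, getting its sign right in the two cases; once that relation is established, both the factorization in part \eqref{item:cor-cyclic} and the coefficient relation in part \eqref{cor:group-action-dihedral-maps} follow from elementary polynomial algebra.
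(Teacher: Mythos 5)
Your proposal is correct and follows essentially the same route as the paper: part (1) is the orbit factorization $p(t)=t\prod_j(t^k-\alpha_j^k)$ expanded via elementary symmetric polynomials, and part (2) equates $\psi_c^\pm(p(t))=p(c^2/t)$ with $\psi_c^\pm(u)^2=\pm c^{2n+2}t^{-2n-2}p(t)$ and matches coefficients, exactly as the paper does. The sign bookkeeping and the consistency check at $k=1$ are both right.
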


 \begin{proof}
Case \eqref{item:theorem-cyclic-decomp}.   Thus after a renaming of the indices we may assume $r=1$ and we may write
\begin{align*}
 p(t)&=
t(t-\alpha_1)(t-\zeta\alpha_1)\cdots (t-\zeta^{k-1}\alpha_1)(t-\alpha_2)(t-\zeta\alpha_2)\cdots (t-\zeta^{k-1}\alpha_2)\cdots \\ 
&\hspace{8mm}\cdots (t-\alpha_{2n/k})\cdots (t-\zeta^{k-1}\alpha_{2n/k})\\
&=t(t^k-\alpha_1^k)(t^k-\alpha_2^k)\cdots(t^k-\alpha_{2n/k}^k)    \\
&=\sum_{q=0}^{2n/k}(-1)^qe_q(\alpha_1^k,\dots, \alpha_{2n/k}^k)t^{2n-qk+1}, 
\end{align*}
where $e_q(x_1,\dots, x_{2n/k})$ are the elementary symmetric polynomial of degree $q$ in $x_1,\dots ,x_{2n/k}$
\begin{align*}
e_q(x_1,\dots, x_{2n/k})&=\sum_{1\leq j_1<j_2<\ldots <j_q\leq 2n/k}x_{j_1}\cdots x_{j_q}.
\end{align*}

For the second part we know $C_{2k}\cong \langle \phi_\xi^+\rangle \subseteq \text{Aut}(R_2(p))$ for some $k|2n$ and we have
\begin{align*}
\psi_c^\pm(p(t))=\sum_{j=1}^{2n+1}a_jc^{2j}t^{-j}=t^{-2n-2}\sum_{j=1}^{2n+1}a_jc^{2j}t^{2n+2-j}=t^{-2n-2}\sum_{q=1}^{2n+1}a_{2n+2-q}c^{4n+4-2q}t^{q},
\end{align*}
which we require to satisfy
\begin{align*}
\psi_c^\pm(u^2)&=\psi_c^\pm(p(t))=t^{-2n-2}\sum_{q=1}^{2n+1}a_{2n+2-q}c^{4n+4-2q}t^{q}\\
&=\psi_c^\pm(u)^2=\pm c^{2n+2}t^{-2n-2}p(t)=\pm c^{2n+2}t^{-2n-2}\left(\sum_{j=1}^{2n+1}a_j t^j\right) .\end{align*}
As a consequence (since $c\neq 0$), one has 
$$
a_j=\pm c^{2n-2j+2}a_{2n+2-j},
$$
for $j=1,\dots, 2n+1$.   Here $+$ is taken for case \eqref{pma} and $-$ for case \eqref{pmb}.

\end{proof}

\begin{remark} In the dihedral case with the roots 
$ \zeta^{r}\alpha_i$, where $0\leq r\leq l-1$ and $i=1,\dots , k$ 
so that $kl=2n$, we simplify $|c^{2n}|$ to obtain 
$$
|c^{2n}|=\Big|\prod_{i=1}^{k}\alpha_{i}\Big|^l=|a_1|, 
$$
and thus 
$c^2=\omega \displaystyle{\root n\of {\Big|\prod_{i=1}^{k}\alpha_{i}\Big|^l}}$, where $\omega^n=1$. 

For example if $\alpha_1=1$, $\alpha_2=1+2\imath$, $\alpha_3=1+3\imath$, $l=4$, $n=6$, and $k=3$, then
for any $\gamma\in S_{2n}$,
\begin{align*}
|c^2|=|\alpha_1\alpha_{\gamma(1)}|
&=1\text{ or }\sqrt{5}\text{ or }\sqrt{10}.
\end{align*}
Now for the dihedral group we would also have
$$
 |c^2|=\root n\of {\Big|\prod_{i=1}^{k}\alpha_{i}\Big|^l}= \root 6\of {\sqrt{50}^4}.
$$
But then $ \root 6\of {\sqrt{50}^4}\neq |\alpha_1\alpha_{\gamma(1)}|$ for any $\gamma$.  As a consequence, the automorphism group is $C_{6}$, and not $D_6$. 
\end{remark}
%Combining Cox's module decomposition paper and this paper [\color{red}pick a notation and stay consistent in this section; use $\phi$ (rotations of order $r=2n$) and $\psi$ (reflections of order $2$)\color{black}]:  
% 
%Note: $\tau_{\zeta}^+=\phi_{\zeta^j}^+$, $\tau_1^-=\phi_1^-$
%
%Note: $\sigma_c^+ =\psi_c^+$, $\tau_{\zeta}^+ = \phi_{\zeta}^+$, $\tau_1^-=\phi$
%
%Note: $\phi(u)=\tau_1^-(u)= -\zeta^{n+\frac{1}{2}}u$.

%We denote by $\tau_{\zeta}^{\pm}$  and $\sigma_c^{\pm}$ the Lie algebra automorphisms corresponding to the associative algebra automorphisms $\phi_{\zeta}^{\pm}$ and $\psi_{c}^{\pm}$, respectively, in Corollary~\ref{cor:group-actions} (if they exist).  

From \cite{MR966871}, we know that for any
automorphism $\phi$ of the associative algebra $R_2(p)$, one obtains
an automorphism $\tau$ of the Lie algebra $\mathcal{R}_2(p):=\text{Der}(R_2(p))$ through the equation
\begin{equation}\label{eq:Liealgauto}
\tau(f(t)\partial)=\phi(f)(\phi\circ\partial\circ \phi^{-1})\enspace \mbox{ for all } f \in R_2(p).
\end{equation}
In addition, any Lie algebra automorphism of
$\mathcal{R}_2(p)$ can be obtained from \eqnref{eq:Liealgauto}. Denote by
$\tau_{\zeta}^{\pm}$ and $\sigma_{c}^{\pm}$ the Lie algebra
automorphisms corresponding to the associative algebra automorphisms
$\phi_{\zeta}^{\pm}$ and $\psi_{c}^{\pm}$ in \thmref{cor:group-actions}
\eqref{item:cor-cyclic-thm} and \eqref{cor:group-action-dihedral-maps-thm} respectively (if they indeed exist). For convenience,
denote
$$
% \delta=\tau_{1}^{-},\  
\tau_{\zeta}=\tau_{\zeta}^{+} \mbox{ and } \sigma_{c}=\sigma_{c}^{+}.$$

Let $C_k$ be the cyclic group of order $k$ and $D_k$ be the dihedral group of order $2k$.  
 \begin{corollary}[\cite{MR3631928}, Corollary 16]\label{cor:automorphism-groups} % autom. groups 
Let $p(t)=t(t-\alpha_1)\cdots (t-\alpha_{2n})$, with distinct roots. 
 \begin{enumerate}
\item  If $\sigma_c^{\pm}$ does not exist in $\Aut(\mathcal{R}_2(p))$ for any nonzero complex number $c$, then 
$\Aut(\mathcal{R}_2(p))$ is generated by the automorphism $\tau_{\zeta}^+$ of order $2k$, where $k|2n$. In otherwords we have 
\[ \Aut(\mathcal{R}_2(p))= \langle \tau_{\zeta}^+ \rangle \simeq C_{2k}. 
\]  
%where 
%\color{red}
%$\tau_{\zeta}^+(u)=\zeta^{\frac{2n+1}{2}} u$, 
%$\tau_{\zeta}^+(t)=\zeta t$, 
%and 
%$\tau_1^-(t)=t$, $\tau_1^-(u)=-u$.  % (where $\zeta=1$). 
%\color{black}
 \item  If $\sigma_c^{\pm}$ exists in $\Aut(\mathcal{R}_2(p))$ for some nonzero complex number $c$ with $c^{2n}=a_1$, then $\Aut(\mathcal{R}_2(p))$ is generated by  $\sigma_c^+$, and some automorphism $\tau_{\zeta}^+$ of order $2k$, where $k|2n$. If $k|n$, then we have 
 % $\tau_{\zeta}^+$ of order $2k$, where $k|2n$ ???  
 \[ 
 \Aut(\mathcal{R}_2(p)) = 
 \langle \tau_{\zeta}^+, \sigma_c^+ \rangle \simeq D_{2 k} .
 \]  
% If $k\!\! \not| n$, then 
% $\phi_{\zeta}^+ \psi^+_c = \psi_c \phi_{\zeta^{-1}}^- 
% =  \psi_c \phi_1^- \phi_{\zeta^{-1}}^+$. 
   \end{enumerate}
 \end{corollary}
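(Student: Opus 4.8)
The plan is to deduce the structure of $\Aut(\mathcal{R}_2(p))$ entirely from the already-established description of $\Aut(R_2(p))$ in \thmref{cor:group-actions}, by showing that the correspondence \eqnref{eq:Liealgauto} is a group isomorphism $\Phi\colon \Aut(R_2(p)) \xrightarrow{\sim} \Aut(\mathcal{R}_2(p))$, $\phi \mapsto \tau_\phi$. First I would observe that $\tau_\phi$ is nothing but conjugation, $\tau_\phi(D) = \phi\circ D\circ\phi^{-1}$ for $D\in\text{Der}(R_2(p))$: writing $D = f\partial$ and evaluating on an arbitrary $g\in R_2(p)$ reproduces exactly the right-hand side $\phi(f)\,(\phi\circ\partial\circ\phi^{-1})$ of \eqnref{eq:Liealgauto}. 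From the conjugation form it is immediate that $\tau_{\phi\circ\phi'} = \tau_\phi\circ\tau_{\phi'}$, so $\Phi$ is a group homomorphism.

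Next I would check that $\Phi$ is bijective. Surjectivity is exactly the statement quoted from \cite{MR966871}, that every automorphism of $\mathcal{R}_2(p)$ arises via \eqnref{eq:Liealgauto}. For injectivity I would use that $\tau_\phi$ is $\phi$-semilinear for the $R_2(p)$-module structure on $\mathcal{R}_2(p)$ given by multiplication, namely $\tau_\phi(fD) = \phi(f)\,\tau_\phi(D)$, which follows from the same computation as above. Hence if $\tau_\phi = \mathrm{id}$ then $fD = \phi(f)D$ for all $f\in R_2(p)$ and all $D\in\mathcal{R}_2(p)$; since $R_2(p)$ is a domain and $\mathcal{R}_2(p)\neq 0$ (it is simple), the module $\mathcal{R}_2(p)$ is faithful, so $\phi(f)=f$ for all $f$, i.e. $\phi=\mathrm{id}$. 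Thus $\Phi$ is a group isomorphism, and in particular it preserves orders of elements and defining relations.

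With $\Phi$ in hand, both parts follow by transport of structure, using $\tau_\zeta^+ = \Phi(\phi_\zeta^+)$ and $\sigma_c^+ = \Phi(\psi_c^+)$. For part (1), the hypothesis that no $\sigma_c^\pm$ exists is equivalent, under $\Phi^{-1}$, to no $\psi_c^\pm$ existing, whence \thmref{cor:group-actions} gives $\Aut(R_2(p)) = \langle\phi_\zeta^+\rangle\cong C_{2k}$ with $\phi_\zeta^+$ of order $2k$ and $k\mid 2n$; applying $\Phi$ yields $\Aut(\mathcal{R}_2(p)) = \langle\tau_\zeta^+\rangle\cong C_{2k}$ with $\tau_\zeta^+$ again of order $2k$. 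For part (2), the existence of $\sigma_c^\pm$ with $c^{2n}=a_1$ places us in case (a) of \thmref{cor:group-actions}, and the hypothesis $k\mid n$ is precisely the condition that $l = 2n/k$ be even; that theorem then gives $\Aut(R_2(p)) = \langle\phi_\zeta^+,\psi_c^+\rangle\cong D_{2k}$, and applying $\Phi$ yields $\Aut(\mathcal{R}_2(p)) = \langle\tau_\zeta^+,\sigma_c^+\rangle\cong D_{2k}$, the relations $r^2=s^{2k}=(rs)^2=1$ being transported by the isomorphism.

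I expect the main obstacle to be the second paragraph, establishing that $\Phi$ is genuinely a group isomorphism rather than just a set-theoretic bijection: the homomorphism property and the faithfulness argument must be pinned down carefully, and surjectivity must be correctly imported from \cite{MR966871}. Once this is secured, the remainder is a purely formal transfer of the group-theoretic data of \thmref{cor:group-actions}, the only bookkeeping being the equivalence $k\mid n \Leftrightarrow l=2n/k\text{ even}$ that reconciles the two phrasings of the dihedral case.
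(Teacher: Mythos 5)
Your proposal is correct and follows essentially the same route as the paper, whose entire proof is the one-line appeal ``This follows from Theorem~\ref{cor:group-actions}.'' You merely make explicit the step the paper leaves implicit, namely that $\phi\mapsto\tau_\phi$ from \eqnref{eq:Liealgauto} is a group isomorphism $\Aut(R_2(p))\simeq\Aut(\mathcal{R}_2(p))$ (surjectivity being the quoted fact from \cite{MR966871}), after which the result is transport of structure from \thmref{cor:group-actions}, including the correct translation $k\mid n\Leftrightarrow l=2n/k$ even.
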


\begin{proof}
This follows from Theorem~\ref{cor:group-actions}. 

\end{proof}

%Let $k=r=2n$. 

\section{The decomposition of the space of K\"ahler differentials modulo exact forms for $D_{2k}$}\label{section:main-theorem}
Let $r=2n$, $R=R_2(p)$ and 
let $G:=\Aut(R)$ be the groups in Corollary~\ref{cor:automorphism-groups}. %, and consider $G$ acting on the K\"ahler differential $\Omega_R/dR$. 
For $\phi\in G$ and $\overline{rds} \in \Omega_R/dR$,  %, which has a finite dimensional basis, 
the action of $G$ on the K\"ahler differential is given by: 
\begin{equation}
\phi(\overline{rds}) = \overline{\phi(r)d\phi(s)}. \label{groupaction}
\end{equation}

First we note the following:

\begin{lemma}
For $n$ even, the character table is given by the matrix
\footnotesize
$$
M=\bordermatrix{\text{}&\rho_1&\rho_2&\rho_3 & \rho_4 & \chi_1 & \ldots &\chi_h & \ldots & \chi_{(n/2)-1} \cr
              1 &1 &  1  & 1 & 1 & 2 & \ldots & 2& \ldots & 2 \cr
               \psi & 1  &  -1& 1 & -1 & 0 &\ldots & 0 & \ldots & 0 \cr
                \psi \phi& 1 & -1 & -1& 1 & 0 &\ldots & 0 & \ldots & 0 \cr
                \phi & 1  &   1      &-1 & -1  & 2 \cos\left( \frac{2\pi}{ n}\right) &\ldots & 2\cos\left(\frac{2\pi h}{n}\right) & \ldots & 2\cos\left(\frac{2\pi \left((n/2)-1\right)}{n}\right)    \cr 
                \vdots & \vdots &  \vdots      &\vdots & \vdots  & \vdots & \ddots & \vdots & \ddots & \vdots  \cr 
                \phi^k& 1  &   1      &(-1)^k & (-1)^k   & 2 \cos\left( \frac{2\pi k}{ n}\right) &\ldots & 2\cos\left(\frac{2\pi hk}{n} \right) & \ldots & 2\cos\left(\frac{2\pi k \left((n/2)-1\right)}{n}\right)    \cr 
                \vdots  & \vdots  &   \vdots        &\vdots  & \vdots & \vdots & \ddots & \vdots & \ddots & \vdots \cr 
                \phi^{n/2} & 1  &   1      &(-1)^{n/2} & (-1)^{n/2}   & -2 &\ldots & 2(-1)^h& \ldots & 2(-1)^{(n/2)-1} \cr 
}.$$
So we have 
\begin{align*}
M^{-1} &= M^t \begin{pmatrix}
       \frac{1}{2n} &0 &  0  & 0 &    \ldots & 0& \ldots & \ldots & 0 \\
       0 &  \frac{n/2}{2n}& 0 & 0 &\ldots  & 0 & \ldots &  \ldots &0\\
       0& 0 &  \frac{n/2}{2n} & 0 &\ldots & 0 & \ldots &  \ldots & 0 \\
       0 & 0 & 0 &\frac{2}{2n} &\ldots & 0 & \ldots &  \ldots &0    \\ 
       \vdots & \vdots &\vdots & \vdots  & \ddots & \vdots  & \ddots & \vdots & \vdots  \cr 
       0 & 0 & 0 & 0 &\ldots &\frac{2}{2n}& \ldots & \vdots & 0  \\
       \vdots & \vdots &\vdots  & \vdots & \ddots & \vdots & \ddots & \vdots & \vdots \\
       0& 0 & 0 & 0 & \ldots & \vdots & \ldots &  \frac{2}{2n} & 0    \\
       0  & 0 & 0 & 0 & \ldots & 0& \ldots &  0& \frac{1}{2n} \\ 
\end{pmatrix}\\
&=\begin{pmatrix}
               \frac{1}{2n}  & \frac{1}{4} &  \frac{1}{4}  & \frac{1}{n} &    \ldots & \frac{1}{n}& \ldots &  \frac{1}{2n} \\
                \frac{1}{2n}   &  -\frac{1}{4}&- \frac{1}{4} &  \frac{1}{n} &  \ldots  & \frac{1}{n} & \ldots &  \frac{1}{2n} \\
                   \frac{1}{2n}  & \frac{1}{4} & -\frac{1}{4}& - \frac{1}{n}  &\ldots &  \frac{(-1)^k }{n}& \ldots &   \frac{(-1)^{n/2}}{2n}   \\
                \frac{1}{2n}  &   -\frac{1}{4}      & \frac{1}{4} & - \frac{1}{n} & \ldots  & \frac{(-1)^k}{n} & \ldots &\frac{(-1)^{n/2} }{2n}   \\ 
                \frac{1}{n}   &   0 & 0 &  \frac{2}{n} \cos\left( 2\pi /n\right)  &\ldots &  \frac{2}{n}\cos\left(2\pi h/n\right) & \ldots & -\frac{1}{n}   \\ 
                  \vdots &  \vdots      &\vdots & \vdots  & \ddots & \vdots & \ddots & \vdots\\
                   \frac{1}{n}   & 0 &0      & \frac{2}{ n}\cos\left(2\pi h/n\right)  &\ldots &  \frac{2}{n}\cos\left(2\pi hk /n\right) & \ldots & \frac{(-1)^h }{n} \\
                 \vdots  &   \vdots        &\vdots  & \vdots & \ddots &\vdots & \ddots & \vdots \\
                 \frac{1}{n}   &   0      & 0 &  \frac{2}{n}\cos\left(2\pi  \left((n/2)-1\right)/n\right)   &\ldots & \frac{2}{n}\cos\left(2\pi k \left((n/2)-1\right)/n\right)  &\ldots   & \frac{(-1)^{(n/2)-1}}{n} \\ 
\end{pmatrix}.  \\
\end{align*}

\end{lemma}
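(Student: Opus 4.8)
The plan is to prove the two assertions separately: first that $M$ really is the character table of the dihedral group $D_n$ of order $2n$ (with $n$ even), and then that its inverse has the stated form, the latter being an immediate consequence of the Schur orthogonality relations.

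First I would set up the representation theory of $D_n = \langle \phi, \psi \mid \phi^n = \psi^2 = 1,\ \psi\phi\psi^{-1} = \phi^{-1}\rangle$. When $n$ is even the commutator subgroup is $\langle \phi^2\rangle$, which has index $4$, so the abelianization is $\mathbb Z/2 \times \mathbb Z/2$ and yields exactly four one-dimensional characters $\rho_1,\dots,\rho_4$; these are determined by the values $\rho_i(\phi),\rho_i(\psi)\in\{\pm1\}$, and one checks directly that the four sign choices reproduce the first four columns of $M$ (in particular $\rho_i(\phi^k)=\rho_i(\phi)^k$ produces the entries $(-1)^k$). The remaining irreducibles are the two-dimensional representations obtained by inducing the character $\phi^j \mapsto \zeta^{hj}$, with $\zeta = \exp(2\pi\imath/n)$, from the rotation subgroup $\langle\phi\rangle$; for $1\le h\le (n/2)-1$ these are irreducible and pairwise inequivalent, their character satisfies $\chi_h(\phi^j)=\zeta^{hj}+\zeta^{-hj}=2\cos(2\pi hj/n)$, and $\chi_h$ vanishes on both reflection classes. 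Evaluating at $\phi^{n/2}$ gives $2\cos(\pi h)=2(-1)^h$, matching the last row. This establishes that $M$ is the character table.

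Next I would record the conjugacy classes and their sizes: $\{1\}$ and $\{\phi^{n/2}\}$ are singletons, the two reflection classes (represented by $\psi$ and $\psi\phi$) each have size $n/2$, and each rotation class $\{\phi^j,\phi^{-j}\}$ with $1\le j\le (n/2)-1$ has size $2$. This accounts for the $(n/2)+3$ classes indexing the rows, matching the $(n/2)+3$ irreducible characters indexing the columns. Crucially, every character of $D_n$ is real-valued, so complex conjugation may be dropped throughout. The inversion is then pure orthogonality: writing $M_{C,\chi}=\chi(g_C)$ and $W=\operatorname{diag}(|C|)$ for the diagonal matrix of class sizes, the first (column) orthogonality relation $\sum_C |C|\,\chi(g_C)\overline{\chi'(g_C)}=|G|\,\delta_{\chi\chi'}$ reads $M^t W M = 2n\,I$ once reality of the characters is used; hence $M^{-1}=\tfrac{1}{2n}M^t W$. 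Since $\tfrac{1}{2n}W=\operatorname{diag}\big(\tfrac{1}{2n},\tfrac{n/2}{2n},\tfrac{n/2}{2n},\tfrac{2}{2n},\dots,\tfrac{2}{2n},\tfrac{1}{2n}\big)$, this is exactly the first displayed factorization. The explicit closed form in the second display is obtained by carrying out the product $M^t\cdot\tfrac{1}{2n}W$ entrywise: the $(\chi,C)$ entry equals $\tfrac{|C|}{2n}\chi(g_C)$, which reproduces, for example, the trivial-character row $\tfrac{1}{2n},\tfrac14,\tfrac14,\tfrac1n,\dots,\tfrac1n,\tfrac{1}{2n}$ and the scaled cosine entries in the $\chi_h$ rows.

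The main obstacle is organizational rather than conceptual: one must fix a single consistent ordering of the $(n/2)+3$ conjugacy classes and of the irreducible characters, and then track the signs $(-1)^k$, $(-1)^{n/2}$ and the cosine arguments $2\pi hk/n$ through the transpose-and-scale step so that the explicit form of $M^{-1}$ matches column by column. I expect the bookkeeping of these entries, especially verifying that the reflection classes contribute the factors $\tfrac14$ and the interior rotation classes the factors $\tfrac1n$ in the correct positions, to be where genuine care is required, the underlying mathematics being the standard Schur relations.
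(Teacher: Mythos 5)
Your proof is correct and follows essentially the same route as the paper: identify $M$ as the character table of $D_n$ for $n$ even, list the $(n/2)+3$ conjugacy classes with their sizes, and invert via the orthogonality relation $M^tWM = 2n\,I$ with $W$ the diagonal matrix of class sizes (the paper simply cites Serre for the table and Terras for the orthogonality formula, whereas you derive the irreducible characters from the abelianization and by induction from $\langle\phi\rangle$, but the substance is identical).
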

\begin{proof}  Observe $M$ is just the character table for $D_n$ when $n$ is even:  
from page 37 in \cite{MR0450380}, we obtain the character table for $r=2n$ 
\[ 
\vline 
\begin{array}{c|c|c}
\hline
 & (\phi_{\zeta}^+)^k   &  \psi_c^+ (\phi_{\zeta}^+)^k  \\  \hline 
\rho_1 & 1 & 1   \\    \hline 
\rho_2 &  1 &  -1 \\    \hline 
\rho_3 &  (-1)^k &  (-1)^k  \\    \hline 
\rho_4 &  (-1)^k &  (-1)^{k+1}  \\    \hline 
\chi_h &  2\cos\left(2\pi hk/n\right)  &  0  \\    \hline 
\end{array} 
\vline ,
\]  
where $1\leq h<n/2$ for $n$ even, $\psi_c^+$ is a reflection, and $\phi_{\zeta}^+$ is a rotation.

Let $\Xi$ denote the set of conjugacy classes of the group $D_n$.  Then from the orthogonality of the characters of the irreducible representations, we get the inverse matrix for $M$ since one needs the following formula for any two irreducible representations $\pi$ and $\rho$ of $D_n$: 
\begin{equation}\label{orthongalityofchar}
\sum_{\{g\}\in\Xi }\frac{|\{g\}|}{|D_n|}\chi_\pi(g)\overline{\chi_\rho(g)}=\begin{cases} 1 
& \text{ for }\pi\cong \rho, \\
0 & \text{ otherwise}, \end{cases}
\end{equation}
(see page 260 in \cite{MR1695775}).
 
   The distinct conjugacy classes of $D_n$ via conjugation (for $n=2\widehat{m}$ even) are: 
 \[ 
 \begin{aligned}
 \{\Inoindex \},  
 &\{ \phi_{\zeta}^+, (\phi_{\zeta}^+)^{-1}\},  \ldots, 
  \{ (\phi_{\zeta}^+)^j, (\phi_{\zeta}^+)^{-j}\}, \ldots, 
    \{ (\phi_{\zeta}^+)^{\frac{n}{2}-1}, (\phi_{\zeta}^+)^{-(\frac{n}{2}-1)}\}, 
  \{ (\phi_{\zeta}^+)^{\widehat{m}}\}, \\ 
  &\{\psi_c^+ (\phi_{\zeta}^+)^{\ell}: \ell \mbox{ even}, 
  0\leq \ell \leq n-2 \}, 
  \{\psi_c^+ (\phi_{\zeta}^+)^{p}: p \mbox{ odd},  1 \leq p\leq n-1 \} 
\end{aligned} 
 \]  
since the even dihedral group has nontrivial center (thus giving us one element orbits). 
\end{proof}

So under an action by $G$, 
we decompose $\Omega_R/dR = Z(\widehat{\mathfrak{g}})$ 
into a direct sum of irreducible representations.  
Our goal in this section is to describe the module structure of $\Omega_R/dR$ into irreducibles under the action by $G$ for a particular $R_2(p)$. Recall that $\mathcal{R}_2(p) = \text{Der}(R_2(p))$.

\begin{theorem}\label{theorem-result-001}  
Let 
$p(t)=t(t-\alpha_1)\cdots (t-\alpha_{2n})$, where $\alpha_i$ are pairwisely distinct nonzero complex numbers.
\begin{enumerate}
\item\label{item:theorem-cyclic-decomp} 
 If $\sigma_c^{\pm}$ does not exist in $\Aut(\mathcal{R}_2(p))$ for any nonzero $c\in \mathbb{C}$,  then $\Aut(\mathcal{R}_2(p))\cong C_{2k}$ 
 and the center for the universal central extension of $\mathfrak g\otimes R_2(p)$ decomposes as: 
\begin{equation}
\Omega_R/dR\cong U_0\oplus \ldots \oplus U_{k-1}, 
 \end{equation}  
 where $\displaystyle{U_r=\bigoplus_{i\equiv r\mod k,1\leq i\leq 2n}\mathbb C\omega_i}$ for $r=1,\ldots, k-1$ is a sum of one-dimensional irreducible representation of $C_{2k}$ with character $\chi_r(s)=\exp(2\pi\imath rs/2k)$, each occurring with multiplicity $l$ and  
 $$
 U_0=\mathbb C\omega_0\oplus \bigoplus_{i=1}^l \mathbb C\omega_{ki}. 
 $$
\item\label{item:theorem-dihedral-decomp} 
Assume $\sigma_c^{\pm}$ exists in $\Aut(\mathcal{R}_2(p))$ for some nonzero $c\in \mathbb{C}$, $c^{2n}=a_1$ and $k|n$.  
% ($n=2k$ is allowed but too restrictive). 
Then $\Aut(\mathcal{R}_2(p)) \cong D_{2k}$. 
Moreover, if $k$ is also even then under the action of $D_{2k}$ the center decomposes as: 
\begin{equation} 
\Omega_R/dR \cong \mathbb{C} \omega_0 \oplus  \bigoplus_{i=3}^{4} U_i^{\frac{(1-(-1)^k)n}{2k} } \oplus \bigoplus_{h=1}^{k-1} V_{h}^{\oplus \frac{(1-(-1)^h)n}{k}}. 
\end{equation} 
where $U_i$, $i=1,2,3,4$, are the irreducible one dimensional representations for $D_{2k}$ with character $\rho_i$ and $V_h$ are the irreducible 2-dimensional representations for $D_{2k}$ with character $\chi_h$, $1\leq h\leq k-1$.  Note $\mathbb C\omega_0$ and $U_1$ are the trivial representations.

When $k$ is odd, the center decomposes as
\begin{equation}
\Omega_R/dR \cong \mathbb{C}\omega_0 
\oplus \bigoplus_{i=3}^{4} U_i^{\oplus \Upsilon_i(\epsilon_i, \nu_i)} 
\oplus 
\bigoplus_{j=1}^{k-1} V_j^{\oplus \frac{(1-(-1)^j)n}{k}}
\end{equation}
%while for $n$ odd, the center decomposes as  
%\begin{equation} 
%\Omega_R/dR \cong \mathbb{C} \omega_0 \oplus U_1^{\Xi_1} 
%\oplus U_2^{\Xi_2} \oplus \bigoplus_{j=1}^{\frac{n-1}{2}} V_{j},  
%\end{equation}
%where 
%\begin{align*} 
%\Upsilon_i(\epsilon_i,\nu_i) &= \frac{n}{2k} +\frac{\epsilon_i }{4}   \displaystyle{\sum_{i=n+3}^{2n}}
%c^{\frac{n+3-2i}{2}}
%  P_{i-n-3,-i} + \frac{\nu_i}{4}\displaystyle{\sum_{i=n+3}^{2n}}
%\zeta^{\frac{2n+3-2i}{2}}
%c^{\frac{n+3-2i}{2}}
%  P_{i-n-3,-i} \\ 
%\end{align*} 
with 
\begin{align*} 
\Upsilon_i(\epsilon_i,\nu_i) 
%&= \frac{(1-(-1)^k)n}{2k}(\delta_{i,3}+\delta_{i,4})+\frac{\epsilon_i }{4}   \displaystyle{\sum_{i=n+3}^{2n}}
%c^{n+3-2i}
%  P_{i-n-3,-i} + \frac{\nu_i}{4}\displaystyle{\sum_{i=n+3}^{2n}}
%\zeta^{\frac{2n+3-2i}{2}}
%c^{n+3-2i}
%  P_{i-n-3,-i} \\ 
  &= \frac{(1-(-1)^k)n}{2k}(\delta_{i,3}+\delta_{i,4})+\frac{1-(-1)^n}{4}(\delta_{i,4}-\delta_{i,3})+\frac{1 }{2} (-1)^i  \displaystyle{\sum_{i=n+3}^{2n}}
c^{n+3-2i}
  P_{i-n-3,-i}.
\end{align*} 
%and 
%\begin{align*}
%\Xi_1 = \frac{1}{2}- \frac{1}{2} \sum_{i=n+3}^{2n} c^{\frac{n+3-2i}{2}}P_{i-n-3,-i} 
%\mbox{ and }  
%\Xi_2 = \frac{3}{2}+ \frac{1}{2} \sum_{i=n+3}^{2n} c^{\frac{n+3-2i}{2}}P_{i-n-3,-i}, 
%\end{align*}
%$\mathbb{C} \omega_0$ 
%is a $1$-dimensional irreducible representation, $U_i$ are pairwise distinct $1$-dimensional irreducible representations, and 
%$V_j$ are pairwise distinct $2$-dimensional irreducible representations. 
%When $n=2$, we have $\mathbb{C}\omega_2\oplus \mathbb{C}\omega_3$ and $\mathbb{C}\omega_1\oplus \mathbb{C}\omega_4$ as two $2$-dimensional irreducible representations of $D_2\times \mathbb{Z}_2$. 
 \end{enumerate}
 \end{theorem}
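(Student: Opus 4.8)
The plan is to compute the action of the generators of $G=\Aut(\mathcal R_2(p))$ on Bremner's basis \eqnref{omegaRmoddR} of $\Omega_R/dR$, assemble the character of this $(2n+1)$-dimensional representation on each conjugacy class, and then read off the multiplicities by pairing against the irreducible characters recorded in the Lemma through the orthogonality relation \eqnref{orthongalityofchar}. The structural input that makes the bookkeeping close up is the shape of $p$ from \corref{polycoefficients}: by \eqnref{eqn:dihedral-setting-p-in-k-and-n} we have $p(t)=t\,q(t^k)$, so $a_j\neq 0$ only when $j\equiv 1\pmod k$. Feeding this into the recursion \eqnref{recursion2} shows that the first index is preserved modulo $k$, whence $P_{a,i}=0$ unless $a\equiv i\pmod k$; this sparsity is what ultimately forces the $P$-sums below onto the clean index range $n+3\le i\le 2n$.

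For part \eqref{item:theorem-cyclic-decomp} the group is generated by $\phi:=\phi_\xi^+$, and a direct application of \eqnref{groupaction} with $\phi(t)=\xi^2t$, $\phi(u)=\xi u$ gives $\phi(\omega_0)=\omega_0$ and $\phi(\omega_i)=\xi^{3-2i}\omega_i$ for $1\le i\le 2n$. Thus every $\omega_i$ spans a one-dimensional subrepresentation, and since $\xi^{2k}=1$ the eigenvalue $\xi^{3-2i}$ depends only on $i\bmod k$. Collecting the basis vectors by this residue yields the summands $U_r$; as $k\mid 2n$ each nonzero residue class $1\le r\le k-1$ contains exactly $l=2n/k$ indices, giving the stated multiplicities, while the residue-$0$ class together with $\omega_0$ forms $U_0$. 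This settles the cyclic decomposition.

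For part \eqref{item:theorem-dihedral-decomp} the group $D_{2k}=\langle\phi,\psi\rangle$ with $\psi:=\psi_c^+$ is available by \corref{cor:automorphism-groups}, and case (a) ($c^{2n}=a_1$) is exactly the situation in which the coefficient symmetry \eqnref{coefficientsymmetry} holds. The rotations act diagonally as above. For the reflection I would compute, using \eqnref{groupaction} and the reduction \eqnref{eqn:base-step-induction-dt-P-ij} of \propref{oddlem}, that
\[
\psi(\omega_i)=-\,c^{\,n+3-2i}\sum_{j=1}^{2n}P_{i-n-3,-j}\,\omega_j,\qquad \psi(\omega_0)=\pm\omega_0 .
\]
Hence the matrix of any reflection $\psi\phi^s$ is a diagonal rotation followed by this map, with diagonal entries $-\xi^{(3-2i)s}D_i$, where $D_i:=c^{\,n+3-2i}P_{i-n-3,-i}$ (together with the $\pm1$ from $\omega_0$). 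Taking traces, the character of the representation on the two types of classes is
\begin{align*}
\chi(\phi^s)&=1+\sum_{i=1}^{2n}\xi^{(3-2i)s},\\
\chi(\psi\phi^s)&=\pm 1-\sum_{i=1}^{2n}\xi^{(3-2i)s}D_i .
\end{align*}
The rotation sum is a geometric series in $\xi^{-2s}=\zeta^{-s}$ over the full period $2n=lk$, so it vanishes unless $k\mid s$, which recovers the clean values of $\chi(\phi^s)$.

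Finally I would extract each multiplicity as $\langle\chi,\chi_W\rangle=\tfrac1{4k}\sum_{g}\chi(g)\overline{\chi_W(g)}$ using the character table of the Lemma. For the two-dimensional $V_h$ the reflection characters vanish, so only the rotation classes contribute and the geometric sums collapse to the stated $\tfrac{(1-(-1)^h)n}{k}$; the vector $\omega_0$ alone accounts for the summand $\mathbb C\omega_0$. The delicate point, and the main obstacle, is the one-dimensional characters $\rho_3,\rho_4$, where the reflection contribution takes the form $\sum_s(-1)^s\chi(\psi\phi^s)=-\sum_i D_i\sum_s\xi^{(k+3-2i)s}$; the inner sum equals $2k$ precisely when $2k\mid(k+3-2i)$ and $0$ otherwise. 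For $k$ even this congruence has no solution, so $U_3,U_4$ drop out and $\Upsilon_i$ reduces to $\tfrac{(1-(-1)^k)n}{2k}=0$. For $k$ odd the selected indices are exactly those with $i\equiv(k+3)/2\pmod k$, which by the sparsity established above coincides, on the range $i\ge n+3$, with $\{\,i : D_i\neq0\,\}$; thus the selected sum collapses to $\sum_{i=n+3}^{2n}D_i$ together with the single boundary index $i=(n+3)/2$, whose value is $D_{(n+3)/2}=1$ via the initial condition $P_{l,i}=\delta_{l,i}$ and which is selected exactly when $n$ is odd (here one uses that, since $k$ is odd, $n=mk$ shares the parity of $m=n/k$). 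Assembling the three contributions produces the multiplicity $\Upsilon_i(\epsilon_i,\nu_i)$. The hard part is precisely this matching of the orthogonality-selected $P$-sum to the closed range in $\Upsilon_i$: it rests on the mod-$k$ sparsity of $P_{a,i}$ and the careful accounting of the boundary term, with the coefficient symmetry \eqnref{coefficientsymmetry} guaranteeing that we remain in case (a) so that the reduction \eqnref{eqn:base-step-induction-dt-P-ij} applies throughout the relevant range.
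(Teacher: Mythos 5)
Your proposal is correct and follows essentially the same route as the paper: compute the matrices of $\phi_\xi^+$ and $\psi_c^+$ on Bremner's basis, reduce $\psi_c^+(\omega_i)$ for $i\ge n+3$ via \propref{oddlem}, take traces, and extract multiplicities from the $D_{2k}$ character table (your orthogonality pairing is the same computation as the paper's inversion of the matrix $M$), with the mod-$k$ sparsity of $P_{a,i}$ from \corref{polycoefficients} and the boundary term at $2i=n+3$ playing exactly the roles you identify. The only minor imprecision is the $\pm$ on $\psi(\omega_0)$, which the paper pins down as $\psi_c^+(\omega_0)=-\omega_0$.
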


%\color{blue}STRATEGY TO PROVE THE THEOREM [\textbf{delete this after completing the proof}]:  For the dihedral group action (Ch 5 in Serre's book), write down the character table under each group action, then use the projection formula in Serre's book or in Cox's module decomp paper to project an arbitrary vector into a $2$-dimensional or $1$-dimensional irreducible component in order to find the basis elements for the irreducible representations.  
%Begin with $n=3$ case, and 
%do a separate case for $n=2$ so $|D_n|=4$.  
%\color{black}

\begin{corollary}\label{cor:honest-2-dimensional-irreps}
When $\overline{\omega_i}=c^{-\frac{n+3-2i}{2}}\zeta^{-\frac{i}{2}}\omega_i$ for $1\leq i\leq n+2$, we obtain that 
\begin{center}
$\overline{\omega_i}$ and 
$\overline{\omega_{n+3-i}}$, where $1\leq i\leq (n+2)/2$, 
\end{center}
span a $2$-dimensional irreducible representation for $n$ even.
 \end{corollary}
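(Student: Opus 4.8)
The plan is to work in the dihedral setting of \thmref{theorem-result-001}, writing $\phi=\phi_\xi^+$ and $\psi=\psi_c^+$ for the generating rotation and reflection, with $\zeta=\xi^2$, $k\mid n$, and $a_1=c^{2n}$ as in case \eqref{pma}. Throughout I fix square roots $\zeta^{1/2}=\xi$ and $c^{1/2}$ so that the normalized vectors $\overline{\omega_i}=c^{-(n+3-2i)/2}\zeta^{-i/2}\omega_i$ are well defined.

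First I would compute the two generators on the unnormalized basis. Using the action \eqref{groupaction} together with $\phi(t)=\zeta t$ and $\phi(u)=\xi u$, a direct calculation gives $\phi(\omega_i)=\overline{(\zeta t)^{-i}(\xi u)\,d(\zeta t)}=\zeta^{1-i}\xi\,\omega_i=\xi^{3-2i}\omega_i$, so $\phi$ is diagonal. For the reflection, $\psi(t)=c^2t^{-1}$ and $\psi(u)=c^{n+1}t^{-n-1}u$ yield $\psi(\omega_i)=-c^{n+3-2i}\,\overline{t^{\,i-n-3}u\,dt}$. The key reduction is to re-express this last class in the $\omega_j$-basis via \eqref{eqn:base-step-induction-dt-P-ij}, namely $\overline{t^{\iota}u\,dt}=\sum_{j=1}^{2n}P_{\iota,-j}\omega_j$; since $1\le i\le n+2$ forces $\iota=i-n-3\in[-n-2,-1]\subseteq[-2n,-1]$, the initial condition $P_{l,j}=\delta_{l,j}$ collapses the sum to a single term, giving $\psi(\omega_i)=-c^{n+3-2i}\omega_{n+3-i}$. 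It is precisely this restriction on $i$ that keeps the reflection action monomial: for indices beyond $n+2$ the class $\overline{t^{\iota}u\,dt}$ no longer collapses and genuinely involves the $P_{k,i}$, which is the source of the more intricate multiplicities recorded elsewhere.

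Next I would assemble the two-dimensional subrepresentation. The formulas above show $W_i:=\Span(\omega_i,\omega_{n+3-i})=\Span(\overline{\omega_i},\overline{\omega_{n+3-i}})$ is stable under both $\phi$ (diagonal) and $\psi$ (off-diagonal), hence a subrepresentation. Passing to the normalized basis makes the action canonical: the scalars cancel to give $\phi(\overline{\omega_i})=\xi^{3-2i}\overline{\omega_i}$ and $\psi(\overline{\omega_i})=-\xi^{n+3-2i}\overline{\omega_{n+3-i}}$, so in the basis $(\overline{\omega_i},\overline{\omega_{n+3-i}})$ the rotation is $\diag(\lambda,\lambda^{-1})$ with $\lambda=\xi^{3-2i}$ and the reflection is anti-diagonal. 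Here the relation $\lambda^{-1}=\xi^{2i-2n-3}$ uses $k\mid n$, which forces $\xi^{-2n}=\zeta^{-n}=1$; this is exactly the shape of a standard $2$-dimensional dihedral representation, and one checks $\psi^2=\mathrm{id}$ as a consistency test, in agreement with case \eqref{pma}.

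Finally, for irreducibility I would argue that $W_i$ has no proper invariant subspace. Because $n$ is even, $n+3$ is odd and $i\ne n+3-i$ in the stated range $1\le i\le (n+2)/2$, so $\overline{\omega_i}$ and $\overline{\omega_{n+3-i}}$ are genuinely independent and $W_i$ is honestly two-dimensional. Since $\phi$ is diagonalizable, its only invariant lines are the coordinate axes, and $\psi$ interchanges them; hence no line is stable under all of $G$ once the eigenvalues $\xi^{3-2i}$ and $\xi^{2i-2n-3}$ are distinct, i.e.\ once $\lambda\ne\lambda^{-1}$. Equivalently, one may compute $\chi_{W_i}(\phi^j)=2\cos\!\big(\pi j(3-2i)/k\big)$ with $\chi_{W_i}$ vanishing on all reflections, and verify $\langle\chi_{W_i},\chi_{W_i}\rangle=1$ using the orthogonality relation \eqref{orthongalityofchar} and the character-table Lemma. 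I expect the main obstacle to be twofold: getting the reflection computation in the second step exactly right — in particular justifying the collapse of $\overline{t^{\,i-n-3}u\,dt}$ to $\omega_{n+3-i}$ through the index bookkeeping in the $P$-recursion — and pinning down the eigenvalue-distinctness $\lambda\ne\lambda^{-1}$, which is where the parity of $n$ and the range of $i$ enter decisively.
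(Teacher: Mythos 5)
Your computations of the two generators on the $\omega_i$ and the passage to the normalized basis are correct and follow essentially the same route as the paper: both arguments reduce to exhibiting $\phi_\xi^+$ as $\diag(\lambda,\lambda^{-1})$ with $\lambda=\xi^{3-2i}$ and $\psi_c^+$ as an anti-diagonal involution on $\Span(\overline{\omega_i},\overline{\omega_{n+3-i}})$, and your collapse of $\overline{t^{\,i-n-3}u\,dt}$ to $\omega_{n+3-i}$ for $1\leq i\leq n+2$ is exactly what the paper uses (it needs only that $n+3-i$ lies in the basis range $1,\dots,2n$, not the full $P$-recursion).

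The gap is in your final step. You correctly reduce irreducibility to the eigenvalue condition $\lambda\neq\lambda^{-1}$, but you then assert that this is ``where the parity of $n$ and the range of $i$ enter decisively'' and never verify it. The parity of $n$ only guarantees $i\neq n+3-i$, i.e.\ that the span is honestly two-dimensional; it says nothing about $\lambda^2\neq 1$. In fact $\lambda^2=\xi^{2(3-2i)}=\zeta^{3-2i}$, so $\lambda=\lambda^{-1}$ exactly when $k\mid 3-2i$, which can happen whenever $k$ is odd (note $3-2i$ is odd, so it never happens for $k$ even). For instance, with $n=6$, $k=3$, $i=3$ one gets $\lambda=\xi^{-3}=-1$: the rotation acts as $-\mathrm{id}$ on $\Span(\omega_3,\omega_6)$, the anti-diagonal reflection then has two invariant lines, and this pair splits into two one-dimensional representations --- consistent with the paper's own $n=6$, $k=3$ example, where $n_3=n_4=2$. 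So the claimed irreducibility requires the additional hypothesis $k\nmid 3-2i$ (automatic for $k$ even), and your proposed consistency check $\langle\chi_{W_i},\chi_{W_i}\rangle$ would actually return $2$ rather than $1$ in the degenerate cases. To be fair, the paper's own proof elides the same point, merely observing that the matrices ``coincide with classical $2$-dimensional irreducible representations''; but your write-up isolates the missing condition explicitly and then fails to discharge it, which is a genuine gap rather than a stylistic omission.
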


The following is a proof of Theorem~\ref{theorem-result-001}. 
 
\begin{proof} % the polynomial p is a product of l cycles.
\vskip 10pt
We will first prove \eqref{item:theorem-cyclic-decomp}.  Recalling \eqref{eq:cyclic-Z2-case}:   
\[ 
 \phi(t)=\xi^2t=\zeta t \mbox{ and } \phi(u)=\xi  u  
 \] 
(so $\phi=\phi_\xi^+$ has order $2k$), the action of $\Aut_{\mathbb{C}}(R)$ shows that 
\[ 
\phi^j(\omega_0) =\phi^j(\overline{t^{-1}dt})
=\overline{\phi^j(t)^{-1}\,d\phi^j(t)}
=\overline{\zeta^{-j}t^{-1}\,d(\zeta^j t)}=\overline{t^{-1}\,d t}=\omega_0 
\]  
and 
\begin{equation}
\begin{aligned}
\phi^j(\omega_i)&
=\phi^j(\overline{t^{-i}udt})
=\overline{\xi^{-2ij}t^{-i}\xi^j u\, d(\xi^{2j} t) }
= \overline{ \xi^{(3-i)j}t^{-i}u\, dt}
=\xi^{(3-i)j}  \omega_i\\
&=\exp(2\pi\imath(3-2i)j/2k)\omega_i=\exp(2\pi\imath(kl-2i+3)j/2k)\omega_i
\end{aligned}
\end{equation}
for all $0\leq j\leq 2k$ and $0\leq i\leq 2n$.  
Now the characters of the irreducible representations of $C_{2k}$ are of the form $\chi_h(\phi^s)=\exp\left(2\pi\imath sh/2k\right)$ with $0\leq h\leq 2k-1$.
In order to figure out the multiplicities, we need to solve the number of solutions to 
$$
2s\equiv 2r\mod 2k
$$
for $1\leq r\leq s\leq 2n$.  In this case $2(s-r)=2kd$, so $0\leq s-r=kd\leq 2n-1=kl-1$ for some integer $d$.  Thus $s=r+kd$ where $0\leq d\leq l-1$ and the multiplicity is $l$ for each irreducible representation.
 
We conclude that the center $\Omega_R/dR$ decomposes into the direct sum of one-dimensional eigenspaces:  
\begin{equation} 
\Omega_R/dR\cong U_0\oplus \ldots \oplus U_{k-1},  
 \end{equation}   
 where  
 $$ 
 \displaystyle{U_r=\bigoplus_{i\equiv r\mod k,1\leq i\leq 2n}\mathbb C\omega_i}
\hspace{2mm} \mbox{ for } r=1,\ldots, k-1,  
$$  
a sum of one-dimensional irreducible representation of $C_{2k}$ with character $\chi_r(s)=\exp(2\pi\imath rs/2k)$, each occurring with multiplicity  $l$  and 
 $$ 
 U_0=\mathbb C\omega_0\oplus \bigoplus_{i=1}^l \mathbb C\omega_{ki}. 
 $$ 
where $U_i$ are the one dimensional irreducible representations of $D_{2k}$ with characters $\rho_i$, $i=1,2,3,4$ and $V_h$ are the irreducible representations with character $\chi_h$, $1\leq h\leq k-1$.

Next, we see that 
\[ 
\begin{aligned}
\psi_c^{+} (\omega_0) &= \psi_c^{+}(\overline{t^{-1}\, dt}) 
= \overline{c^{-2}t \, d(c^2t^{-1}) } 
= \overline{t\, d(t^{-1}) } 
=- \overline{t\cdot t^{-2}dt}  
= -\omega_0  
\end{aligned}
\] 
and 
\[ 
\phi_{\xi}^+(\omega_0) = \phi_{\xi}^+(\overline{t^{-1}dt})
= \overline{\xi^{-2}t^{-1}d(\zeta t)} 
= \overline{t^{-1}dt} = \omega_0. 
\] 
So $\omega_0$ is a basis element for a one-dimensional irreducible representation under the action of $D_{2k}$. 

Similarly, we have the rotations acting on $\omega_i$ as a scalar multiplication: 
\begin{align}\label{eqn:phi-zeta-plus}
\phi_{\xi}^+(\omega_i) &= \phi_{\xi}^+( \overline{ t^{-i}udt } ) 
= \overline{\zeta^{-i}t^{-i}\xi u\zeta dt}    
= \overline{\xi^{3-2i}t^{-i}u dt }   \\
&= \xi^{3-2i} \omega_i\notag
\end{align} 
 and the reflections acting via: 
\begin{align}\label{eqn:psi-c-plus} 
\psi_c^+(\omega_i) &= \psi_c^+(\overline{t^{-i}udt}) 
= \overline{c^{-2i} t^i t^{-n-1}c^{n+1}ud(c^2t^{-1}) }  
= - \overline{ c^{n-2i+3}t^{i-n-1}ut^{-2} dt }   \\ 
&= - \overline{c^{n+3-2i}t^{-(n+3-i)} udt } \notag \\
&= -c^{n+3-2i} \omega_{n+3-i}\quad\text{ if }1\leq i\leq n+2  \notag
\end{align}  
where we assumed $a_1=c^n$. 

%\color{blue}
%\begin{equation}
%0=  \sum_{j=1}^{2n+1 } ( 3j+2k -4n+2  )a_j \overline{ t^{k +j  -2n -3}u\,dt},  
%\end{equation}
%maps under $\psi_c^+$ to
%\begin{align*}
%0&=  \sum_{j=1}^{2n+1 } ( 3j+2k -4n+2  )a_jc^{n+3-2(k +j  -2n-3 )} \overline{t^{n+3-(k +j -2n -3)}u\,dt} \\
%&=-\sum_{j=1}^{2n+1 } ( 3j+2k -4n+2  )a_jc^{5n+9-2(k +j )}  \overline{t^{3n-k -j +6}u\,dt}\\
%\end{align*}
%
%\color{black}
We also have: 
\[ 
\begin{aligned} 
\psi_c^+(\omega_{n+3-i}) &= \psi_c^+(\overline{ t^{-n-3+i}u\, dt} ) 
= \overline{c^{-2n-6+2i}t^{n+3-i}t^{-n-1} c^{n+1}u\, d(c^2t^{-1}) } \\ 
&
= - \overline{ c^{-n-3+2i}t^{-i} u\, dt } \\ 
&= -c^{-(n+3-2i)} \omega_i,\quad \text{ if } 1\leq i\leq n+2
\end{aligned} 
\] 
i.e., $\sigma_c^+(-c^{n+3-2i} \omega_{n+3-i})= \omega_i$.

\underline{Case 1}. Let $n$ be even (but different from $2$). 
We see that for $1\leq i\leq \frac{n+2}{2}$, the $2$-dimensional spaces $\mathbb{C}\omega_i\oplus \mathbb{C}\omega_{n+3-i}$ form irreducible $D_{2k}$-representations
since  the matrix representation for $\phi_{\zeta}^+$ and $\psi_c^+$ with respect to the basis $\omega_i$ and $\omega_{n+3-i}$ are: 
\begin{equation}
\phi_{\zeta}^+|_{\{\omega_i,\omega_{n+3-i}\}} = 
\left( 
\begin{array}{cc}
\zeta^{\frac{2n+3-2i}{2}} & 0 \\ 
0 & \zeta^{\frac{2i-3}{2}}  \\ 
\end{array}
\right)  
\mbox{ and } 
\psi_c^+ |_{\{\omega_i,\omega_{n+3-i}\}}
= 
\left( 
\begin{array}{cc} 
0&  -c^{-(n+3-2i)} \\ 
-c^{n+3-2i} & 0 \\ 
\end{array}
\right), 
\end{equation}
respectively, where $\tr(\phi_{\zeta}^+|_{\{\omega_i,\omega_{n+3-i}\}}) = \zeta^{n} $ and 
$\tr(\psi_c^+ |_{\{\omega_i,\omega_{n+3-i}\}})=0$. 
% since the indices for $\omega_i$ and $\omega_{n+3-i}$ are positive. 
 It follows from Corollary~\ref{cor:honest-2-dimensional-irreps} that we indeed have $2$-dimensional irreducible representations.  

For $i$ between $n+3 \leq i \leq 2n$,  
\[ 
\overline{t^{i-n-3}u\,dt} = \sum_{k=1}^{2n}P_{i-n-3,-k}\omega_k
\] 
by Equation~\eqref{eqn:omega_i-Pij-Qij}.  
Thus for $n+3\leq i\leq 2n$, we have 
\begin{equation}\label{eqn:psi-c-plus-last} 
\psi_{c}^+(\omega_i) =-c^{n+3-2i} \overline{t^{i-n-3}u\,dt} = -c^{n+3-2i}  \sum_{k=1}^{2n}P_{i-n-3,-k} \omega_k. 
\end{equation}
Recall the recursion relations: 
\begin{equation}
(2k+r+3)P_{l,-i}=-\sum_{j= 1 }^{r} (3j+2l-2r)a_jP_{l-r+j-1,-i}
\end{equation}
for $l\geq 0$ with the initial condition
$P_{-m,-i}=\delta_{-m,-i}$, $1 \leq i,m\leq r$. Now from \corref{polycoefficients} we have $a_j=0$ unless $j=1+qk$ for some $0\leq q\leq  (2n)/k$. Hence we have 
% ($r=2n$) 
% \color{red} (Guess $P_{l,-i}=0$ if $l\not\equiv -i+kq$ for some $q\in\mathbb Z$)\color{black}
\begin{align*}
(2k+2n+3)P_{l,-i}&=-\sum_{j= 1 }^{2n} (3j+2l-4n)a_jP_{l-2n+j-1,-i} \\
&=-\sum_{q= 0 }^{\frac{2n}{k}-1} (3qk+2l-4n+3)a_{1+qk}P_{l-2n+qk,-i}
\end{align*}
so for a summand on the right to be nonzero we must have $l-2n+qk=-i+ak$ for some $a\in\mathbb Z$. Or rather $l=-i+2n+(a-q)k=-i+bk$ for some $b\in\mathbb Z$.
% (\color{red} check this on examples \color{black}).  
Otherwise it might be that $P_{l,-i}$ is be nonzero for $l=-i+bk$. 

  In particular if $l=i-n-3$, then $l\neq -i+bk$  for any $b\in\mathbb Z$ (otherwise $l\equiv i-3\mod k$ and $l\equiv -i\mod k$ gives us $i-3=-i\mod k$ and $2i=3+dk$ with $k$ even).  Hence 
$P_{i-n-3,-i}=0$.

The matrix representation for $\eqref{eqn:phi-zeta-plus}$ in basis $\{ \omega_1,\ldots, \omega_{2n}\}$ is 
\[ 
\phi_{\xi}^+ = 
\left(
\begin{array}{ccccc}
\xi & 0& 0& 0& 0  \\ 
0 & \xi^{-1}& 0& 0& 0  \\ 
0 & 0& \xi^{-3}& 0& 0  \\ 
\vdots & \vdots& \vdots& \ddots & \vdots  \\ 
0 & 0& 0& 0&  \xi^{3-2n} \\ 
\end{array}
 \right), 
\] 
which is traceless,
while the matrix representation for \eqref{eqn:psi-c-plus} for $1\leq i\leq n+2$ and \eqref{eqn:psi-c-plus-last} for $n+3\leq i\leq 2n$ in  $\{ \omega_1,\ldots, \omega_{2n}\}$ is  
\[ 
\psi_c^+ = 
\left( 
\begin{array}{ccccccccc}
0 &0 & \cdots & 0& -c^{-(n+1)}&-c^{-(n+3)} P_{0,-1}& \ldots & -c^{3-3n} P_{n-3,-1}\\ 
0 &0 & \cdots & -c^{-(n-1)} &0 & -c^{-(n+3)}P_{0,-2}& \ldots & -c^{3-3n}P_{n-3,-2} \\ 
\vdots  &\vdots & .^{.^.} &\vdots  &\vdots  & \vdots &.^{.^.} & \vdots\\ 
0& -c^{n-1} & \ldots & 0& 0& -c^{-(n+3)}P_{0,-n-1} &\ldots & -c^{3-3n}P_{n-3,-n-1} \\ 
-c^{n+1} &0 &\ldots &0 &0 &-c^{-(n+3)}P_{0,-n-2}& \ldots& -c^{3-3n}P_{n-3,-n-2}\\ 
0& 0& \vdots & 0& 0& -c^{-(n+3)}P_{0,-n-3}&\vdots   &  -c^{3-3n}P_{n-3,-n-3}\\ 
\vdots &\vdots &.^{.^.} &\vdots &\vdots &  \vdots & .^{.^.}& \vdots\\  
0 &0 &\ldots &0 &0 &  -c^{-(n+3)}P_{0,-2n+2}& \ldots & -c^{3-3n}P_{n-3,-2n+2} \\ 
0 &0 &\ldots  &0 &0 &-c^{-(n+3)}P_{0,-2n+1}  &\ldots  &   -c^{3-3n}P_{n-3,-2n+1}\\ 
0 &0 &\ldots  &0 &0 &  -c^{-(n+3)} P_{0,-2n} & \ldots &-c^{3-3n}P_{n-3,-2n}  \\ 
\end{array}
\right),  
\] 
which has trace   
$$
-\sum_{i=n+3}^{2n} c^{n+3-2i}
  P_{i-n-3,-i}   = 0 
  $$  
  since $k|n$ and $k$ is even. 
%  Now by \eqnref{recursion2} 
%\begin{align}\label{recursion3}
%P_{i-n-3,-i}&=-\sum_{j= 1 }^{2n} \frac{3j+2(i-n-3)-4n}{2(i-n-3)+2n+3}a_jP_{i-n-3-2n+j-1,-i}  \\
%&=-\sum_{j= 1 }^{2n} \frac{3j+2i-6n-6}{2i-3}a_jP_{i+j-3n-4,-i}. \notag
%\end{align}

So for $n$ even, the set of equations we need to solve is 
  $$
  \chi_{(\Omega_R/dR)/\mathbb C\omega_0}=n_1\rho_1+n_2\rho_2+n_3\rho_3+n_4\rho_4+\sum_{h=1}^{k-1}m_h\chi_h, 
  $$
  which are precisely, 
  \begin{align*}
  2n&=n_1 +n_2 +n_3 +n_4 +\sum_{h=1}^{k-1}2m_h, \\
    0 
  &=n_1 -n_2 +n_3 -n_4 \quad\text{ for }\psi_c^+, \\
  0 
&=n_1-n_2 -n_3 +n_4 \quad\text{ for }\psi_c^+\phi_\xi^+, \\
 0&=n_1 +n_2 +(-1)^qn_3 +(-1)^qn_4 +\sum_{h=1}^{k-1}2m_h\cos (2\pi hq/2k)\quad \mbox{ for } 1\leq q\leq (2k/2)-1=k-1 \\
 -2n&=n_1 +n_2 +(-1)^qn_3 +(-1)^qn_4 +\sum_{h=1}^{k-1}2m_h(-1)^h.
\end{align*}
 In the above we used the fact that for $1\leq j\leq k$ one has
 $$
  \chi_{(\Omega_R/dR)/\mathbb C\omega_0}((\phi_\xi^+)^j)= \sum_{i=1}^{2n}\xi^{(3-2i)j}=\xi^{3j}\sum_{i=1}^{2n}\xi^{-2ij}=2n\xi^{3j}\delta_{j,k}=-2n\delta_{j,k}
 $$
 since
\begin{align*}
 0&=\xi^{2nj}-1=(\xi^{2j}-1)(\xi^{2j(n-1)}+\xi^{2j(n-2)}+\cdots +\xi^{2j}+1)  \\
 &=(\xi^{2j}-1)(\xi^{-2j}+\xi^{-4j}+\cdots +\xi^{2j}+1)=(\xi^{2j}-1)\left(\sum_{i=1}^{2n}\xi^{-2ij}\right).
 \end{align*}
 If $1\leq j<k$, then the left factor in the last equality is not zero so the sum must be zero.
The set of equations above can be written as 
\begin{align*}
M\begin{pmatrix}n _1 \\
n_2 \\ 
n_3 \\
n_4 \\
m_1\\
\vdots \\
m_h\\
\vdots \\
m_{n/2-1}\end{pmatrix}=\begin{pmatrix}
2n  \\
  0 \\
0  \\ 
  0 \\  
0 \\ 
\vdots  \\ 
0 \\ 
0\\
-2n\\
\end{pmatrix}.
\end{align*}
 Thus 
 \begin{align*}
&\begin{pmatrix}n _1 \\
n_2 \\ 
n_3 \\
n_4 \\
m_1\\
\vdots \\
m_h\\
\vdots \\
m_{n/2-1}\end{pmatrix}
=M^{-1}\begin{pmatrix}
2n  \\
 0  \\
0     \\
  0 \\ 
  0 \\
\vdots  \\
0\\
0\\ 
-2n\\ 
\end{pmatrix}  \\
&= 
\begin{pmatrix}
   \frac{1}{4k}  & \frac{1}{4} &  \frac{1}{4}  & \frac{1}{2k} &    \ldots & \frac{1}{2k}& \ldots &  \frac{1}{4k} \\
   \frac{1}{4k}   &  -\frac{1}{4}&- \frac{1}{4} &  \frac{1}{2k} &  \ldots  & \frac{1}{2k} & \ldots &  \frac{1}{4k} \\
   \frac{1}{4k}  & \frac{1}{4} & -\frac{1}{4}& - \frac{1}{2k}  &\ldots &  \frac{(-1)^{\ell} }{2k}& \ldots &   \frac{(-1)^{ k  }}{4k}   \\
   \frac{1}{4k}  & -\frac{1}{4} & \frac{1}{4} & - \frac{1}{2k} & \ldots  & \frac{(-1)^{\ell}}{2k} & \ldots &\frac{(-1)^{k} }{4k}   \\ 
   \frac{1}{2k}   & 0 & 0 &  \frac{2}{2k} \cos\left( \frac{2\pi}{2k} \right)  &\ldots &  \frac{2}{2k}\cos\left( \frac{2\pi h}{2k}  \right) & \ldots & -\frac{1}{2k}   \\ 
   \vdots &  \vdots &\vdots & \vdots  & \ddots &\vdots   & \ddots & \vdots\\
   \frac{1}{2k}   & 0 &0 & \frac{2}{2k}\cos\left(  \frac{2\pi h}{2k}  \right)  &\ldots &  \frac{2}{2k}\cos\left(  \frac{2\pi h\ell}{2k}   \right) & \ldots & \frac{(-1)^h }{2k} \\
   \vdots  &   \vdots &\vdots  & \vdots & \ddots & \vdots & \ddots & \vdots \\
   \frac{1}{2k} & 0 & 0 &  \frac{2}{2k}\cos\left( \frac{2\pi  \left(\frac{2k}{2}-1\right)}{2k}\right)  
 & \ldots & \frac{2}{2k}\cos\left(\frac{2\pi \ell \left(\frac{2k}{2}-1\right)}{2k}\right)  &\ldots   & -\frac{(-1)^{k}}{2k} \\ 
\end{pmatrix}
\begin{pmatrix}
2n  \\
 0     \\
 0   \\
 0 \\ 
 0 \\ 
 0 \\ 
\vdots  \\
0 \\ 
-2n \
\end{pmatrix}    
=\begin{pmatrix}
0 \\
0  \\
\frac{(1-(-1)^k)n}{2k}   \\
\frac{(1-(-1)^k)n}{2k}  \\
\frac{2n}{k}   \\ 
0 \\ 
\vdots  \\ 
\frac{(1-(-1)^h)n}{k} \\ 
\vdots \\
\frac{(1+(-1)^k)n}{k} 
\end{pmatrix}.   
\end{align*}
% $$
% \lfloor\frac{3(n+1)}{2}\rfloor = \frac{3n}{2}+1
% $$
\normalsize

In the case where $a_1=c^{2n}$, $l=(2n)/k$ is even but $k$ is odd, the multiplicities of irreducible representations are given by 
 \begin{align*}
&\begin{pmatrix}n _1 \\
n_2 \\ 
n_3 \\
n_4 \\
m_1\\
\vdots \\
m_h\\
\vdots \\
m_{n/2-1}\end{pmatrix}
=M^{-1}\begin{pmatrix}
2n  \\
-\frac{1-(-1)^n}{2}-\displaystyle{\sum_{i=n+3}^{2n}}
c^{n+3-2i}
  P_{i-n-3,-i}  \\
\frac{1-(-1)^n}{2}-\displaystyle{\sum_{i=n+3}^{2n}}
\xi^{3-2i}
c^{n+3-2i}
  P_{i-n-3,-i}  \\ 
  0 \\ 
  \vdots \\
  0 \\
-2n 
\end{pmatrix}  \\
&= 
\tiny 
\begin{pmatrix}
   \frac{1}{4k}  & \frac{1}{4} &  \frac{1}{4}  & \frac{1}{2k} &    \ldots & \frac{1}{2k}& \ldots &  \frac{1}{4k} \\
   \frac{1}{4k}   &  -\frac{1}{4}&- \frac{1}{4} &  \frac{1}{2k} &  \ldots  & \frac{1}{2k} & \ldots &  \frac{1}{4k} \\
   \frac{1}{4k}  & \frac{1}{4} & -\frac{1}{4}& - \frac{1}{2k}  &\ldots &  \frac{(-1)^{\ell} }{2k}& \ldots &   \frac{(-1)^{ k  }}{4k}   \\
   \frac{1}{4k}  & -\frac{1}{4} & \frac{1}{4} & - \frac{1}{2k} & \ldots  & \frac{(-1)^{\ell}}{2k} & \ldots &\frac{(-1)^{k} }{4k}   \\ 
   \frac{1}{2k}   & 0 & 0 &  \frac{2}{2k} \cos\left( \frac{2\pi}{2k} \right)  &\ldots &  \frac{2}{2k}\cos\left( \frac{2\pi h}{2k}  \right) & \ldots & -\frac{1}{2k}   \\ 
   \vdots &  \vdots &\vdots & \vdots  & \ddots &\vdots   & \ddots & \vdots\\
   \frac{1}{2k}   & 0 &0 & \frac{2}{2k}\cos\left(  \frac{2\pi h}{2k}  \right)  &\ldots &  \frac{2}{2k}\cos\left(  \frac{2\pi h\ell}{2k}   \right) & \ldots & \frac{(-1)^h }{2k} \\
   \vdots  &   \vdots &\vdots  & \vdots & \ddots & \vdots & \ddots & \vdots \\
   \frac{1}{2k} & 0 & 0 &  \frac{2}{2k}\cos\left( \frac{2\pi  \left(\frac{2k}{2}-1\right)}{2k}\right)  
 & \ldots & \frac{2}{2k}\cos\left(\frac{2\pi \ell \left(\frac{2k}{2}-1\right)}{2k}\right)  &\ldots   & \frac{(-1)^{k+1}}{2k} \\ 
\end{pmatrix}
\normalsize 
\begin{pmatrix}
2n  \\
-\frac{1-(-1)^n}{2}-\displaystyle{\sum_{i=n+3}^{2n}}
c^{n+3-2i}
  P_{i-n-3,-i}  \\
\frac{1-(-1)^n}{2}-\displaystyle{\sum_{i=n+3}^{2n}}
\xi^{3-2i}
c^{n+3-2i}
  P_{i-n-3,-i}  \\ 
  0 \\ 
\vdots  \\
0\\
-2n
\end{pmatrix} \\ 
&=
\normalsize 
\begin{pmatrix}
  - \frac{1}{4}   \displaystyle{\sum_{i=n+3}^{2n}}
c^{n+3-2i}
  P_{i-n-3,-i} - \frac{1}{4}\displaystyle{\sum_{i=n+3}^{2n}}
\xi^{3-2i}
c^{n+3-2i}
  P_{i-n-3,-i} \\
  \frac{1}{4} \displaystyle{\sum_{i=n+3}^{2n}}
c^{n+3-2i}
  P_{i-n-3,-i} + \frac{1}{4}\displaystyle{\sum_{i=n+3}^{2n}}
\xi^{3-2i}
c^{n+3-2i}
  P_{i-n-3,-i} \\
\frac{(1-(-1)^k)n}{2k}-\frac{1-(-1)^n}{4} - \frac{1}{4} \displaystyle{\sum_{i=n+3}^{2n}}
c^{n+3-2i}
  P_{i-n-3,-i} + \frac{1}{4}\displaystyle{\sum_{i=n+3}^{2n}}
\xi^{3-2i}
c^{n+3-2i}
  P_{i-n-3,-i}  \\
\frac{(1-(-1)^k)n}{2k}+\frac{1-(-1)^n}{4}+ \frac{1}{4} \displaystyle{\sum_{i=n+3}^{2n}}
c^{n+3-2i}
  P_{i-n-3,-i} - \frac{1}{4}\displaystyle{\sum_{i=n+3}^{2n}}
\xi^{3-2i}
c^{n+3-2i}
  P_{i-n-3,-i} \\
\frac{2n}{k}\\ 
\vdots  \\ 
\frac{(1-(-1)^h)n}{k} \\ 
\vdots \\
\frac{(1+(-1)^k)n}{k} 
\end{pmatrix}.   
\end{align*}
% $$
% \lfloor\frac{3(n+1)}{2}\rfloor = \frac{3n}{2}+1
% $$
\normalsize 
Observe now that 
\begin{align*}
0\leq n_1&=   - \frac{1}{4}   \displaystyle{\sum_{i=n+3}^{2n}}
c^{n+3-2i}
  P_{i-n-3,-i} - \frac{1}{4}\displaystyle{\sum_{i=n+3}^{2n}}
\xi^{3-2i}
c^{n+3-2i}
  P_{i-n-3,-i}\\
  0 \leq n_2&=  \frac{1}{4} \displaystyle{\sum_{i=n+3}^{2n}}
c^{n+3-2i}
  P_{i-n-3,-i} + \frac{1}{4}\displaystyle{\sum_{i=n+3}^{2n}}
\xi^{3-2i}
c^{n+3-2i}
  P_{i-n-3,-i}
\end{align*}
so that 
$$
   \displaystyle{\sum_{i=n+3}^{2n}}
c^{n+3-2i}
  P_{i-n-3,-i} =- \displaystyle{\sum_{i=n+3}^{2n}}
\xi^{3-2i}
c^{n+3-2i}
  P_{i-n-3,-i}.
$$

\end{proof}

 We will now prove Corollary~\ref{cor:honest-2-dimensional-irreps}.

 \begin{proof}
% Change of basis  
Let $n$ be even. 
We change the basis to   
\[ \overline{\omega_i}= c^{-\frac{n+3-2i}{2}}\zeta^{-\frac{i}{2}}\omega_i\mbox{ for } 
1\leq i\leq n+2  
\] 
to obtain that we indeed have $2$-dimensional irreducible representations. 
Since 
\[ 
\overline{\omega_{n+3-i}} 
= c^{\frac{n+3-2i}{2}}\zeta^{-\frac{n+3-i}{2}} 
\omega_{n+3-i} \mbox{ for } 1\leq i\leq n+2, 
\] 
we have 
\begin{enumerate}[label=(\roman*)]
\item 
$\phi_{\zeta}^+(\overline{\omega_i}) = c^{-\frac{n+3-2i}{2}}\zeta^{-\frac{2i}{4}}\zeta^{\frac{2n+3-2i}{2}}\omega_i 
= \zeta^{\frac{2n+3-2i}{2}}\overline{\omega_i}$, 
\item 
$\phi_{\zeta}^+(\overline{\omega_{n+3-i}}) 
= c^{\frac{n+3-2i}{2}}\zeta^{-\frac{n+3-i}{2}}\zeta^{\frac{-3+2i}{4}}\omega_{n+3-i}
= \zeta^{-\frac{2n+3-2i}{2}}\overline{\omega_{n+3-i}}
$, 
\item $\psi_{c}^+(\overline{\omega_i}) 
=-c^{-\frac{n+3-2i}{2}}\zeta^{-\frac{2i}{4}} c^{n+3-2i}\omega_{n+3-i}
=\zeta^{\frac{2n+3-2i}{2}}\overline{\omega_{n+3-i}}$, 
\item $\psi_{c}^+(\overline{\omega_{n+3-i}}) =-
c^{\frac{n+3-2i}{2}}\zeta^{-\frac{n+3-i}{2}}
c^{-\frac{n+3-2i}{2}}\omega_{i} 
= \zeta^{-\frac{2n+3-2i}{2}}\overline{\omega_i}
$. 
\end{enumerate}
%  It follows that $\overline{\omega_{n+3-i}}=c^{-\frac{3-2i+n}{4}}\zeta^{-\frac{3-2i+n}{4}}\omega_{n+3-i}$. 
With respect to the basis $\{ \overline{\omega_1},\ldots, \overline{\omega_{n+2}}\}$, 
this implies 
\[ 
\phi_{\zeta}^+\Big|_{\{ \overline{\omega_i},\overline{\omega_{n+3-i}}
\}} = 
\begin{pmatrix}
\zeta^{\frac{2n+3-2i}{2}} & 0 \\ 
0 & \zeta^{-\frac{2n+3-2i}{2}} \\ 
\end{pmatrix}
\] 
and 
\[
\psi_c^+ \Big|_{\{ \overline{\omega_i},\overline{\omega_{n+3-i}}
\}} = 
\begin{pmatrix}
0 & \zeta^{-\frac{2n+3-2i}{2}}\\ 
\zeta^{\frac{2n+3-2i}{2}} & 0 \\ 
\end{pmatrix}, 
\] 
which coincide with classical $2$-dimensional irreducible representations for dihedral groups.

Now, let $n$ be odd. 
With respect to the basis 
\[ 
\overline{\omega_i} = c^{-\frac{n+3-2i}{4}}\zeta^{-\frac{i}{2}}\omega_i \mbox{ for } 1\leq i\leq n+2, 
\] 
we have 
\[ 
\phi_{\zeta}^+\Big|_{\{ \overline{\omega_i},\overline{\omega_{n+3-i}}
\}} = 
\begin{pmatrix}
\zeta^{\frac{2n+3-2i}{2}} & 0 \\ 
0 & \zeta^{-\frac{2n+3-2i}{2}} \\ 
\end{pmatrix}
\] 
and 
\[
\psi_c^+ \Big|_{\{ \overline{\omega_i},\overline{\omega_{n+3-i}}
\}} = 
\begin{pmatrix}
0 & \zeta^{-\frac{2n+3-2i}{2}}\\ 
\zeta^{\frac{2n+3-2i}{2}} & 0 \\ 
\end{pmatrix}, 
\]  
and we note that 
 \begin{equation}
\psi_c^+(\overline{\omega_{\frac{n+3}{2}}})   
= -\overline{\omega_{\frac{n+3}{2}}}  
\hspace{4mm} 
	\mbox{ and } 
\hspace{4mm}
 \phi_{\zeta}^+(\overline{\omega_{\frac{n+3}{2}}}) 
 = \zeta^{n/2} \overline{\omega_{\frac{n+3}{2}}}
 = - \overline{\omega_{\frac{n+3}{2}}}.  
 \end{equation}

 \end{proof}

\begin{example}
In the case when $n=3$ and $k=3$ for $p(t)=t (t^3 -\alpha_1^3) (t^3 - \alpha_2^3)$,

\begin{align*}
\psi_{c}^{+}  = 
\begin{pmatrix}
0 & 0& 0& 0& -\frac{1}{c^2} & 0 \\ 
0& 0& 0& -\frac{1}{c} & 0  & 0 \\ 
0& 0& -1 & 0 & 0 & 0 \\ 
0& -c& 0 & 0 & 0 & 0 \\ 
-c^2& 0& 0 & 0 & 0 & 0 \\ 
0& 0& 0 & 0 & 0 & -\frac{\alpha_1^3\alpha_2^3}{c^6} \\ 
\end{pmatrix}.
\end{align*}
In this case $\displaystyle{\alpha_1^3\alpha_2^3=c^6}$, the trace of $\psi_{c}^{+}$ equals $-2$, giving us multiplicities 
\begin{align*}
n_1 = n_2=n_3=0, n_4 = 2, m_1 = 2. 
\end{align*}
%but if $\displaystyle{\alpha_1^3\alpha_2^3=-c^3}$, then  the trace of $\psi_{c}^{+}$ equals $0$, giving us multiplicities  
%\begin{align*}
%n_1 = 1, n_2 = 1, m_1 = 2. 
%\end{align*}
\end{example}

\begin{example}
For $n=6$ and $k=3$, we have 
\begin{align*}
p(t)&=t(t^3-\alpha_1^3)(t^3-\alpha_2^3)(t^3-\alpha_3^3)(t^3-\alpha_4^3)  \\
&=t^{13}-\left(\alpha _1^3+\alpha _2^3+\alpha _3^3+\alpha _4^3\right) t^{10}
+ \left(
      \alpha _1^3 \alpha _2^3
   + \alpha _1^3 \alpha _3^3 
   + \alpha _1^3 \alpha _4^3       
   + \alpha _2^3 \alpha _3^3
   + \alpha _2^3 \alpha _4^3
   + \alpha _3^3 \alpha _4^3 
   \right) t^7 \\
   &\quad -\left(
   \alpha _1^3 \alpha _2^3 \alpha _3^3
   + \alpha _1^3 \alpha _2^3 \alpha _4^3   
   + \alpha_1^3 \alpha _3^3 \alpha _4^3 
   + \alpha _2^3 \alpha _3^3 \alpha _4^3 
   \right)   t^4
   +\alpha _1^3 \alpha _2^3 \alpha_3^3 \alpha _4^3 t
\end{align*}
Then $\psi_c^+$ is 
\[ 
\left(
\begin{array}{cccccccccccc}
 0 & 0 & 0 & 0 & 0 & 0 & 0 & -\frac{1}{c^{7}} & 0 & 0 & \Lambda_3 & 0 \\
 0 & 0 & 0 & 0 & 0 & 0 & -\frac{1}{c^{5}} & 0 & 0 & \Lambda_2 & 0 & 0 \\
 0 & 0 & 0 & 0 & 0 & -\frac{1}{c^{3}} & 0 & 0 & \Lambda_1 & 0 & 0 & \Theta_4 \\
 0 & 0 & 0 & 0 & -\frac{1}{c} & 0 & 0 & 0 & 0 & 0 & \Theta_3 & 0 \\
 0 & 0 & 0 & -c & 0 & 0 & 0 & 0 & 0 & \Theta_2 & 0 & 0 \\
 0 & 0 & -c^{3} & 0 & 0 & 0 & 0 & 0 & \Theta_1 & 0 & 0 &\Delta_4\\
 0 & -c^{5} & 0 & 0 & 0 & 0 & 0 & 0 & 0 & 0 & \Delta_3 & 0 \\
 -c^{7} & 0 & 0 & 0 & 0 & 0 & 0 & 0 & 0 &\Delta_2 & 0 & 0 \\
 0 & 0 & 0 & 0 & 0 & 0 & 0 & 0 & \Delta_1 & 0 & 0 & \Gamma_4\\
 0 & 0 & 0 & 0 & 0 & 0 & 0 & 0 & 0 & 0 & \Gamma_3 & 0 \\
 0 & 0 & 0 & 0 & 0 & 0 & 0 & 0 & 0 & \Gamma_2 & 0 & 0 \\
 0 & 0 & 0 & 0 & 0 & 0 & 0 & 0 & \Gamma_1 & 0 & 0 & \Psi \\
\end{array}
\right),
\] 
where 
\begin{align*}
\Lambda_1 &= -\frac{2 \left(\alpha _1^3+\alpha
   _2^3+\alpha _3^3+\alpha _4^3\right)}{5 c^{9}}=\frac{2 a_{10}}{5 c^{9}}, \\ 
\Lambda_2 &= -\frac{8 \left(\alpha _1^3+\alpha
   _2^3+\alpha _3^3+\alpha _4^3\right)}{17 c^{11}}= \frac{8 a_{10}}{17 c^{11}}, \\ 
\Lambda_3 &= -\frac{10 \left(\alpha _1^3+\alpha
   _2^3+\alpha _3^3+\alpha _4^3\right)}{19 c^{13}}= \frac{10a_{10}}{19 c^{13}}, \\ 
\Theta_1 &= -\frac{   
     \alpha _1^3  \alpha _2^3 + \alpha _1^3  \alpha _3^3 + \alpha _1^3  \alpha_4^3  
   +   \alpha _2^3 \alpha _3^3+  \alpha _2^3  \alpha_4^3   
   + \alpha _3^3 \alpha _4^3 
   }{5 c^{9}}= -\frac{   a_7
   }{5 c^{9}}, \\ 
\Theta_2 &=  -\frac{ 
  \alpha _1^3  \alpha _2^3 + \alpha _1^3  \alpha _3^3+\alpha _1^3  \alpha_4^3 
    +  \alpha _2^3 \alpha _3^3+\alpha _2^3 \alpha_4^3 
       + \alpha _3^3 \alpha _4^3
   }{17 c^{11}}=-\frac{ a_7}{17 c^{11}}, \\
\Theta_3 &= \frac{
\alpha _1^3  \alpha _2^3 + \alpha _1^3  \alpha _3^3+\alpha _1^3  \alpha_4^3 
    +  \alpha _2^3 \alpha _3^3+\alpha _2^3 \alpha_4^3 
       + \alpha _3^3 \alpha _4^3    
   }{19 c^{13}}= \frac{a_7}{19 c^{13}},  \\ 
   \Theta_4 &= -\frac{
   8 (\alpha _1^6+  \alpha _2^6+ \alpha_3^6+ \alpha _4^6) 
   +
  11  \left( \alpha _1^3\alpha _2^3 + \alpha _1^3\alpha _3^3 + \alpha _1^3 \alpha _4^3  
 + \alpha _2^3 \alpha _3^3 + \alpha _2^3 \alpha_4^3
  + \alpha _3^3 \alpha _4^3 \right) 
   }{
   35 c^{15}
   } \\
   &= -\frac{
   8 (a_{10}^2-2a_7) +11 a_7 }{35 c^{15} }= -\frac{
   8 a_{10}^2-5 a_7 }{35 c^{15} }, \\ 
   \Delta_1 &=  \frac{4 \left(  \alpha _1^3\alpha _2^3\alpha _3^3+ \alpha _1^3\alpha _2^3\alpha _4^3+ \alpha _1^3\alpha _3^3 \alpha _4^3+\alpha _2^3 \alpha _3^3 \alpha
   _4^3\right)}{5 c^{9}}= - \frac{4a_4}{5 c^{9}},  \\ 
   \Delta_2 &=  \frac{
   10 \left( \alpha _1^3  \alpha _2^3  \alpha _3^3+ \alpha _1^3  \alpha _2^3 \alpha_4^3 
   +\alpha _1^3   \alpha _3^3 \alpha _4^3  
   + \alpha _2^3 \alpha_3^3 \alpha _4^3\right) }{17 c^{11}}= - \frac{10a_4}{17 c^{11}}, \\ 
   \Delta_3 &= \frac{8 \left(   \alpha _1^3 \alpha _2^3  \alpha _3^3 +  \alpha _1^3 \alpha _2^3  \alpha_4^3 
   +  \alpha _1^3 \alpha _3^3 \alpha _4^3  + \alpha _2^3 \alpha
   _3^3 \alpha _4^3\right)}{19 c^{13}}=- \frac{8 a_4}{19 c^{13}}, \\ 
   \Delta_4 &=  
   -\frac{2}{35 c^{15}} \left( \alpha _1^3 \alpha _2^3 \alpha _3^3
    + \alpha _1^3 \alpha _2^3 \alpha _4^3   
    + \alpha _1^3 \alpha _3^3 \alpha _4^3 
    + \alpha _2^3 \alpha _3^3 \alpha _4^3  \right.\\
  &\hskip 10pt\left.
   +2
   \left(\alpha _1^6 (\alpha _2^3 +\alpha _3^3  +\alpha _4^3  )  
   +\alpha _2^6 (\alpha_1^3+ \alpha _3^3 +\alpha _4^3 )
   +\alpha _3^6 (\alpha_1^3+ \alpha _2^3  + \alpha_4^3)
   +\alpha _4^6 (\alpha_1^3+ \alpha _2^3  +\alpha _3^3  )     
   \right) \right) \\ 
   &=  
   -\frac{2}{35 c^{15}} \left( 5a_4-2a_7a_{10}   \right), \\
   \Gamma_1 &= -\frac{7 \alpha _1^3 \alpha _2^3 \alpha _3^3 \alpha _4^3}{5
   c^{9}}, \\ 
   \Gamma_2 &=  -\frac{19 \alpha _1^3 \alpha _2^3 \alpha _3^3 \alpha
   _4^3}{17 c^{11}}, \\ 
   \Gamma_3 &=  -\frac{17 \alpha _1^3 \alpha _2^3 \alpha _3^3
   \alpha _4^3}{19 c^{13}}, \\ 
   \Gamma_4 &=  
   \frac{1}{35 c^{15}} 
   \left(
16 
\left( 
 \alpha _1^6 ( \alpha _2^3 \alpha _3^3 +\alpha _2^3 \alpha _4^3  +  \alpha_3^3 \alpha _4^3   )      
   +  \alpha _2^6 ( \alpha _1^3 \alpha_3^3  +\alpha _1^3  \alpha _4^3+\alpha _3^3  \alpha _4^3  )
    \right. \right.\\
  &\hskip 10pt\left. \left. 
  +  \alpha _3^6 (\alpha _1^3 \alpha _2^3 + \alpha _1^3 \alpha _4^3 +  \alpha _2^3  \alpha _4^3 )
 + \alpha _4^6  (\alpha _1^3   \alpha _2^3  +  \alpha _1^3 \alpha _3^3  +  \alpha _2^3 \alpha_3^3    )   
   \right) 
   + 39  \alpha _1^3  \alpha _2^3 \alpha _3^3 \alpha _4^3   
   \right)  \\ 
   &=  
   \frac{1}{35 c^{15}} 
   \left(16 a_4 a_{10}-25 a_1\right),  \\ 
   \Psi &= -\frac{4 \alpha _1^3 \alpha _2^3 \alpha _3^3 \alpha _4^3 \left(\alpha
   _1^3+\alpha _2^3+\alpha _3^3+\alpha _4^3\right)}{5 c^{15}}= -\frac{4   \left(\alpha
   _1^3+\alpha _2^3+\alpha _3^3+\alpha _4^3\right)}{5 c^{3}}= \frac{4   a_{10}}{5 c^{3}}. \\ 
\end{align*}
By \eqnref{coefficientsymmetry} we have
$a_{10}=c^{-6}a_4$
so that 
\begin{align*}
\tr \psi_c^+ &= \Delta_1+\Psi = -\frac{4 a_4}{5 c^{9}}  +\frac{4 a_{10}}{5 c^{3}} =0.   \end{align*}
   This implies that the multiplicities appearing are 
   \[ 
   n_1 = n_2 = 0,\quad n_3 =   n_4 = 2 \mbox{ and } m_1 =4,\quad m_2 = 0. 
   \]

\end{example}
 
\begin{example} When $n=9$ and $k=3$, we used Mathematica to get 
$$
\tr(\psi_c^+)=-2=-\tr(\psi_c^+\phi_\xi^+), 
$$
and hence
$$
n_1=n_2=0,n_3=2,n_4=4,m_1=6,m_2=0.
$$

\end{example}

 %\bibliography{math} 

\def\cprime{$'$} \def\cprime{$'$} \def\cprime{$'$} \def\cprime{$'$}
\providecommand{\bysame}{\leavevmode\hbox to3em{\hrulefill}\thinspace}
\providecommand{\MR}{\relax\ifhmode\unskip\space\fi MR }
% \MRhref is called by the amsart/book/proc definition of \MR.
\providecommand{\MRhref}[2]{%
  \href{http://www.ams.org/mathscinet-getitem?mr=#1}{#2}
}
\providecommand{\href}[2]{#2}

\appendix

\end{document}

\bibitem[Jor86]{MR829385}
D.~A. Jordan.
\newblock On the ideals of a {L}ie algebra of derivations.
\newblock {\em J. London Math. Soc. (2)}, 33(1):33--39, 1986.

\bibitem[Skr88]{MR966871}
S.~M. Skryabin.
\newblock Regular {L}ie rings of derivations.
\newblock {\em Vestnik Moskov. Univ. Ser. I Mat. Mekh.}, (3):59--62, 1988.

\bibitem[Skr04]{MR2035385}
S. M. Skryabin, Degree one cohomology for the {L}ie algebras of
  derivations. {\it Lobachevskii J. Math.}, 14(2004), 69--107 (electronic).

@article {MR3631928,
    AUTHOR = {Cox, Ben and Guo, Xiangqian and Lu, Rencai and Zhao, Kaiming},
     TITLE = {Simple superelliptic {L}ie algebras},
   JOURNAL = {Commun. Contemp. Math.},
  FJOURNAL = {Communications in Contemporary Mathematics},
    VOLUME = {19},
      YEAR = {2017},
    NUMBER = {3},
     PAGES = {1650032, 22},
      ISSN = {0219-1997},
   MRCLASS = {17B65 (14H55 17B40)},
  MRNUMBER = {3631928},
       DOI = {10.1142/S0219199716500322},
       URL = {http://dx.doi.org/10.1142/S0219199716500322},
}

@inproceedings {MR2035219,
    AUTHOR = {Shaska, Tanush},
     TITLE = {Determining the automorphism group of a hyperelliptic curve},
 BOOKTITLE = {Proceedings of the 2003 {I}nternational {S}ymposium on
              {S}ymbolic and {A}lgebraic {C}omputation},
     PAGES = {248--254},
 PUBLISHER = {ACM, New York},
      YEAR = {2003},
   MRCLASS = {14H37 (14Q05)},
  MRNUMBER = {2035219},
MRREVIEWER = {Sadok Kallel},
       DOI = {10.1145/860854.860904},
       URL = {http://dx.doi.org/10.1145/860854.860904},
}
	
@article {MR1223022,
    AUTHOR = {Bujalance, E. and Gamboa, J. M. and Gromadzki, G.},
     TITLE = {The full automorphism groups of hyperelliptic {R}iemann
              surfaces},
   JOURNAL = {Manuscripta Math.},
  FJOURNAL = {Manuscripta Mathematica},
    VOLUME = {79},
      YEAR = {1993},
    NUMBER = {3-4},
     PAGES = {267--282},
      ISSN = {0025-2611},
   MRCLASS = {20H10 (30F10)},
  MRNUMBER = {1223022},
MRREVIEWER = {S. Allen Broughton},
       DOI = {10.1007/BF02568345},
       URL = {http://dx.doi.org/10.1007/BF02568345},
}